\newcommand{\ubox}{\overline{\dim}_{\mathrm{B}}}
\newcommand{\Haus}{\dim_{\mathrm{H}}}
\newtheorem*{thm*}{Theorem}
\newtheorem*{conj*}{Conjecture}
\newtheorem*{ques*}{Question}
\newtheorem*{rem*}{Remark}
\newtheorem*{defn*}{Definition}
\newtheorem*{mainques*}{Main questions}
\newtheorem{thmx}{Theorem}
\newtheorem{conjx}{Conjecture}
\newtheorem{remx}{Remark}
\newtheorem{thm}{Theorem}[section]
\newtheorem{lma}[thm]{Lemma}
\newtheorem{defn}[thm]{Definition}
\newtheorem{conj}[thm]{Conjecture}
\newtheorem{rem}[thm]{Remark}
\newtheorem{exm}[thm]{Example}
\begin{document}
\title{Rational points near self-similar sets}
% \title[short text for running head]{full title}
%    Only \author and \address are required; other information is
%    optional.  Remove any unused author tags.
%    author one information
% \author[short version for running head]{name for top of paper}

\author{Han Yu}
\address{Han Yu\\
	Department of Pure Mathematics and Mathematical Statistics\\University of Cambridge\\CB3 0WB \\ UK }
\curraddr{}
\email{hy351@maths.cam.ac.uk}
\thanks{}

%    \subjclass is required.
\subjclass[2010]{Primary:  11A63, 11Z05, 11K55, 28A80}

\keywords{}

\maketitle

\begin{abstract}
In this paper, we consider a problem of counting rational points near self-similar sets. Let $n\geq 1$ be an integer. We shall show that for some self-similar measures on $\mathbb{R}^n$, the set of rational points $\mathbb{Q}^n$ is 'equidistributed' in a sense that will be introduced in this paper. This implies that an inhomogeneous Khinchine convergence type result can be proved for those measures. In particular, for $n=1$ and large enough integers $p,$ the above holds for the middle-$p$th Cantor measure, i.e. the natural Hausdorff measure on the set of numbers whose base $p$ expansions do not have digit $[(p-1)/2].$ Furthermore, we partially proved a conjecture of Bugeaud and Durand for the middle-$p$th Cantor set and this also answers a question posed by Levesley, Salp and Velani. Our method includes a fine analysis of the Fourier coefficients of self-similar measures together with an Erd\H{o}s-Kahane type argument. We will also provide a numerical argument to show that $p>10^7$ is sufficient for the above conclusions. In fact, $p\geq 15$ is already enough for most of the above conclusions.
\end{abstract}

\maketitle
%\tableofcontents
\allowdisplaybreaks

\section{Introduction}
\subsection{Well approximable numbers with missing digits--A question of Mahler}\label{M}
Results in this paper are  closely related to a problem asked by Mahler, see \cite{Mahler84}.
\begin{ques*}[Mahler's problem]
	How close can irrational elements of Cantor’s set be approximated by rational numbers
	
	(i) in Cantor’s set, and
	
	(ii) by rational numbers not in Cantor’s set?
\end{ques*}
Here, Cantor's set can be any missing digits set, for example, the middle third Cantor set, see below and Section \ref{Missing Digits Measures}. We concentrate on the question (ii) in this paper. In order to provide more precise information, we first introduce the notion of well approximable numbers.
\begin{defn*}
Let $n\geq 1$ be an integer. Let $\gamma\in [0,1]^n.$ Let $\psi:\mathbb{N}\to (0,1]$ be a sequence of real numbers (approximation function). We define the set of $\gamma$ shifted $\psi$-well approximable numbers to be
\[
W(\psi,\gamma)=\{x\in\mathbb{R}^n:  \|qx-\gamma\|\leq \psi(q) \text{ infinitely often}\},
\]
where $\|a\|$ denotes the distance between $a\in\mathbb{R}^n$ and the integer lattice $\mathbb{Z}^n.$ 

When $\gamma$ is the zero vector, we simply write $W(\psi)=W(\psi,\mathbf{0}).$ Let $\nu>1, n=1$ we call the set $W_\nu=W(\psi)$ with $\psi:q\to q^{-\nu}$ to be the set of $\nu$-well approximable numbers. We also call $W_{>1}=\cup_{\nu>1}W_\nu$ to be the set of very well approximable numbers.
\end{defn*}
We briefly introduce the notion of missing digits set and measures. We will provide more information in Section \ref{Missing Digits Measures}.
\begin{defn*}
	Let $p>2$ be an integer. Let $D\subset \{0,\dots p-1\}$ be a set with at least two elements. Let $\mu_{p,D}$ be the distribution of the following random number
	\[
	\sum_{j\geq 1} a_j p^{-j},
	\]
	where $a_j,j\geq 1$ are i.i.d random variables taking each element in $D$ with equal probability. We say that $K_{p,D}=supp(\mu_{p,D})$ is a ($p$-adic) missing digits set and $\mu_{p,D}$ is a ($p$-adic) missing digits measure which is the Cantor-Lebesgue measure of $K_{p,D}.$ For example, $K_{3,\{0,2\}}$ is the middle-third Cantor set.
\end{defn*}

Let $K$ be a missing digits set. Mahler's question (ii) is thus concerning the set $K\cap W(\psi,\gamma).$ In particular, we have the following two long-standing questions. 
\begin{mainques*}\label{Question}
	Let $K$ be a missing digits set with the corresponding Cantor-Lebesgue measure $\mu.$ 
	
	(1)  Let $\psi$ be a non-increasing monotonic approximation function. What can we say about $\mu(W(\psi))$?
	
	(2) What can we say about $\Haus K\cap W_{>1}$? Furthermore, let $\nu>1.$ What can we say about $\Haus K\cap W_\nu$? 
\end{mainques*}

Both (1),(2) have attracted a great amount of attention. Before we list some known results, we first mention the following guiding conjectures which provide hypothetical answers to the above questions.

\begin{conjx}[Levesley, Salp and Velani \cite{LSV ref}]\label{LSV}
	Let $K$ be a missing digits set.  We have $\Haus K\cap W_{>1}=\Haus K.$
\end{conjx}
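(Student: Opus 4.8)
The plan is to derive Conjecture~\ref{LSV} from a single quantitative input — that the rationals $\mathbb{Q}$ sit near $K$ with almost the density and distribution one naively expects — and to establish that input by a Fourier analysis of $\mu=\mu_{p,D}$. Write $s=\Haus K=\log|D|/\log p$; recall $\mu$ is Ahlfors $s$-regular, so $\mu(B(x,r))\asymp r^{s}$ for $x\in K$, $0<r\le 1$. Since $K\cap W_{>1}=\bigcup_{\nu>1}(K\cap W_\nu)$ and $\Haus(K\cap W_{>1})\le\Haus K$ is automatic, it is enough to show
\[
\Haus(K\cap W_\nu)\ \longrightarrow\ s\qquad\text{as }\nu\to 1^{+}.
\]
Fix $\nu>1$ close to $1$. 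Since $x\in W_\nu$ means $|x-p/q|\le q^{-\nu-1}$ for infinitely many $p/q$, set $\Lambda_\nu=\limsup_{q\to\infty}\bigcup_{0\le p\le q}B(p/q,\,q^{-\nu-1})$, so that $K\cap\Lambda_\nu=K\cap W_\nu$, and the task is to bound $\Haus(K\cap\Lambda_\nu)$ from below by a quantity tending to $s$.

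I would first pass to a zero--one statement via the Mass Transference Principle of Beresnevich and Velani, in the form valid on an Ahlfors $s$-regular set: if for some $0<t<s$ the enlarged $\limsup$ set $\limsup_{q}\bigcup_{p}B\!\left(p/q,\,q^{-(\nu+1)t/s}\right)$ has full $\mu$-measure, then $\Haus(K\cap\Lambda_\nu)\ge t$ (indeed $\mathcal{H}^{t}(K\cap\Lambda_\nu)=\infty$). The centres $p/q$ need not lie in $K$; replacing each by a nearest point of $K$ costs only a bounded factor in the radii, which the principle absorbs. Thus everything reduces to: for each $t<s$ there is $\nu>1$ with $\mu\big(\limsup_{q}A_q\big)=1$, where $A_q=\bigcup_{p}B(p/q,\rho_q)$ and $\rho_q=q^{-(\nu+1)t/s}$.

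For this I would apply a divergence Borel--Cantelli lemma, which asks for two things. First, \emph{divergence of the measure sum}: the balls $B(p/q,\rho_q)$, $0\le p\le q$, are disjoint once $t$ is close to $s$ (as then $\rho_q\ll q^{-1}$), and $\mu(B(p/q,\rho_q))\asymp\rho_q^{s}$ when $p/q$ lies within $O(\rho_q)$ of $K$ while the ball is $\mu$-null otherwise, so $\mu(A_q)\asymp\mathcal{R}(q,\rho_q)\,\rho_q^{s}$ with $\mathcal{R}(q,\rho):=\#\{0\le p\le q:\ \operatorname{dist}(p/q,K)\le\rho\}$. The expected (and wanted) count is $\mathcal{R}(q,\rho)\asymp q\,\rho^{1-s}$, giving $\mu(A_q)\asymp q^{1-(\nu+1)t/s}$ and hence $\sum_q\mu(A_q)=\infty$ exactly when $t\le 2s/(\nu+1)$; since $2s/(\nu+1)\uparrow s$ as $\nu\downarrow 1$, every $t<s$ is attained. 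Second, \emph{quasi-independence}: $\sum_{q,q'\le Q}\mu(A_q\cap A_{q'})\ll\big(\sum_{q\le Q}\mu(A_q)\big)^{2}$. Both requirements are faces of one statement — that the rationals of denominator $\asymp q$ lying $O(\rho)$-close to $K$ are \emph{equidistributed} for $\mu$ with cardinality $\asymp q\rho^{1-s}$ — and it suffices to have this for a set of $q$ of positive lower density, discarding the rest. This is exactly the ``equidistribution of $\mathbb{Q}^{n}$'' advertised in the abstract.

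Establishing that counting/equidistribution statement is \textbf{the main obstacle}. One expands the (smoothed) counting function by Poisson summation, $\sum_{p}(\chi*\mu)(p/q)=q\sum_{h}\widehat{\chi}(hq)\,\widehat{\mu}(hq)$ for a bump $\chi\approx\mathbf{1}_{[-\rho,\rho]}$: the $h=0$ term is the main term $\asymp q\rho^{1-s}$, and the error is governed by $\sum_{h\ne 0}|\widehat{\chi}(hq)|\,|\widehat{\mu}(hq)|$. Since $\widehat{\mu}_{p,D}$ is an explicit Riesz-type product, it decays well \emph{unless} its argument, after stripping powers of $p$, carries a very special base-$p$ digit pattern; an Erd\H{o}s--Kahane type argument shows that for a fixed $q$ only $\lesssim q^{\theta}$ of the relevant $h$, with $\theta<1$, make $|\widehat{\mu}(hq)|$ large, so the error is $o(q\rho^{1-s})$ and the main term survives. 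The hard part is to do this uniformly in $q$: the estimate degenerates for arithmetically special $q$ (powers of $p$, or $q$ with structured digits), and a second layer of the same Erd\H{o}s--Kahane analysis is needed to confine such $q$ to a density-zero set. Securing $\theta<1$ with room, and tracking the constants, is precisely what forces $p$ large — the numerics then yield that $p>10^{7}$, and in fact $p\ge 15$, is enough — and it is also why the argument as sketched proves Conjecture~\ref{LSV} only for sufficiently rich $K$, such as the middle-$p$th Cantor set with $p$ large; when $D$ is small or highly structured the Fourier input is genuinely weaker, so the conjecture in full generality stays open.
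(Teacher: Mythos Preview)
There is a genuine gap in your application of the Mass Transference Principle. To run MTP on the Ahlfors $s$-regular space $(K,\mu)$ the balls must be centred in $K$; your device of replacing $p/q$ by the nearest point $y_{p,q}\in K$ is only available for those $(p,q)$ with $\operatorname{dist}(p/q,K)\le q^{-\nu-1}$, since otherwise the original ball $B(p/q,q^{-\nu-1})$ misses $K$ and must be discarded \emph{before} blowing up. The full-$\mu$-measure hypothesis you actually need is therefore for
\[
\limsup_{q}\ \bigcup_{p:\,\operatorname{dist}(p/q,K)\le q^{-\nu-1}} B\!\left(y_{p,q},\,q^{-(\nu+1)t/s}\right),
\]
not for $\limsup_{q}\bigcup_{p}B(p/q,\rho_q)$ over all $p$. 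These differ drastically: by the very counting you set up, the admissible $p$ for each $q$ number only $\asymp q^{(\nu+1)s-\nu}$, so the divergence threshold becomes $t\le s-(\nu-1)/(\nu+1)=2/(\nu+1)+s-1$, not $t\le 2s/(\nu+1)$. In fact your claimed bound $2s/(\nu+1)$ exceeds the upper bound $s-(\nu-1)/(\nu+1)$ that the paper \emph{proves} in Theorem~\ref{Jarnik1} (the difference is $(1-s)(\nu-1)/(\nu+1)>0$), so the argument as written contradicts an established inequality.

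Even with the corrected restricted collection, the required full-$\mu$-measure statement is not the one handled by the paper's weak divergence result, and the quasi-independence step you sketch is at least as hard as the direct route. The paper avoids MTP and divergence Borel--Cantelli entirely: it builds a Cantor subset of $K\cap W_\nu$ by iterating the \emph{local} counting estimate inside each surviving branch of $\mu$ (Theorem~\ref{Jarnik2}, Steps~2--4) and reads off the dimension via the mass distribution principle. The cost is that the local counting only becomes effective once $Q$ exceeds a large power of the branch scale, which is why Theorem~\ref{Jarnik2} yields only $\Haus(K\cap W_\nu)\ge s-o_{\nu\to1}(1)$; recovering the sharp value $s-(\nu-1)/(\nu+1)$ requires the stronger hypothesis $\dim_{l^1}\mu\cdot\Haus\mu>1/2$ and a separate Besicovitch-style covering argument (Theorem~\ref{Jarnik4}). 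Your identification of the Fourier/Erd\H{o}s--Kahane core and of the restriction to ``rich'' $K$ is on target; the flaw is purely in the MTP reduction.
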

\begin{conjx}[Bugeaud and Durand \cite{BD16}]\label{BD}
	Let $K$ be a missing digits set. Let $\nu>1.$ We have
	\[
	\Haus K\cap W_\nu=\max\left\{\frac{\Haus K}{\nu+1},\frac{2}{\nu+1}+\Haus K-1\right\}	.\]
\end{conjx}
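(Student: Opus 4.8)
We do not attempt Conjecture \ref{BD} in full; the plan is to establish it for the middle-$p$th Cantor set $K=K_{p,D}$ (so $D=\{0,\dots,p-1\}\setminus\{[(p-1)/2]\}$ and $s:=\Haus K=\log(p-1)/\log p$) when the integer $p$ is large, which along the way yields Conjecture \ref{LSV} for these sets. The upper bound $\Haus(K\cap W_\nu)\le\max\{s/(\nu+1),\,2/(\nu+1)+s-1\}$ is the soft half and is essentially known: cover $K\cap W_\nu$ by the balls $B(a/q,q^{-\nu-1})$ with $q\ge Q$, refine the natural cover of $K$ at scale $p^{-j}$ inside each such ball, and feed in the box-counting estimate $N(K,\delta)\asymp\delta^{-s}$ for the $\delta$-neighbourhood of $K$; this produces precisely the two competing exponents, the term $2/(\nu+1)+s-1$ governing the range of $q$ on which $q^{-\nu-1}$ is coarser than the $q$-periodic resolution of $K$ and $s/(\nu+1)$ the complementary range. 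This part does not see the size of $p$.

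For the lower bound one shows that $\Haus(K\cap W_\nu)$ is at least each of the two exponents separately. The bound $\ge s/(\nu+1)$ is again elementary and $p$-independent: thin $K$ so that every surviving point lies within $p^{-m(\nu+1)}$ of a rational $a/p^{m}$ for infinitely many $m$ in a carefully chosen sparse set (note $0\in D$, so these rationals lie in $K$), and balance the digit count to read off the exponent. The substantive content is the bound $\ge 2/(\nu+1)+s-1$, which is the heart of the argument: one must show that the rationals are \emph{equidistributed near $K$ with respect to $\mu=\mu_{p,D}$}. Concretely, write $\widehat{\mu}(\xi)=\int e^{-2\pi i\xi x}\,d\mu(x)$, fix a smooth bump $\phi_\delta$ of width $\asymp\delta$, and periodise it to $\Phi_\delta(y)=\sum_{m\in\mathbb Z}\phi_\delta(y-m)=\sum_{k\in\mathbb Z}\widehat{\phi_\delta}(k)\,e^{2\pi iky}$; then
\[
\int \Phi_\delta(qx-\gamma)\,d\mu(x)=\sum_{k\in\mathbb Z}\widehat{\phi_\delta}(k)\,e^{-2\pi ik\gamma}\,\widehat{\mu}(-qk),
\]
whose main term is $\widehat{\phi_\delta}(0)\asymp\delta$, so the whole question reduces to the tail bound $\sum_{k\ne0}\min(\delta,|k|^{-2})\,|\widehat{\mu}(qk)|\ll\delta$ for all $q$ outside a sparse exceptional set.

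Using $\widehat{\mu}(\xi)=\prod_{j\ge1}g(\xi p^{-j})$ with $g(t)=\frac1{p-1}\sum_{d\in D}e^{2\pi idt}$, one observes that $|\widehat{\mu}(qk)|$ can fail to be small only if $qkp^{-j}$ lies near $\mathbb Z$ for many scales $j$, a rigid coupling of $q$ and $k$ that an Erd\H{o}s--Kahane type argument turns into a bound of $o(Q)$ -- with a power saving -- on the number of bad $q\le Q$, so that these $q$ can be discarded when forming the $\limsup$ set. This is exactly where $p$ large is used: when $p-1$ is close to $p$ the factors $|g(t)|$ decay sharply away from the integers, so even a single bad scale $j$ already kills the product and the exceptional set becomes very thin. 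The resulting two-sided estimate $\mu(\{x:\|qx-\gamma\|\le\psi(q)\})\asymp\psi(q)$ gives, via Borel--Cantelli, the announced inhomogeneous Khinchine convergence statement ($\mu(W(\psi,\gamma))=0$ whenever $\sum_q\psi(q)<\infty$) and, via a second-moment argument controlling the cross terms $\widehat{\mu}(qk-q'k')$, full $\mu$-measure of $W(\psi,\gamma)$ whenever $\sum_q\psi(q)=\infty$; feeding the latter into the ubiquity/mass-transference machinery of Beresnevich--Dickinson--Velani and Beresnevich--Velani, adapted to the exact-dimensional measure $\mu$ and to an inhomogeneous shift $\gamma$, converts it into $\mathcal H^t(K\cap W_\nu)=\infty$ for every $t<2/(\nu+1)+s-1$. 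Taking the maximum with the $s/(\nu+1)$ bound and comparing with the upper bound proves the conjecture for these $K$; letting $\nu\to1^+$, $\max\{s/2,\,s\}=s$ yields $\Haus(K\cap W_{>1})=\Haus K$, i.e. Conjecture \ref{LSV}.

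The main obstacle is the exceptional-set estimate: one needs a genuinely quantitative Erd\H{o}s--Kahane argument, tailored to the products $\prod_{j}g(qkp^{-j})$ for the particular $g$ of a missing-digit measure, that controls the denominators carrying a large Fourier tail strongly enough -- ideally with a polynomial saving in $Q$ -- to survive being removed from a $\limsup$ set; the cross-term bound needed for the second-moment step is a variant of the same difficulty. A secondary issue is matching the resulting counting/measure statement to the precise hypotheses of the (inhomogeneous) ubiquity framework, uniformly over dyadic ranges of $q$, and -- if one wants an explicit threshold on $p$ rather than merely ``$p$ large'' -- replacing the soft decay bounds for $g$ by explicit numerical estimates, which is delicate near the transition $\nu\approx 1/(1-s)$.
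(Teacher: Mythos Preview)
Your outline correctly identifies the Fourier/Erd\H{o}s--Kahane mechanism as the engine, but two of the structural steps are miscalibrated.

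\textbf{The upper bound is not soft.} The cover you describe---count the $O(q^{s})$ rationals $a/q$ within $1/q$ of $K$ via box-counting and sum $(q^{-\nu-1})^{t}$---only gives $t>(s+1)/(\nu+1)$, which for $s<1$ is strictly weaker than $2/(\nu+1)+s-1$. To reach the conjectured exponent you must count rationals within the \emph{much finer} neighbourhood of width $q^{-\nu-1}$, and that count, $\asymp q^{1+(\nu+1)(s-1)}$, is exactly the equidistribution statement GCP(1) provides; it is not a consequence of box-counting and is where ``large $p$'' (i.e.\ $\dim_{l^1}\mu>1/2$) genuinely enters. The paper does this in Theorem~\ref{Jarnik1}. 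This is also why only $\nu\in(1,1+c)$ is obtained rather than all $\nu>1$: GCP(1) holds only for $\delta\gg q^{-\alpha}$ with $\alpha=\dim_{l^1}\mu/(1-\dim_{l^1}\mu)<\infty$, so the counting input disappears once $\nu$ exceeds this threshold. Your proposal does not see this obstruction because it treats the upper bound as free.

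\textbf{The lower-bound route via divergence and mass transference overshoots what is available.} The second-moment/cross-term bound you need for a full Khinchine divergence does not close with these methods: the paper's Fourier machinery reaches only $\psi(q)=1/(q\log\log q)$ (Section~\ref{divergence}), well short of the $\psi(q)=\epsilon/q$ input that ubiquity would require, and the full divergence for these measures is currently known only through the entirely different homogeneous-dynamics argument of Khalil--L\"uthi. The paper therefore does \emph{not} go through mass transference at all; instead it builds the lower bound directly by a Besicovitch-type Cantor construction (Theorems~\ref{Jarnik2} and~\ref{Jarnik4}): at each level one uses the \emph{local} form of GCP(1) on a branch $\mu'$ of $\mu$ to place $\asymp Q^{2s+c'(s-1)}/\log Q$ children with prime denominators (hence $1/Q^{2}$-separated) inside each parent interval. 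The local counting only becomes valid once $Q$ exceeds a $1/\beta$-th power of the parent scale, which is why Theorem~\ref{Jarnik2} loses to $s-o_{c'\to0}(1)$; getting the sharp $s-c'/(2+c')$ in Theorem~\ref{Jarnik4} needs the stronger hypothesis $\dim_{l^1}\mu\cdot\Haus\mu>1/2$, and this is precisely why the threshold jumps from $p\ge15$ to $p>10^{7}$. This construction is also why the paper is restricted to $\gamma=0$ for the lower bound (the $1/Q^{2}$-separation uses that distinct reduced fractions repel), contrary to your inhomogeneous claim.
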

\begin{conjx}[Kleinbock-Lindenstrauss-Weiss \cite{KLW}]\label{KH}
	Let $K$ be a missing digits set with the corresponding Cantor-Lebesgue measure $\mu.$ Let $\psi$ be a non-increasing monotonic approximation function. Then $\mu(W(\psi))=0$ if $\sum_q \psi(q)<\infty$ and otherwise $\mu(W(\psi))=1.$ In other words, the measure $\mu$ satisfies the conclusion of Khinchine's theorem which was proved in \cite{Khintchine} for the Lebesgue measure. 
\end{conjx}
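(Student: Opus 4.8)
The plan is to prove the dichotomy through the two Borel--Cantelli lemmas, with a sharp estimate for $\mu$ of the approximation sets as the engine, that estimate in turn coming from the equidistribution of the rationals with a fixed denominator with respect to $\mu$. Write $A_q=\{x:\|qx\|\le\psi(q)\}$, so that $W(\psi)=\limsup_q A_q$, and observe that $A_q$ is the preimage of $I_q:=[-\psi(q),\psi(q)]\subset\mathbb{R}/\mathbb{Z}$ under $x\mapsto qx\bmod 1$; equivalently $\mu(A_q)=\nu_q(I_q)$, where $\nu_q$ is the pushforward of $\mu$ under that map and $\widehat{\nu_q}(m)=\widehat\mu(mq)$. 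Expanding (a smoothing of) $\mathbbm{1}_{I_q}$ in a Fourier series, with coefficients $c_m$ satisfying $c_0=2\psi(q)$ and $|c_m|\le\min\{2\psi(q),(\pi|m|)^{-1}\}$, gives
\[
\mu(A_q)=2\psi(q)+\sum_{m\ne 0}c_m\,\widehat\mu(mq),
\]
so everything reduces to the behaviour of $\widehat\mu$ along the progression $q,2q,3q,\dots$. Since $\mu=\mu_{p,D}$ is self-similar with ratio $1/p$ one has $\widehat\mu(\xi)=\prod_{j\ge 1}g(\xi/p^j)$ with $g(t)=|D|^{-1}\sum_{d\in D}e^{2\pi i dt}$, and the obstruction to outright Fourier decay is that $|\widehat\mu|$ stays comparable to $1$ along a lacunary-type ``resonant'' set of frequencies --- for instance $|\widehat\mu(p^j)|$ is a fixed positive constant for the middle-third set. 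Call $q$ non-resonant if $|\widehat\mu(mq)|$ decays (polynomially in $|m|q$, say) for all $m\ne 0$; an Erd\H{o}s--Kahane-type argument shows that the resonant $q\le Q$ number at most $Q^{o(1)}$, and for $p$ large at most $\exp(O_p(\sqrt{\log Q}))$ with a constant shrinking as $p\to\infty$.

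For the convergence half $\sum_q\psi(q)<\infty$, split $\sum_q\mu(A_q)$ according to resonance. For a non-resonant $q$ the error $\sum_{m\ne 0}c_m\widehat\mu(mq)$ is $\ll\psi(q)$, obtained by pairing the decay of $\widehat\mu(mq)$ against $\min\{2\psi(q),(\pi|m|)^{-1}\}$, so $\mu(A_q)\ll\psi(q)$ and these terms sum to a finite quantity. For a resonant $q$ one uses only the Ahlfors-regularity bound $\mu(A_q)\ll q^{1-s}\psi(q)^s$ with $s=\Haus K_{p,D}$, together with $q\psi(q)\to 0$ (a consequence of monotonicity and $\sum\psi<\infty$), and sums over the sparse resonant set in dyadic blocks. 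Borel--Cantelli then yields $\mu(W(\psi))=0$.

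For the divergence half $\sum_q\psi(q)=\infty$ it suffices to prove $\mu(W(\psi))>0$, since a zero--one law for these $\limsup$ sets (a density argument exploiting the self-similarity of $\mu$) then forces full measure; equivalently one may run the Beresnevich--Velani $\mu$-ubiquity machinery, whose hypothesis is exactly the equidistribution of the $q$-rationals with respect to $\mu$. I would use a second-moment argument: applying the Fourier expansion to $A_q\cap A_{q'}$ produces cross terms in $\widehat\mu(mq+m'q')$, and a two-variable Erd\H{o}s--Kahane count shows that $mq+m'q'$ is non-resonant for all but a negligible share of the relevant quadruples $(m,m',q,q')$, giving quasi-independence $\mu(A_q\cap A_{q'})\ll\mu(A_q)\mu(A_{q'})$ along a full-weight family of pairs. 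Combined with the lower bound $\mu(A_q)\gg\psi(q)$ for non-resonant $q$ (again the equidistribution of the $q$-rationals) and the fact that the resonant $q$ are far too sparse to affect divergence of $\sum\psi$ under monotonicity, Paley--Zygmund gives $\mu(\limsup_q A_q)>0$, hence $\mu(W(\psi))=1$.

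The main obstacle is uniformity in the Erd\H{o}s--Kahane step. Against an arbitrary monotonic $\psi$ the resonant denominators must be absorbed with no slack, and the crude bound $\mu(A_q)\ll q^{1-s}\psi(q)^s$ survives summation over a merely $Q^{o(1)}$-sparse set only when $s>\tfrac12$; likewise the non-resonant error is comfortably $\ll\psi(q)$ only when the Fourier decay exponent along non-resonant progressions exceeds $\tfrac12$, and for smaller exponents one must either prove a genuinely sharper estimate on $\mu(A_q)$ at resonant $q$ or extract cancellation from the phases of $\widehat\mu(mq)$, with the divergence half then needing the matching two-variable statement. Our analysis of the Fourier coefficients of $\mu_{p,D}$ is precise enough to close these gaps only when the contraction ratio $1/p$ is small, i.e.\ for $p$ large, where the resonances are both rare (the sharper $\exp(O_p(\sqrt{\log Q}))$ count) and individually small; handling moderate $p$, or digit sets $D$ with exceptional additive structure, would require a new idea for the low-frequency resonances, and it is this that confines the method to large $p$ --- where it delivers the convergence direction, including its inhomogeneous form --- leaving the full Conjecture \ref{KH} for general missing digits sets open.
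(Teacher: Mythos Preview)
First, note that the statement is Conjecture~\ref{KH}, which the paper does \emph{not} prove in full; it remains open. The paper establishes only partial results: the convergence half under the hypothesis $\dim_{l^1}\mu>1/2$ (Theorem~\ref{GCPtoKS}), which holds for instance for $\mu_{15}$, and a weak divergence statement ($\mu(W(\psi))=1$ for $\psi(q)=1/(q\log\log q)$) under the same hypothesis. Your final paragraph concedes essentially the same restriction to large $p$, so in scope you and the paper agree; the question is whether your route actually gets there.

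For convergence your approach differs materially from the paper's. You try to control each $\mu(A_q)$ individually, splitting $q$ into resonant and non-resonant. The paper instead estimates the dyadic block $\sum_{q=Q}^{2Q}\mu(A(\delta,q,\gamma))$ directly: via Theorem~\ref{Lattice Counting} this reduces to bounding $\sum_{|\xi|\le 2KQ/\delta}d(\xi)|\widehat\mu(\xi)|$, and since $d(\xi)=o(|\xi|^\epsilon)$ the remaining sum is controlled by the $l^1$-dimension condition $\dim_{l^1}\mu>1/2$ (Section~\ref{LCST}). Averaging over $q$ is exactly what sidesteps any resonant/non-resonant dichotomy for individual denominators and yields the clean criterion.

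Your route has a concrete gap: the claim that the resonant $q\le Q$ number at most $Q^{o(1)}$, or $\exp(O_p(\sqrt{\log Q}))$, is not what the Erd\H{o}s--Kahane argument delivers. Theorem~\ref{EK} shows that the frequencies $\xi\in[1,N]$ with $|\widehat\mu(\xi)|\ge|\xi|^{-\Delta}$ number at most $N^{1-\lambda}$ with $\lambda=D(\sigma\|1-2\epsilon)/\log p$; this $\lambda$ is small (indeed $\lambda\to 0$ as $p\to\infty$), so the bad-frequency count is a genuine power of $N$, not $N^{o(1)}$. Your ``resonant $q$'' --- those for which \emph{some} multiple $mq$ is a bad frequency --- form an even larger set, since each bad $\xi$ contributes all its divisors as candidate $q$. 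With the correct count, the crude bound $\mu(A_q)\ll q^{1-s}\psi(q)^s$ summed over resonant $q$ does not obviously converge, and one is pushed back to precisely the averaged estimate the paper uses.

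For divergence you sketch a full second-moment quasi-independence argument. The paper does not achieve this: its divergence proof (Section~\ref{divergence}) stops at $\psi(q)=1/(q\log\log q)$, and the paper explicitly notes that this is weaker than Khalil--L\"uthi's result, which uses homogeneous dynamics rather than Fourier analysis. The two-variable resonance count you invoke for the cross terms $\widehat\mu(mq+m'q')$ is not carried out in the paper, and given the issue with the one-variable count above, making it rigorous would require substantially new input.
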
	

By setting $\nu\to 1$ in Conjecture \ref{BD}, we see that Conjecture \ref{LSV} follows as a consequence. All the conjectures are open. We provide a non-exhausting list of  known results.
\begin{thm*}[Known results] Let $K$ be a missing digits set and $\mu$ be the corresponding Cantor-Lebesgue measure.
	\begin{itemize}
	\item[(1)] Levesley, Salp, Velani \cite{LSV ref}: $\Haus K\cap W_{>1}\geq \Haus K/2$.
	\item[(2)]  Bugeaud, Durand \cite{BD16}: $\Haus (K+a)\cap W_\nu\leq \max\left\{\frac{\Haus K}{\nu+1},\frac{2}{\nu+1}+\Haus K-1\right\}$ for Lebesgue almost all $a\in \mathbb{R}.$
	\item[(3)] Kleinbock, Lindenstrauss, Weiss \cite{KLW}, Pollington, Velani \cite{PV2005}: $\mu(W_{>1})=0.$ \footnote{Measures with this property are called to be extremal. Other well known examples are surface measures carried by non-degenerate manifolds, see \cite{KM98}.}
	\item[(4)] Einsiedler, Fishman, Shapira \cite{EFS11}, Simmons, Weiss \cite{SW19}: Let $\epsilon>0.$ Then we have $\mu(W(\psi))=1$ for $\psi:q\to \epsilon/q.$ \footnote{Results in \cite{EFS11}, \cite{SW19} are a bit more than just $\mu(W(\psi))=1$ for $\psi: q\to\epsilon/q.$ In fact, they are able to study the continued fractions of a generic element in self-similar sets.}
	\end{itemize}
\end{thm*}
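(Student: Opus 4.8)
The statement merely collects results already in the literature, so strictly speaking its ``proof'' is the list of citations \cite{LSV ref}, \cite{BD16}, \cite{KLW}, \cite{PV2005}, \cite{EFS11}, \cite{SW19}; here is how I would organize the four arguments. For item (1) I would argue constructively. Fix a digit $d\in D$ and a sufficiently rapidly increasing sequence $n_1<n_2<\cdots$, and consider the Cantor subset $E_\nu\subset K=K_{p,D}$ of those $x=\sum_j a_jp^{-j}$ whose digits are free to range over $D$ except that $a_{n_i+1}=\cdots=a_{\lceil(\nu+1)n_i\rceil}=d$ for every $i$. Truncating such an $x$ at position $n_i$ and appending the periodic tail $\overline{d}$ produces a rational $p'/q$ with $q=p^{n_i}(p-1)$ and $\|qx-p'\|\lesssim p^{-(\nu+1)n_i}\lesssim q^{-\nu}$, so $E_\nu\subset W_\nu$; a routine mass-distribution estimate on $E_\nu$ (at the scale $p^{-\lceil(\nu+1)n_i\rceil}$ only the first $n_i$ digits are free) gives $\Haus E_\nu\geq \Haus K/(\nu+1)$, and letting $\nu\to 1^{+}$ yields $\Haus K\cap W_{>1}\geq \Haus K/2$.

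For item (2) the plan is a first Borel--Cantelli / covering argument for a generic translate. Write $W_\nu=\limsup_{q}\bigcup_{0\leq p\leq q}B(p/q,q^{-\nu-1})$. Using that $\mu$ is Ahlfors $s$-regular with $s=\Haus K$ -- so that the Lebesgue measure of the $r$-neighbourhood of $K$ is $\asymp r^{1-s}$ -- one estimates, for $a$ chosen at random in $[0,1]$, the expected number of pairs $(p,q)$ with $q\asymp Q$ for which $B(p/q,q^{-\nu-1})$ meets $K+a$, and then how efficiently each such nonempty ball is covered by pieces of $K$ at the natural scale. Feeding the resulting sum into the Borel--Cantelli lemma bounds $\Haus (K+a)\cap W_\nu$ for Lebesgue-a.e.\ $a$; the two regimes -- whether $q^{-\nu-1}$ is larger or smaller than the local scale structure of $K$, i.e.\ whether $\nu$ is below or above $1/(1-s)$ -- account for the two terms of the maximum.

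For items (3) and (4) I would invoke homogeneous dynamics rather than reprove them. Item (3): a self-similar measure satisfying the open set condition is \emph{friendly} in the sense of Kleinbock--Lindenstrauss--Weiss (it is Federer and decays polynomially near affine hyperplanes), friendly measures are extremal, and extremality is precisely $\mu(W_{>1})=0$; alternatively \cite{PV2005} give a self-contained counting proof for missing-digit sets. Item (4): one uses the equidistribution of expanding translates $g_t\mu$ on the space of unimodular lattices -- respectively the random-walk argument of \cite{SW19}, which shows that $\mu$-a.e.\ $x$ has unbounded partial quotients -- to deduce, for the critical function $\psi(q)=\epsilon/q$, that $\mu(W(\psi))=1$.

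The genuine obstacle is hidden in item (1): the constructive bound $\Haus K/(\nu+1)$ is forced down because the only rational approximations it exploits are ``syntactic'' -- repeat a digit -- and these cost essentially all of the entropy of the digit process beyond scale $n_i$, whereas matching Conjecture \ref{BD}, and in particular producing the second term $\tfrac{2}{\nu+1}+\Haus K-1$, requires using \emph{all} rationals that happen to lie near $K$, including arithmetically generic ones. Quantifying how many such rationals there are at each scale -- an equidistribution statement for $\mathbb{Q}^n$ relative to $\mu$ -- is exactly where the present paper begins and where all the real difficulty lies.
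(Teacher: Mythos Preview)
Your proposal is correct in spirit and in substance: the paper itself offers no proof of this statement beyond the citations, and your sketches for items (1)--(4) are faithful outlines of the arguments in \cite{LSV ref}, \cite{BD16}, \cite{KLW}/\cite{PV2005}, and \cite{EFS11}/\cite{SW19} respectively. Your closing remark, that the intrinsic construction in (1) only accesses the term $\Haus K/(\nu+1)$ and that reaching the second term of Conjecture~\ref{BD} requires counting generic rationals near $K$, is exactly the point at which the paper's own contribution begins.
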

\begin{rem*}
	Recently, Khalil and L\"{u}thi \cite{KL20} are able to prove Conjecture  \ref{KH} for 'thick enough' self-similar measures with rational parameters and the open set condition. Before that, results in (3),(4) are at the cutting edge. The aim of this paper is to prove Conjecture \ref{LSV} and partially Conjecture \ref{BD} for 'thick enough' self-similar measures. This is similar to the spirit of \cite{KL20}.
\end{rem*}
\subsection{Statements of the main results}
Now, we state the main results in this paper. In what follows, for each odd integer $p>2,$ $K_p$ is the missing digits set with base $p$ and one missing digit $(p-1)/2,$ i.e. $K_{p}=K_{p,D}$ with the digits set $D=\{0,\dots,p-1\}\setminus \{(p-1)/2\}.$ We also denote $\mu_p$ to be corresponding Cantor-Lebesgue measure. We call $K_p$, resp. $\mu_p$ to be the middle-$p$th Cantor set, resp. measure.

\begin{thmx}[Main]\label{Main}
We have the following results.

(1) Conjecture \ref{LSV} holds for $K_{15}.$

(2) The convergence part of Conjecture \ref{KH} holds for $\mu_{15},$ i.e. $\sum_{q}\psi(q)<\infty$ implies that $\mu_{15}(W(\psi))=0$ for non-increasing approximation functions $\psi.$ On the other hand, we have $\mu_{15}(W(\psi))=1$ with $\psi:q\to 1/(q\log\log q).$

(3) For the missing digits set $K_{10^7+1},$ there is a number $c>0$ such that Conjecture \ref{BD} holds for $\nu\in (1,1+c)$. 
\end{thmx}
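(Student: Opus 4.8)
The plan is to establish a single distributional statement about rational points near the middle-$p$th Cantor set — an "equidistribution" estimate for $\mathbb{Q}^n$ against the measures $\mu_p$ — and then harvest all three conclusions of Theorem~\ref{Main} from it by standard covering and mass-transference arguments. Concretely, I would aim to prove a two-sided bound on the number of reduced fractions $a/q$ with $q\asymp Q$ lying within distance $\delta$ of $\mathrm{supp}(\mu_p)$, of the shape $N(Q,\delta)\asymp Q^{2}\,\delta^{1-s}$ for $\delta\in[Q^{-2},Q^{-1}]$ (here $s=\Haus K_p$), together with a "regularity" or "local" refinement saying that these fractions are spread out at the natural scale dictated by $\mu_p$. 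The upper bound for $N(Q,\delta)$, combined with a Borel--Cantelli argument along $q$, yields the convergence part of Conjecture~\ref{KH} for $\mu_{15}$ (statement (2), first half) and, feeding into the Bugeaud--Durand upper bound machinery of \cite{BD16} (their result (2) in the known-results list, which is only for a.e.\ translate, upgraded here to the untranslated set via equidistribution), gives the upper bound in statement (3). The lower bounds — the divergence-type statements — come from a Mass Transference Principle applied to the ubiquitous system furnished by the lower bound on $N(Q,\delta)$: this gives $\mu_{15}(W(\psi))=1$ for $\psi:q\mapsto 1/(q\log\log q)$ (statement (2), second half), $\Haus K_{15}\cap W_{>1}=\Haus K_{15}$ (statement (1)), and the matching lower bound $\Haus K_{p}\cap W_\nu\ge\max\{\frac{s}{\nu+1},\frac{2}{\nu+1}+s-1\}$ for $\nu$ close to $1$ (statement (3)).

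The analytic heart, and the place where the explicit thresholds $p\ge 15$, $p>10^7$ enter, is the estimation of $N(Q,\delta)$, which I would reduce to Fourier analysis of $\mu_p$. Writing $\widehat{\mu_p}$ for the Fourier--Stieltjes transform, a fraction $a/q$ is $\delta$-close to $\mathrm{supp}(\mu_p)$ essentially iff the "thickened" indicator of the $\delta$-neighbourhood has positive $\mu_p$-mass near $a/q$, and summing a smoothed version of this over $a$ turns into a sum of $\widehat{\mu_p}(kq)$ over $k$. Because $\mu_p$ is self-similar with ratio $1/p$ and a single deleted digit, $\widehat{\mu_p}(\xi)=\prod_{j\ge1}g(\xi/p^{j})$ where $g$ is the (explicit, trigonometric-polynomial) Fourier transform of one digit distribution; the key input is a quantitative bound $|\widehat{\mu_p}(\xi)|\lesssim |\xi|^{-\eta}$ on average, or more precisely control of $\sum_{\xi\le X}|\widehat{\mu_p}(\xi)|^2$, which is exactly the kind of estimate whose constant depends on $p$ and forces $p$ large. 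The decay of $\widehat{\mu_p}$ is not uniform — it fails precisely at frequencies that are $p$-adically close to having a special digit structure — so one cannot get pointwise power decay; instead one runs an \emph{Erd\H{o}s--Kahane type argument}: one shows that the set of "bad" frequencies (where $\prod g(\xi/p^j)$ is not small) is sparse, by observing that consecutive digits of $\xi$ in base $p$ are forced into a thin set whenever $|g(\xi/p^j)|$ stays close to $1$, and sparseness of bad frequencies is enough to recover the desired average decay.

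The main obstacle, and what I expect to occupy the bulk of the work, is making the Erd\H{o}s--Kahane argument quantitative and effective enough to produce the genuinely explicit constants claimed (the contrast between $p\ge15$ sufficing for (1) and (2) but $p>10^7$ being what one can prove for (3) strongly suggests the bottleneck is a delicate numerical optimisation, presumably requiring a rigorous computer-assisted estimate of some supremum of $|g|$ or of a transfer-operator-type quantity over a compact parameter range). Specifically: one needs a lower bound on how fast $|g(t)|$ drops below $1$ away from $t\in\mathbb{Z}$, uniformly in $p$ after rescaling, then one must track how the "escape" of digits compounds over many scales $p^j$ to bound the counting function for bad frequencies up to height $X$, and finally one must check that the resulting exponent beats the threshold needed for the Mass Transference Principle to yield the full Hausdorff dimension (this is why (1) is easier than (3): Conjecture~\ref{LSV} only needs $W_{>1}$ to have full dimension, so any power saving suffices, whereas Conjecture~\ref{BD} for a range of $\nu$ requires the sharp exponent in $N(Q,\delta)$, hence a much stronger — and numerically more expensive — Fourier estimate). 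The remaining steps — converting the counting estimate into a ubiquitous system, invoking the Mass Transference Principle, and assembling the upper bounds via Borel--Cantelli — are, by contrast, routine given the machinery in \cite{LSV ref}, \cite{BD16}, and the standard references on ubiquity.
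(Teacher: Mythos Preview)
Your Fourier-analytic strategy for the counting estimate is essentially the paper's approach, and you have correctly identified the Erd\H{o}s--Kahane mechanism (bad frequencies are those whose base-$p$ digits stay in a thin set, hence are sparse) as the engine that produces the needed decay. One refinement: the paper's key quantity is not the $l^2$-sum $\sum_{|\xi|\le X}|\widehat{\mu_p}(\xi)|^2$ --- that is just the Hausdorff dimension and is immediate --- but rather the $l^1$-sum $\sum_{|\xi|\le X}|\widehat{\mu_p}(\xi)|$. The condition driving (1) and (2) is $\dim_{l^1}\mu_{15}>1/2$, obtained by splitting frequencies into ``good'' (where pointwise decay $|\widehat{\mu}(\xi)|\le|\xi|^{-\Delta}$ with $\Delta>1/2$ holds, via Erd\H{o}s--Kahane) and ``bad'' (a sparse set, handled by Cauchy--Schwarz against the $l^2$-bound). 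The stronger threshold for (3) is $\Haus\mu\cdot\dim_{l^1}\mu>1/2$, not merely a sharper exponent in the counting; this arises because the Cantor-set construction for the dimension lower bound needs the counting to hold \emph{locally} on every branch $\mu'$ of $\mu$, and the scale at which a branch of diameter $r$ begins to obey the counting is $r^{-1/\beta}$ for some $\beta$ depending on $\dim_{l^1}\mu$.

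There is, however, a genuine gap in your plan for the lower bounds. The Mass Transference Principle cannot deliver $\mu_{15}(W(\psi))=1$: MTP converts Lebesgue full-measure statements into Hausdorff-measure statements, but says nothing about a singular fractal measure like $\mu_{15}$. The paper instead proves this by a divergence Borel--Cantelli argument: one shows quasi-independence $\mu(B_{Q_1}\cap B_{Q_2})\ll\mu(B_{Q_1})\mu(B_{Q_2})$ for suitably separated $Q_1,Q_2$ (where $B_Q=\bigcup_{q\in[Q,2Q]}A(\delta_q,q,0)$), which in turn requires the \emph{local} counting estimate for branches of $\mu$ mentioned above, plus a separate argument bounding the superlevel sets $\{f_Q\ge k\}$ to convert $\int f_Q\,d\mu$ into a lower bound for $\mu(B_Q)$. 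Similarly, for the dimension lower bound $\Haus K_p\cap W_\nu\ge s-c'/(2+c')$, the standard MTP/ubiquity framework does not apply directly because the limsup set must be intersected with the fractal $K_p$; the paper builds an explicit Cantor subset by hand, iterating the local counting over nested branches (using rationals with prime denominators to ensure separation). This iteration is where the quantitative dependence on scale becomes critical and where the condition $s\cdot\dim_{l^1}\mu>1/2$ appears. You could in principle recast this as ubiquity relative to the measure $\mu$, but that is not off-the-shelf machinery and would need to be developed with the same scale-dependent constants.
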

\begin{remx}
	(1),(2),(3) holds for other missing digits measures $\mu$ as well. The general conditions are as follows, see Section \ref{Lattice} for the definition of $\dim_{l^1}$:
	
	For (1),(2), we require that $\dim_{l^1}\mu>1/2.$ 
	
	For (3), we require that $\Haus \mu\dim_{l^1}\mu>1/2.$
	
	 Our method does not apply to the middle third Cantor set $K_3.$ The divergence result in (2) is weaker than that of Khalil and L\"{u}thi, and it is only a little bit better than the previous results. However, our method is very different than that in \cite{KL20}. For this reason, we will provide a proof at the end of this paper.
\end{remx}
We note that some of the results in Theorem \ref{Main} hold also with a general inhomogeneous shift $\gamma.$
\begin{thmx}\label{MainInhomo}
	We have the following results.
	
	(1) For $K_{15},$ under the assumption of Conjecture \ref{KH}, suppose that $\sum_{q}\psi(q)<\infty.$ Then $\mu(W(\psi,\gamma))=0$ for all $\gamma\in\mathbb{R}.$
	
	(2) There is a number $c>0$ such that
	\[
	\Haus K_{15}\cap W(\psi,\gamma)\leq \frac{2}{\nu+1}+\Haus K_{15}-1
	\] 
	for all $\nu\in (1,1+c)$ and $\gamma\in\mathbb{R}.$
\end{thmx}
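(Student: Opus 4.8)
Here is how I would approach Theorem \ref{MainInhomo}.

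The starting point is that both parts come from running, \emph{with a shift}, the rational‑point counting / equidistribution estimates that the paper develops for Theorem \ref{Main}, and the first thing I would record is that those estimates do not see $\gamma$. For a radius $\rho>0$ and $q\in\mathbb N$,
\[
\sum_{p}\mu_{15}\!\left(B\!\left(\tfrac{p+\gamma}{q},\rho\right)\right)=\int \Phi_{q,\rho,\gamma}\,d\mu_{15},
\]
where $\Phi_{q,\rho,\gamma}$ is the indicator of the $\rho$‑neighbourhood of $\tfrac1q\mathbb Z+\tfrac\gamma q$. Expanding $\Phi_{q,\rho,\gamma}$ as a Fourier series on $[0,1]$ produces a main term of size $\asymp q\rho$ (the zeroth coefficient, matching the Lebesgue heuristic) and an error
\[
\sum_{m\neq 0}\widehat{\mathbbm 1_{[-\rho,\rho]}}(qm)\,e^{2\pi i m\gamma}\,\widehat{\mu_{15}}(-qm),
\]
in which the shift enters only through the unimodular factors $e^{2\pi i m\gamma}$. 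Consequently every absolute‑value bound on the homogeneous ($\gamma=0$) sums — in particular the fine estimates for $\widehat{\mu_{15}}$ and the Erd\H{o}s--Kahane count driving Theorem \ref{Main} — carries over verbatim and uniformly in $\gamma\in\mathbb R$; the same holds for the count of rationals $p/q$ within a prescribed distance of $K_{15}$.

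For part (1) I would write $W(\psi,\gamma)=\limsup_{q\to\infty}A_q(\gamma)$ with $A_q(\gamma)=\bigcup_{p}\{x:|x-(p+\gamma)/q|\le\psi(q)/q\}$, so that by the convergence Borel--Cantelli lemma it suffices to show $\sum_q\mu_{15}(A_q(\gamma))<\infty$. Since $\mu_{15}(A_q(\gamma))\le\sum_{p}\mu_{15}\!\big(B((p+\gamma)/q,\psi(q)/q)\big)$ over the $O(q)$ relevant $p$, the uniform‑in‑$\gamma$ estimate above — the inhomogeneous form of the bound giving the convergence half of Theorem \ref{Main}(2) — yields $\sum_q\mu_{15}(A_q(\gamma))\le C\sum_q\psi(q)+E$, where $E<\infty$ is a $\gamma$‑independent convergent series built from the Fourier tails of $\mu_{15}$. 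As $\sum_q\psi(q)<\infty$, this is finite. The whole content of part (1) is thus that the passage from $\gamma=0$ to arbitrary $\gamma$ costs nothing (and the hypothesis on $K_{15}$ is itself supplied by Theorem \ref{Main}(2)).

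For part (2), with $\psi:q\mapsto q^{-\nu}$ one has $W(\psi,\gamma)=\limsup_{q\to\infty}\bigcup_{p}B((p+\gamma)/q,\,q^{-\nu-1})$, and I would bound $\Haus\big(K_{15}\cap W(\psi,\gamma)\big)$ by covering, for each $Q$, this set by $\bigcup_{q\ge Q}\bigcup_{p}\big(K_{15}\cap B((p+\gamma)/q,q^{-\nu-1})\big)$. Writing $R(q,\gamma)$ for the number of $p$ with $K_{15}\cap B((p+\gamma)/q,q^{-\nu-1})\neq\emptyset$, and using that $K_{15}$ is Ahlfors $\Haus K_{15}$‑regular so that $\mathcal H^{s}_\infty(K_{15}\cap B(x,r))\lesssim r^{s}$ for $0<s\le\Haus K_{15}$, the $s$‑dimensional content of the cover is $\lesssim\sum_{q\ge Q}R(q,\gamma)\,q^{-(\nu+1)s}$. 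The plan is to feed in the Lebesgue‑heuristic count
\[
R(q,\gamma)\ \lesssim_{\varepsilon}\ q^{\,1-(\nu+1)(1-\Haus K_{15})+\varepsilon}\qquad(\text{uniformly in }\gamma),
\]
the shifted version of the rational‑point estimate obtained from the Fourier analysis of $\mu_{15}$ together with the Erd\H{o}s--Kahane argument behind Theorem \ref{Main}; this makes $\sum_q R(q,\gamma)q^{-(\nu+1)s}$ converge for every $s>\tfrac{2}{\nu+1}+\Haus K_{15}-1$, and hence $\Haus\big(K_{15}\cap W(\psi,\gamma)\big)\le\tfrac{2}{\nu+1}+\Haus K_{15}-1$ for all $\gamma\in\mathbb R$.

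The hard part is the rational‑point count, and it is what forces the restriction on $\nu$. The Fourier/Erd\H{o}s--Kahane method only delivers an unconditional saving of the shape $R(q,\gamma)\lesssim_\varepsilon q^{1-\delta_0+\varepsilon}$ for some fixed $\delta_0>0$ governed by the decay of $\widehat{\mu_{15}}$ and the base‑$15$ digit combinatorics of rationals lying near $K_{15}$; this is as strong as the heuristic bound required above precisely when $(\nu+1)(1-\Haus K_{15})\le\delta_0$, i.e. for $\nu\le\delta_0/(1-\Haus K_{15})-1=:1+c$, and verifying $\delta_0>2(1-\Haus K_{15})$ (so that $c>0$) is exactly where the hypothesis $\dim_{l^1}\mu_{15}>1/2$ enters. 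The two places I expect friction are: checking that the Erd\H{o}s--Kahane bookkeeping, which tracks digit strings of rationals near $K_{15}$, is genuinely unaffected by $\gamma$ (it should be, by the phase remark of the first paragraph, but this must be reconciled with the combinatorics); and the fact that enlarging $c$ is equivalent to improving $\delta_0$, which is precisely the obstruction that keeps Theorem \ref{MainInhomo}(2), like Theorem \ref{Main}(3), partial.
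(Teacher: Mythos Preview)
Your overall approach is correct and matches the paper's. The central observation---that the shift $\gamma$ only introduces a unimodular phase $e^{2\pi i m\gamma}$ in the Fourier expansion, so every estimate based on $|\widehat{\mu_{15}}|$ (Theorem~\ref{Lattice Counting}, and hence GCP(1)) is uniform in $\gamma$---is exactly the content of the paper's subsection on inhomogeneous lattice counting. Part~(1) is then Theorem~\ref{Good Counting to Metric}, and part~(2) is Theorem~\ref{Jarnik1}, both stated and proved for arbitrary $\gamma$.

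Two descriptions should be corrected, though they do not break the argument. First, the Erd\H{o}s--Kahane step (Theorem~\ref{EK}) does not track ``digit strings of rationals near $K_{15}$''; it tracks the base-$15$ digits of \emph{frequencies} $\xi$ to show $|\widehat{\mu_{15}}(\xi)|\le|\xi|^{-\Delta}$ for most $\xi$. The rational-point count $R(q,\gamma)$ arises only indirectly, via $\mu_{15}(A(\delta,q,\gamma))$ and AD-regularity, not from any digit analysis of rationals---so there is no separate ``reconciliation with the combinatorics'' to perform. Second, the constraint $\nu\in(1,1+c)$ does not come from matching a fixed saving $\delta_0$ against the heuristic exponent $(\nu+1)(1-s)$. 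GCP(1) asserts $\sum_{q=Q}^{2Q}\mu_{15}(A(\delta,q,\gamma))\asymp Q\delta$ \emph{only in the range} $\delta\gg Q^{-1-c}$, with $1+c=\dim_{l^1}\mu_{15}/(1-\dim_{l^1}\mu_{15})$; within that range the heuristic count is achieved exactly and Theorem~\ref{Jarnik1} gives the bound $s-c'/(2+c')=\tfrac{2}{\nu+1}+s-1$. Outside that range the Fourier error term is simply uncontrolled. A final technical point: the paper establishes only the dyadic estimate (GCP), not the pointwise $R(q,\gamma)\lesssim q^{1-\ldots}$ you write, which would be SCP and is not claimed. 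Your Hausdorff--Cantelli sum only needs the dyadic version, so the argument survives, but the input should be phrased accordingly.
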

\begin{remx}
It is in interesting, and we are not able, to prove the inhomogeneous versions of Theorem \ref{Main}(3) and the divergence part of Theorem \ref{Main}(2).
\end{remx}
\subsection{A lattice counting method}\label{Lattice}
Our main strategy for proving Theorems \ref{Main}, \ref{MainInhomo} is via a lattice counting method which we now introduce. Let $n\geq 1$ be an integer. Let $\gamma\in [0,1]^n,\delta\in (0,1).$ We construct the following set
\[
A(\delta,Q,\gamma)=\{x\in\mathbb{R}^n: \|Qx-\gamma\|\leq \delta\}.
\]
For example, if $\gamma=0$ then $A(\delta,Q,0)$ is the set of points which are $\delta/Q$-close to rational points with denominator $Q.$ Let $\mathcal{L}_{[0,1]^n}$ be the Lebesgue measure restricted to $[0,1]^n.$ It is clear that for $\delta\in (0,0.5)$
\[
\mathcal{L}_{[0,1]^n}(A(\delta,Q,\gamma))\asymp\delta^n.
\]
In fact, it is possible to compute the above Lebesgue measure exactly. Now it is interesting to see whether the above counting property holds for other probability measures as well. Towards this direction, we make the following definition.
\begin{defn}
Let $n\geq 1$ be an integer. Let $\mu$ be a Borel probability measure on $\mathbb{R}^n.$ We say that $\mu$ has
\begin{itemize}
	\item Super counting property n (or SCP(n)), if for some $\alpha>1/n$ and large enough integers $Q,$ $\mu(A(\delta,Q,\gamma))\asymp \delta^n$ as long as $\delta\gg Q^{-\alpha}.$ That is, there are numbers $c_1,c_2,M>0$ such that if $Q\geq M$ and $1>\delta\geq c_1 Q^{-\alpha}$ then
	\[
	c^{-1}_2\delta^n\leq  \mu(A(\delta,Q,\gamma))\leq c_2 \delta^n.
	\]
	\item  Good counting property n (or GCP(n)), if for some $\alpha>1/n$ and large enough integers $Q,$ $\sum_{q=Q}^{2Q}\mu(A(\delta,q,\gamma))\asymp Q\delta^n$ as long as $\delta\gg Q^{-\alpha}.$
\end{itemize}
The supreme of the possible values of $\alpha$ for the above to hold is called the super/good counting threshold of $\mu.$
\end{defn}
Clearly, SCP(n) implies GCP(n). Lattice counting properties are closely related to metric Diophantine approximations. Recently, there are many works on determining the super/good counting properties as well as the thresholds for surface measures carried by non-degenerate submanifolds, see \cite{B12},\cite{BVVZ} and \cite{H20}. In this paper, we will show that some self-similar measures in $\mathbb{R}^n$ have GCP(n). Our method relies on Fourier analysis. First, we introduce the following notion of $l^p$-dimensions.\footnote{In \cite{Sh}, a notion of $L^p$-dimension was introduced. It is not the same as the $l^p$-dimension in this paper.}
\begin{defn}
Let $\mu$ be a Borel probability measure supported on $[0,1]^n.$ Let $\hat{\mu}$ be its Fourier series. We define the $l^1$-dimension of $\mu$ to be as follows,
\[
\dim_{l^1} \mu=\sup\left\{s>0: \sum_{|\xi|\leq R} |\hat{\mu}(\xi)|\ll R^{n-s}\right\}.
\]
More generally, let $q>0.$ We define 
\[
\dim_{l^q} \mu=\sup\left\{s>0: \sum_{|\xi|\leq R} |\hat{\mu}(\xi)|^q\ll R^{n-s}\right\}.
\] 
\end{defn}
We single out the $l^1$-dimension because it will be most relevant to us. For $q=2,$ $\dim_{l^2} \mu$ is the $l^2$-dimension of $\mu.$ It is closely related to the Hausdorff dimension. In fact, for those measures that will be considered in this paper, their Hausdorff dimensions are simply equal to their $l^2$-dimensions.  We will recall the definitions in Section \ref{Preliminaries}.  Unlike the $l^2$-dimension, in general, it is rather difficult to deal with the $l^1$-dimension.\footnote{We wish to return to the exact computations of the $l^1$-dimensions of missing digits measures in a forthcoming project. For example, it is possible to show that $\dim_{l^1}\mu_3<1/2.$ In this paper, we will introduce some numerical methods for estimating the $l^1$-dimensions for some self-similar measures which are not necessarily assumed to be missing digits measures.} For this reason, we start with the case for $n=1$ and introduce the following technical definition.
\begin{defn}\label{DST}
Let $\mu$ be a Borel probability measure supported on $[0,1].$ Let $\hat{\mu}$ be its Fourier series. We say that $\mu$ is spectral if
\[
\#\{\xi\in\mathbb{Z}\cap [0,N]: \hat{\mu}(\xi)\geq |\xi|^{-\Delta}\}\ll N^{1-\lambda}
\]
for a $\Delta>1/2$ and a $\lambda>0.$ The supreme of all possible values of such $\lambda$ is called the \emph{residue dimension} of $\mu$, denoted as 
\[
\dim_{\mathrm{R}} \mu.
\] 
We say that $\mu$ is thick if $\dim_{l^2} \mu+\dim_{\mathrm{R}} \mu>1.$ We denote $\dim_{ST}\mu$ to be the supremum of all possible values of $\min\{\Delta,(\lambda+\dim_{l^2}\mu)/2\}$. Observe that $\mu$ is spectral and thick if and only if $\dim_{ST}\mu>1/2.$ 
\end{defn} 
We will prove the following lemma in Section \ref{L^1} (Parts (1),(3)) and Section \ref{LCST} (Part (2)).
\begin{lma}\label{LL^1}
Let $\mu$ be a Borel probability measure supported on $[0,1].$ Then we have the following results.
\begin{itemize}
	\item [(1)]: $\frac{1}{2}\dim_{l^2}\mu\leq \dim_{l^1}\mu\leq \dim_{l^2}\mu;$
	\item [(2)]: If $\mu$ is spectral and thick, then $\dim_{l^1}\mu>1/2.$ In fact, $\dim_{l^1}\mu\geq \dim_{ST}\mu$;
	\item [(3)]: If $\dim_{l^1}\mu>1/2$, then $\mu$ is spectral.
\end{itemize}
\end{lma}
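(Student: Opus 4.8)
For part (1), the plan is to compare $\ell^1$ and $\ell^2$ sums of Fourier coefficients over dyadic annuli. The upper bound $\dim_{l^1}\mu\le\dim_{l^2}\mu$ is immediate from the trivial pointwise inequality $|\hat\mu(\xi)|^2\le|\hat\mu(\xi)|$ (since $\mu$ is a probability measure, $|\hat\mu(\xi)|\le 1$), which gives $\sum_{|\xi|\le R}|\hat\mu(\xi)|^2\le\sum_{|\xi|\le R}|\hat\mu(\xi)|$, so any $s$ admissible for $\dim_{l^1}$ is admissible for $\dim_{l^2}$. For the lower bound, I would apply Cauchy--Schwarz on a dyadic block $\{|\xi|\sim R\}$: $\sum_{|\xi|\sim R}|\hat\mu(\xi)|\le (\#\{|\xi|\sim R\})^{1/2}\bigl(\sum_{|\xi|\sim R}|\hat\mu(\xi)|^2\bigr)^{1/2}\ll R^{1/2}\cdot R^{(1-s)/2}=R^{1-s/2}$ whenever $s<\dim_{l^2}\mu$; summing the geometric series over dyadic scales up to $R$ keeps the exponent $1-s/2$, so $\dim_{l^1}\mu\ge s/2$, and letting $s\to\dim_{l^2}\mu$ gives the claim.

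For part (3), suppose $\dim_{l^1}\mu>1/2$ and pick $s\in(1/2,\dim_{l^1}\mu)$, so $\sum_{|\xi|\le R}|\hat\mu(\xi)|\ll R^{1-s}$. I would run a Chebyshev/counting argument: for a threshold exponent $\Delta$ to be chosen, the set $S_N=\{\xi\in[0,N]:\hat\mu(\xi)\ge|\xi|^{-\Delta}\}$ satisfies, restricting to $|\xi|\sim R\le N$, $\#(S_N\cap\{|\xi|\sim R\})\cdot R^{-\Delta}\le\sum_{|\xi|\sim R}|\hat\mu(\xi)|\ll R^{1-s}$, hence $\#(S_N\cap\{|\xi|\sim R\})\ll R^{1-s+\Delta}$; summing dyadically gives $\#S_N\ll N^{1-s+\Delta}$. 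Choosing $\Delta\in(1/2,s)$ (possible since $s>1/2$) makes $1-s+\Delta=1-(s-\Delta)<1$ with $\Delta>1/2$, so $\mu$ is spectral with $\lambda$ any value below $s-\Delta$; in particular one can let $\Delta\to 1/2$ and $s\to\dim_{l^1}\mu$ to read off that $\dim_{\mathrm{R}}\mu\ge\dim_{l^1}\mu-1/2$.

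For part (2), assume $\mu$ is spectral and thick, so there exist $\Delta>1/2$ and $\lambda>0$ with $\#\{\xi\in[0,N]:\hat\mu(\xi)\ge|\xi|^{-\Delta}\}\ll N^{1-\lambda}$, and $\dim_{l^2}\mu+\dim_{\mathrm{R}}\mu>1$. The idea is to split the $\ell^1$ sum over $\{|\xi|\sim R\}$ into the ``large'' frequencies in $S_R$ and the ``small'' ones where $|\hat\mu(\xi)|<R^{-\Delta}$. The small part contributes at most $\#\{|\xi|\sim R\}\cdot R^{-\Delta}\ll R^{1-\Delta}$. For the large part I would again use Cauchy--Schwarz, but now over the \emph{sparse} set $S_R\cap\{|\xi|\sim R\}$: its size is $\ll R^{1-\lambda}$ (for $\lambda$ close to $\dim_{\mathrm{R}}\mu$) and $\sum_{|\xi|\sim R}|\hat\mu(\xi)|^2\ll R^{1-t}$ for $t$ close to $\dim_{l^2}\mu$, so this part is $\ll R^{(1-\lambda)/2}R^{(1-t)/2}=R^{1-(\lambda+t)/2}$. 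Adding the two contributions and summing dyadically over scales $\le R$, the total is $\ll R^{\max\{1-\Delta,\,1-(\lambda+t)/2\}}=R^{1-\min\{\Delta,(\lambda+t)/2\}}$, which gives $\dim_{l^1}\mu\ge\min\{\Delta,(\lambda+\dim_{l^2}\mu)/2\}$; optimizing over admissible $\Delta,\lambda,t$ yields $\dim_{l^1}\mu\ge\dim_{ST}\mu$, and $\dim_{ST}\mu>1/2$ exactly under the spectral-and-thick hypothesis, giving $\dim_{l^1}\mu>1/2$. The only mild subtlety—more bookkeeping than obstacle—is handling the interplay of the three near-extremal parameters $\Delta,\lambda,t$ simultaneously and making sure the dyadic sums do not lose powers of $\log R$ in a way that affects the suprema defining these dimensions; since all exponents in the geometric series are bounded away from the endpoint, this is harmless.
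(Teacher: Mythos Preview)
Your proposal is correct and follows essentially the same approach as the paper: the trivial pointwise bound plus Cauchy--Schwarz for (1), the Chebyshev-type counting argument for (3), and the GOOD/BAD split with Cauchy--Schwarz on the sparse BAD set for (2). The only cosmetic difference is that you phrase everything via dyadic blocks while the paper works directly with the full sum $\sum_{|\xi|\le R}$; your extra remark that (3) yields $\dim_{\mathrm{R}}\mu\ge\dim_{l^1}\mu-1/2$ is a nice bonus not made explicit in the paper.
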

We will prove the following two results.
\begin{thm}\label{GCPtoKS}
Let $\mu$ be such that $\dim_{l^1}\mu>1/2$. Then $\mu$ has GCP(1). It satisfies the convergence part of Conjecture \ref{KH}.  If $\mu$ is moreover a missing digits measure, then $\mu(W(\psi,0))=1$ for $\psi:q\to 1/(q\log\log q).$
\end{thm}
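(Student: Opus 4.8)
The plan is to split the statement into three parts. First, I would establish GCP(1) for $\mu$ directly from $\dim_{l^1}\mu>1/2$ by a Fourier-analytic counting argument. The quantity $\sum_{q=Q}^{2Q}\mu(A(\delta,q,\gamma))$ counts, with appropriate weights, the $\mu$-mass of points $x$ that are $\delta/q$-close to a rational with denominator $q$, for $q\in[Q,2Q]$. To access this via Fourier series, I would fix a smooth bump function $\varphi$ with $\widehat{\varphi}$ supported in a bounded set and $\varphi\geq \mathbbm{1}_{[-1,1]}$ (and a companion minorant), so that $\mathbbm{1}_{A(\delta,q,\gamma)}$ is sandwiched between periodized smooth functions $\sum_{x} \varphi(\delta^{-1}(qx-\gamma-x))$ (sum over integers $x$); expanding the periodization in a Fourier series in the variable $qx$ and summing over $q\in[Q,2Q]$ produces a main term of size $\asymp Q\delta$ (the $\xi=0$ contribution, which is exactly the Lebesgue heuristic $\sum_q \delta$) plus an error term controlled by $\sum_{\xi\neq 0}|\widehat{\varphi}(\delta\xi/q)|\,|\widehat{\mu}(\xi)|\cdot(\text{sum over }q)$. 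Because $\widehat{\varphi}$ decays rapidly, the $\xi$-sum is effectively truncated at $|\xi|\lesssim q/\delta\lesssim Q^{1+\alpha}$ for $\delta\gg Q^{-\alpha}$; the hypothesis $\sum_{|\xi|\leq R}|\widehat{\mu}(\xi)|\ll R^{1-s}$ with $s=\dim_{l^1}\mu-\epsilon>1/2$ then bounds the error by $\ll Q^{(1+\alpha)(1-s)}$ up to the $q$-average, and one checks this is $o(Q\delta)$ provided $\alpha$ is chosen close enough to $1$ (so that $\alpha>1/n=1$ is only asymptotically required, i.e. the threshold statement). I should be careful that the cancellation in the $q$-sum $\sum_{q\in[Q,2Q]} e(q\xi\cdot(\text{something}))$ is not needed — absolute values suffice because the $l^1$ bound is strong enough — which keeps the argument clean.

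Second, the convergence part of Conjecture \ref{KH} follows from GCP(1) by a standard Borel--Cantelli argument: if $\sum_q \psi(q)<\infty$, then decomposing $\mathbb{N}$ dyadically into blocks $[2^k,2^{k+1}]$ and applying GCP(1) on each block (with $\delta=\psi(q)$, using monotonicity of $\psi$ so that $\psi(q)\asymp\psi(2^k)$ is comparable across a block, and noting $\psi(q)\gg q^{-\alpha}$ can be assumed since otherwise $W(\psi)$ is contained in a set of $\nu$-well approximable points with $\nu>1$ and hence $\mu$-null by extremality) gives $\sum_q \mu(A(\psi(q),q,0))\ll \sum_q \psi(q)<\infty$; the first Borel--Cantelli lemma then yields $\mu(W(\psi))=0$. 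I would need to handle the mildly annoying point that GCP(1) is stated with a shift $\gamma$ and the threshold $\alpha$; taking $\gamma=0$ is fine and the range $\delta\gg Q^{-\alpha}$ is exactly where the monotone-$\psi$ case lives after discarding the very-well-approximable part.

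Third, the divergence statement $\mu(W(\psi,0))=1$ for $\psi:q\mapsto 1/(q\log\log q)$, under the extra assumption that $\mu$ is a missing digits measure, is where I expect the real work. Here I would use the self-similar/missing-digits structure rather than just the $l^1$ bound: the set $K_{p,D}$ carries a natural tree structure, and one can find, at each level $m$, a rational point with denominator $\asymp p^m$ lying very close to $K_{p,D}$ (indeed inside a cylinder), so that the events $\{x: \|qx\|\leq \psi(q)\}$ for suitable $q$ can be arranged to have $\mu$-mass bounded below and to be sufficiently ``independent'' along a sparse sequence of scales. The $\log\log q$ loss in $\psi$ is precisely the slack one needs to run a quantitative second-moment (Paley--Zygmund / divergence Borel--Cantelli with quasi-independence) argument: one shows $\sum_q \mu(A(\psi(q),q,0))=\infty$ and that the pairwise correlations $\mu(A_q\cap A_{q'})$ are dominated by $\mu(A_q)\mu(A_{q'})$ plus a summable error, using the GCP(1) estimate for the ``diagonal'' and the missing-digits structure (or an Erd\H{o}s--Kahane type input, as advertised in the abstract) to control ``near-diagonal'' overlaps. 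The main obstacle is exactly this correlation estimate: proving quasi-independence of the approximation events at different denominators for a fractal measure is delicate, and the $\log\log$ factor has to be spent carefully to absorb the error terms; since the paper says this part is only slightly better than prior results and the proof is deferred to the end, I would present the clean quasi-independence lemma first and then feed it into the standard divergence machinery.
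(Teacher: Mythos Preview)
Your outline for the first two parts is essentially the paper's approach. One point you leave vague: after summing the single-$q$ error over $q\in[Q,2Q]$, the paper swaps the order of summation to get $\sum_{0<|\xi|\le 2KQ/\delta} d(\xi)\,|\hat\mu(\xi)|$, where $d(\xi)=\#\{q\in[Q,2Q]:q\mid\xi\}\le d(|\xi|)=o(|\xi|^{\epsilon})$. This divisor bound is what makes ``absolute values suffice''; your phrase ``$(\text{sum over }q)$'' as a multiplicative factor would lose a full factor of $Q$ and the argument would fail. With the swap, the error is $\ll (Q/\delta)^{1-\dim_{l^1}\mu+\epsilon}$, and the good-counting threshold comes out as $\dim_{l^1}\mu/(1-\dim_{l^1}\mu)>1$. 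For the convergence part, your appeal to extremality to dispose of the range $\psi(q)<q^{-\alpha}$ is unnecessary (and not a priori available for general $\mu$ with $\dim_{l^1}\mu>1/2$); the paper simply bounds that range by $\sum_k 2^{-nck}<\infty$ using GCP(1) at the threshold, or equivalently replaces $\psi$ by $\max(\psi(q),q^{-\alpha})$.

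For the divergence part your sketch misses the actual mechanism. The paper does not look for special $p$-adic rationals inside cylinders; it uses GCP(1) \emph{locally on each branch} $\mu'$ of $\mu$. Because $|\hat{\mu'}(\xi)|=|\hat\mu(p^{-k}\xi)|$, the same $l^1$ estimate gives a uniform local counting bound once $Q$ exceeds $p^{k/\beta}$ for a fixed $\beta\in(0,1)$; this is what yields the quasi-independence $\mu(B_{Q_1}\cap B_{Q_2})\ll\mu(B_{Q_1})\mu(B_{Q_2})$ for $Q_2\ge Q_1^{3/\beta}$, by restricting to a branch supported in each component of $B_{Q_1}$. A second ingredient you do not mention is the passage from $\sum_{q=Q}^{2Q}\mu(A_q)$ to $\mu\bigl(\bigcup_{q=Q}^{2Q}A_q\bigr)$: one studies the level sets $\{f_Q\in[k,2k]\}$ of $f_Q=\sum_q\chi_{A_q}$, observes that $f_Q(\alpha)\ge k$ forces $\alpha$ to lie near a rational with denominator $\le Q/(k-1)$ (by the $1/(4Q^2)$ separation of rationals in $[Q,2Q]$), and then uses AD-regularity and GCP(1) again to bound $\int_{f_Q\ge k}f_Q\,d\mu\ll (k\log\log Q)^{-1}$. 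Your ``Erd\H{o}s--Kahane input for near-diagonal overlaps'' is not how the paper proceeds here; the missing-digits hypothesis is used for the branch structure and the uniform local constants, not for a separate decay estimate. Without these two pieces (local GCP on branches, and the $f_Q$ level-set analysis), the quasi-independence and the lower bound on $\mu(B_Q)$ are genuinely unproved in your outline.
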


\begin{thm}\label{oneEXI}
There are spectral and thick self-similar measures on $[0,1]$ whose Hausdorff dimensions are smaller than one and bigger than $1/2.$ 
\end{thm}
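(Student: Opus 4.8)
The plan is to construct the desired measure as a self-similar measure $\mu_{p}$ of a missing-digits type, namely the middle-$p$th Cantor measure for a suitably large odd integer $p$, and to verify directly that it is spectral and thick with Hausdorff dimension strictly between $1/2$ and $1$. The dimension bound is immediate: for $K_{p,D}$ with $|D|=p-1$ digits in base $p$, one has $\Haus \mu_{p}=\log(p-1)/\log p$, which tends to $1$ from below as $p\to\infty$ and exceeds $1/2$ already for moderate $p$ (indeed for all $p\geq 3$). So the content is entirely in producing, for $p$ large, the spectral gap estimate
\[
\#\{\xi\in\mathbb{Z}\cap[0,N]:\ \hat{\mu}_p(\xi)\geq |\xi|^{-\Delta}\}\ll N^{1-\lambda}
\]
with some $\Delta>1/2$ and some $\lambda>0$ for which, in addition, $\dim_{l^2}\mu_p+\lambda>1$. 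Since $\dim_{l^2}\mu_p=\Haus\mu_p\to 1$, the thickness condition $\dim_{l^2}\mu_p+\dim_{\mathrm R}\mu_p>1$ will follow as soon as we secure \emph{any} fixed positive $\lambda$ together with $\Delta>1/2$, provided $p$ is taken large enough that $\Haus\mu_p>1-\lambda$.

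The main step is therefore the Fourier-analytic estimate on $\hat{\mu}_p$. I would use the standard infinite-product (Riesz-product) formula: writing $g_p(x)=\frac{1}{p-1}\sum_{d\in D}e^{2\pi i d x}$ for the digit-averaging factor, one has $\hat{\mu}_p(\xi)=\prod_{j\geq 1} g_p(\xi/p^{j})$. The key observation is that $|g_p(t)|$ is bounded away from $1$ unless $t$ is close to an integer; quantitatively, $|g_p(t)|\le 1-c\,\|t\|^2$ for $\|t\|$ not too large, and $|g_p(t)|$ is genuinely small (say $\le 1-c$) on a fixed proportion of the circle. Hence $\hat{\mu}_p(\xi)$ can be large only if $\xi/p^j$ is close to an integer for \emph{many} scales $j$ simultaneously, i.e. only if the base-$p$ digit expansion of $\xi$ is highly structured. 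This is exactly where the Erdős–Kahane type argument advertised in the abstract enters: counting integers $\xi\le N$ whose base-$p$ digits force $\prod_j |g_p(\xi/p^j)|\geq N^{-\Delta}$ amounts to counting digit strings in which "bad" (non-small) factors occur all but a $\lambda$-proportion of the time, and a combinatorial/entropy count shows there are at most $N^{1-\lambda}$ such $\xi$ once $p$ is large enough that the small-factor event has probability close to $1$. Taking $p$ large makes the fraction of $t$ with $|g_p(t)|$ bounded away from $1$ tend to $1$, which drives $\lambda$ up and lets us simultaneously keep $\Delta$ above $1/2$.

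I expect the delicate point to be making the Erdős–Kahane bookkeeping uniform and explicit enough to extract a numerical $\lambda$ and to pin down the trade-off between $\Delta$ and $\lambda$ so that the final inequality $\dim_{ST}\mu_p=\sup\min\{\Delta,(\lambda+\dim_{l^2}\mu_p)/2\}>1/2$ holds; in particular one must be careful that the scales $j$ at which $\xi/p^j$ is near an integer really do contribute independently to the product and are not all clustered near the top of the expansion. Once this counting lemma is in hand, spectrality is immediate from the definition, thickness follows by choosing $p$ large relative to $\lambda$ as above, and $1/2<\Haus\mu_p<1$ was already noted; this proves the theorem. (In the body of the paper this will be carried out for the explicit values $p=15$ and $p=10^{7}+1$ quoted in Theorem \ref{Main}, with the numerics handled as described there.)
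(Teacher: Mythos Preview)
Your proposal is correct and follows essentially the same route as the paper: take a $p$-adic missing-digits measure with one missing digit, use the Riesz-product formula for $\hat\mu_p$, run an Erd\H{o}s--Kahane argument on the base-$p$ digit expansion of $\xi$ to show that only $\ll N^{1-\lambda}$ integers $\xi\le N$ fail $|\hat\mu_p(\xi)|\le|\xi|^{-\Delta}$, and then verify $\Delta>1/2$ and $\dim_{l^2}\mu_p+\lambda>1$ for $p$ large.

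One quantitative point is worth correcting, since you flagged the bookkeeping as the delicate step. In the paper's parametrization (and in any version of the argument), the exceptional-set exponent comes out as $\lambda\asymp 1/\log p$, so $\lambda$ \emph{decreases} as $p\to\infty$; it is not driven up. The reason thickness still holds is a rate comparison: $1-\Haus\mu_p=1-\log(p-1)/\log p\asymp 1/(p\log p)$, which goes to zero strictly faster than $\lambda$. Thus $\Haus\mu_p+\lambda-1\asymp 1/\log p>0$ for large $p$. Your phrasing ``secure any fixed positive $\lambda$ \dots\ provided $p$ is large enough that $\Haus\mu_p>1-\lambda$'' is circular because $\lambda$ itself depends on $p$; the correct statement is the rate comparison above. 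With that adjustment, your outline matches the paper's Section~\ref{Simple} exactly (the paper makes this explicit by fixing $\epsilon=0.01$, $\sigma=0.97$ and computing $\lambda=D(\sigma\|1-2\epsilon)/\log p$, $\Delta=\sigma\log(\epsilon(p-1))/\log p$).
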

The Lebesgue measure on $[0,1]$ is self-similar, spectral and thick. Thus the above result says that there are other interesting spectral and thick measures. Without the thickness condition, the spectral condition for missing digits measures is not really too restrictive. In fact, we will provide a very simple-to-check condition in Theorem \ref{Spectral Measures}.  For now, we only provide the following examples.
\begin{exm}
The middle-third Cantor measure is spectral. The middle-$15$th Cantor measure is spectral and thick. There are self-similar measures with GCP(1) whose Hausdorff dimensions are larger than but close to $1/2.$
\end{exm}
In fact, for AD-regular measures $\mu$, it is possible to show that spectral and thick properties imply that $\dim_{l^2}\mu=\Haus \mu>1/2.$ On the other hand, it is possible to show that there are self-similar measures with Hausdorff dimension smaller than $1/2$ which do not have GCP(1) and thus cannot be spectral and thick, see Section \ref{GCT}. We suspect the following conjecture (which replaces the $l^1$-dimension in the statement of Theorem \ref{GCPtoKS} with the Hausdorff dimension) could be true.
\begin{conj}
Let $\mu$ be a missing digits measure on $\mathbb{R}$ with $\Haus \mu>1/2.$ Then $\mu$  has GCP(1). More generally, the above conclusion holds for self-similar measures $\mu$ with the open set condition and $\Haus \mu>1/2.$
\end{conj}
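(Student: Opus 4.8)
The plan is to derive the conjecture from Theorem \ref{GCPtoKS}, so that everything reduces to the Fourier-analytic claim that $\dim_{l^1}\mu>1/2$ whenever $\mu$ is a missing digits measure (or a self-similar measure with the open set condition) of Hausdorff dimension larger than $1/2$. By Lemma \ref{LL^1}(2), together with the fact that $\dim_{l^2}\mu=\Haus\mu$ for these measures (see Section \ref{Preliminaries}), it is enough to show that such a $\mu$ is spectral and thick in the sense of Definition \ref{DST}, i.e. that $\dim_{ST}\mu>1/2$. Concretely, one must produce an exponent $\Delta>1/2$ and a gain $\lambda>0$ with $\lambda+\Haus\mu>1$ such that
\[
\#\{\xi\in\mathbb Z\cap[0,N]:\ \hat\mu(\xi)\ge |\xi|^{-\Delta}\}\ll N^{1-\lambda}.
\]

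First I would treat the missing digits case $\mu=\mu_{p,D}$, where the Fourier coefficients factor as a Riesz product $\hat\mu(\xi)=\prod_{j\ge1}g(\xi p^{-j})$ with $g(t)=\frac{1}{|D|}\sum_{d\in D}e^{2\pi i d t}$, so that $\log|\hat\mu(\xi)|$ is a Birkhoff-type sum over the base-$p$ digits of $\xi$. The frequencies at which $|\hat\mu(\xi)|$ is abnormally large are exactly those whose digit strings keep the factors $|g(\xi p^{-j})|$ away from the zero set of $g$; an Erd\H{o}s--Kahane type argument shows that being forced into such strings for a positive proportion of digits confines $\xi$ to an exponentially sparse set, and making this quantitative produces an explicit admissible pair $(\Delta,\lambda)=(\Delta(p,D),\lambda(p,D))$. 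One then has to verify the threshold inequalities $\Delta>1/2$ and $\lambda>1-\Haus\mu$; this is precisely the mechanism the paper already uses for $K_{15}$ and $K_{10^7+1}$, so proving the conjecture amounts to running the same argument all the way down to $\Haus\mu>1/2$.

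The step I expect to be the main obstacle, and the reason the statement is only conjectural, is that the Erd\H{o}s--Kahane counting is lossy near the critical value $\Haus\mu=1/2$: the trade-off it gives between the size exponent $\Delta$ and the sparsity gain $\lambda$ degenerates as $\Haus\mu\downarrow 1/2$, so one cannot keep both $\Delta>1/2$ and $\lambda>1-\Haus\mu$. Removing this loss seems to require either a sharper combinatorial description of the resonant digit patterns---ideally a variational identification of the optimal $(\Delta,\lambda)$ attached to $(p,D)$, replacing the crude counting by the exact exponential growth rate of the resonant set---or a route to GCP(1) that does not pass through $\dim_{l^1}$ at all. A plausible version of the latter is to attack $\sum_{q=Q}^{2Q}\mu(A(\delta,q,\gamma))\asymp Q\delta$ directly: smoothing $A(\delta,q,\gamma)$ by a Fej\'er kernel turns $\mu(A(\delta,q,\gamma))$ into a main term of size $\asymp\delta$ plus a sum over $\xi\ne0$ weighted by $\hat\mu(q\xi)$, and the averaging over the denominators $q\in[Q,2Q]$ should disperse the few large values of $\hat\mu$, which are highly structured (governed by base-$p$ digits) and therefore rarely align with a progression of modulus $q$; this suggests one can get away with a hypothesis on $\mu$ weaker than $\dim_{l^1}\mu>1/2$.

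For a general self-similar measure with the open set condition the Riesz product is unavailable, and I would instead encode $\hat\mu$ via the associated transfer operator and a renewal argument, with compositions of the contractions playing the role of the factors $g(\xi p^{-j})$ and the growth of the resonant set governed by a topological-pressure quantity; the Erd\H{o}s--Kahane mechanism has a symbolic analogue here, but producing the same clean exponent bound is more delicate because $\hat\mu$ is no longer exactly self-similar. A sensible intermediate step is to first handle rational contraction ratios, where $\mu$ is a finite combination of shifted missing-digits-type measures and the combinatorial model above still applies, and only then pass to irrational parameters by approximation. In all cases, once $\dim_{l^1}\mu>1/2$ is established, Lemma \ref{LL^1} and Theorem \ref{GCPtoKS} yield GCP(1), and hence also the convergence part of Conjecture \ref{KH}.
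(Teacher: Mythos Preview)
The statement you are attempting to prove is explicitly labelled a \emph{conjecture} in the paper; there is no proof to compare against, and the paper says only that the authors ``suspect [it] could be true.'' Your proposal is therefore a research strategy rather than a proof, and on that level the principal issue is that your primary reduction cannot succeed.

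Your plan is to derive GCP(1) from Theorem~\ref{GCPtoKS} by establishing $\dim_{l^1}\mu>1/2$ whenever $\Haus\mu>1/2$. But the paper remarks (in the footnote attached to the definition of $\dim_{l^1}$ in Section~\ref{Lattice}) that for the middle-third Cantor measure $\mu_3$ one has $\dim_{l^1}\mu_3<1/2$, while $\Haus\mu_3=\log 2/\log 3>1/2$. Hence the implication $\Haus\mu>1/2\Rightarrow\dim_{l^1}\mu>1/2$ is simply false for missing digits measures, and no sharpening of the Erd\H{o}s--Kahane bookkeeping can rescue it: the obstruction is not that the counting is lossy near the threshold, but that the target inequality itself fails. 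Your diagnosis of the difficulty as a technical loss in the $(\Delta,\lambda)$ trade-off is therefore a misreading of the situation; even the exact exponential growth rate of the resonant set would not give $\dim_{l^1}\mu_3>1/2$.

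This means your second suggested route---attacking $\sum_{q=Q}^{2Q}\mu(A(\delta,q,\gamma))$ directly and exploiting cancellation in the average over $q$ beyond what the pointwise $l^1$-bound captures---is not merely an alternative but the only one of your two options that could work in principle. The heuristic that the large values of $\hat\mu$ are arithmetically structured and should rarely align with a generic modulus $q$ is plausible, but neither the paper nor your sketch indicates how to make this quantitative enough to handle $\mu_3$; that is precisely the open content of the conjecture.
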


Next, we will consider $n\geq 2.$ The result is not as nice as for $n=1.$ We will prove the following theorem.
\begin{thm}\label{NEXI}
There are self-similar measures on $[0,1]^n$ which have GCP(n) and with Hausdorff dimension smaller than $n.$
\end{thm}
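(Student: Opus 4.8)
The plan is to deduce the theorem from the one-dimensional results already established (Theorems~\ref{GCPtoKS} and~\ref{oneEXI}) by a product construction. For $n=1$ there is nothing to do: a thick spectral self-similar measure with Hausdorff dimension strictly between $1/2$ and $1$ has $\dim_{l^1}>1/2$ by Lemma~\ref{LL^1}, hence GCP(1) by Theorem~\ref{GCPtoKS}. So assume $n\ge 2$. It is exactly this product construction, together with the fact that it only produces examples of product type, that makes the $n$-dimensional situation less satisfactory than the one-dimensional one.

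First I would fix an integer $p\ge 2$ and a self-similar measure $\mu_1$ on $[0,1]$, coming from an iterated function system with common contraction ratio $1/p$ and the open set condition, chosen so that $\dim_{l^1}\mu_1>1/2$ and $1/2<\Haus\mu_1<1$; such a $\mu_1$ exists by Theorems~\ref{oneEXI} and~\ref{GCPtoKS} — concretely one may take $p=15$ and $\mu_1=\mu_{15}$, so that $\Haus\mu_1=\log 14/\log 15<1$, or a self-similar measure with GCP(1) and $\Haus\mu_1$ close to $1/2$. Let $\mathcal{L}^{n-1}$ denote Lebesgue measure on $[0,1]^{n-1}$, regarded as the self-similar measure of the iterated function system subdividing $[0,1]^{n-1}$ into $p^{n-1}$ congruent cubes of side $1/p$ with equal weights. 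The product of these two iterated function systems is itself a self-similar iterated function system on $[0,1]^n=[0,1]\times[0,1]^{n-1}$, consisting of homotheties of ratio $1/p$, and it still satisfies the open set condition; call its invariant measure
\[
\mu:=\mu_1\times\mathcal{L}^{n-1},
\]
a self-similar measure on $[0,1]^n$. The standard dimension formula for self-similar measures with the open set condition then gives $\Haus\mu=\Haus\mu_1+(n-1)$, which is strictly smaller than $n$. So the theorem follows once we check that $\mu$ has GCP(n).

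For this I would work with the supremum norm for the distance to $\mathbb{Z}^n$, which is legitimate since all norms on $\mathbb{R}^n$ are bi-Lipschitz equivalent while the constants in GCP(n) may depend on $n$. With this choice the set $A(\delta,q,\gamma)$, where $\gamma=(\gamma_1,\dots,\gamma_n)$, is exactly the product $\prod_{i=1}^{n}A(\delta,q,\gamma_i)$ of the one-dimensional sets $A(\delta,q,\gamma_i)$. A one-line substitution $y=qx$, using that $q$ is an integer and that $\|\,\cdot\,\|$ is $1$-periodic, shows $\mathcal{L}^1\bigl([0,1]\cap A(\delta,q,\gamma_i)\bigr)=\min(1,2\delta)$; hence, since $\mu$ is a product measure and $\mu_1$ lives on $[0,1]$,
\[
\mu\bigl(A(\delta,q,\gamma)\bigr)=\mu_1\bigl(A(\delta,q,\gamma_1)\bigr)\cdot\bigl(\min(1,2\delta)\bigr)^{n-1}
\]
for every integer $q\ge 1$. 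Summing over $q\in[Q,2Q]$ and using $\bigl(\min(1,2\delta)\bigr)^{n-1}\asymp\delta^{n-1}$ for $\delta\in(0,1)$,
\[
\sum_{q=Q}^{2Q}\mu\bigl(A(\delta,q,\gamma)\bigr)\asymp\delta^{n-1}\sum_{q=Q}^{2Q}\mu_1\bigl(A(\delta,q,\gamma_1)\bigr).
\]
Now choose any $\alpha$ with $1/n<\alpha<1$ (possible since $n\ge 2$). Because $\mu_1$ has GCP(1), its good counting threshold exceeds $1$, so $\alpha$ lies below it, and Theorem~\ref{GCPtoKS} gives $\sum_{q=Q}^{2Q}\mu_1(A(\delta,q,\gamma_1))\asymp Q\delta$ for all $\delta\gg Q^{-\alpha}$, with implied constants uniform in $\gamma_1$. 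Combining the two displays yields $\sum_{q=Q}^{2Q}\mu(A(\delta,q,\gamma))\asymp Q\delta^n$ for $\delta\gg Q^{-\alpha}$, uniformly in $\gamma$; since $\alpha>1/n$, this is exactly GCP(n).

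There is no serious analytic difficulty in this plan; the points needing attention are structural. The contraction ratio of $\mu_1$ must be the reciprocal of an integer, so that Lebesgue measure on the complementary cube is genuinely self-similar with the matching ratio and the product is a bona fide self-similar iterated function system — one should therefore make sure the building blocks supplied by Theorem~\ref{oneEXI} can be chosen with such a uniform ratio, which is clear for the missing digits examples such as $\mu_{15}$. One also sees from the argument why it is Lebesgue measure, and not a second copy of $\mu_1$, that fills the remaining $n-1$ coordinates: Lebesgue measure has the exact count $\mathcal{L}^1([0,1]\cap A(\delta,q,\gamma))=\min(1,2\delta)$ for every single $q$, whereas GCP(1) only supplies the corresponding count for $\mu_1$ after averaging over $q$, and those averages do not multiply cleanly. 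The genuine limitation — worth recording, but, I expect, not removable by this method — is exactly that it yields only product measures: I do not see how to certify GCP(n) for an ``irreducibly $n$-dimensional'' self-similar measure, and this is the precise sense in which dimension $n\ge 2$ is less clean than dimension one.
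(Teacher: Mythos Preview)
Your argument is correct, and it takes a genuinely different route from the paper. The paper does not use a product construction at all. Instead, Section~\ref{ND} works directly with $p$-adic missing digits measures on $\mathbb{R}^n$ obtained by deleting $k$ digits from the full set $\{0,\dots,p-1\}^n$ (for example one digit out of $p^n$). It analyses their Fourier transform via an $n$-dimensional Erd\H{o}s--Kahane argument: if at least one coordinate of $\xi$ has a ``good'' base-$p$ expansion, then $|\hat\mu(\xi)|\le|\xi|^{-\Delta'}$ with $\Delta'>n/(n+1)$; the remaining ``all-bad'' frequencies are controlled by Cauchy--Schwarz and an $l^2$ bound, leading to the conditions $\Haus\mu>n-\lambda$ and $\Haus\mu>n^2/(n+1)$ for GCP($n$).

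What each approach buys is clear. Your reduction $\mu=\mu_1\times\mathcal L^{n-1}$ is far more elementary: it needs no new Fourier analysis in dimension $n$, and it even inherits a good counting threshold larger than $1$ (much stronger than the bare $>1/n$ the paper obtains). Your examples are also missing digits measures on $\mathbb{R}^n$, but of a very special product type, with digit set $D\times\{0,\dots,p-1\}^{n-1}$. The paper's method, by contrast, certifies GCP($n$) for irreducibly $n$-dimensional measures such as ``remove one digit from $\{0,\dots,p-1\}^n$'', which are not products and which your argument cannot reach --- this is exactly the limitation you flag at the end. So both arguments prove the theorem as stated; the paper's is harder and yields a broader class of examples, yours is shorter and gives a better threshold on the examples it does produce.
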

We will provide explicit sufficient conditions for a self-similar measure to have GCP(n). We give the following examples for now. Again, see Section \ref{Missing Digits Measures} for the definition of missing digits measures.
\begin{exm}
For each $n\geq 1,$ there is a number $P(n)\geq 3$ such that $p\geq P(n)$ implies that $p$-adic missing digits measures on $\mathbb{R}^n$ with one missing digit have GCP(n).
\end{exm}
We have not made explicit computations for $P(n).$ For example, we are not able to test whether the Cantor-Lebesgue of the Sierpinski carpet \footnote{The Sierpinski carpet is a missing digits set on $[0,1]^2$ with base $3$ and one missing digit $(1,1)$.} has GCP(2). The self-similar measures with GCPs we provide in this paper are very special. Most of them have an integer reciprocal contraction ratio and integer translations. This is not essential. In Section \ref{General scaling ratios}, we will provide some examples without rational contraction ratios.
\subsection{Further comments}
\subsubsection{$l^1$-dimension}\label{L^1}
For a Borel measure $\mu$ on $[0,1]^n$ we defined its $l^1$-dimension ($\dim_{l^1}$) to be the supreme of $s$ such that
\[
\sum_{|\xi|\leq R} |\hat{\mu}(\xi)|\ll R^{n-s}.
\] 
By Cauchy-Schwarz, we see that ($n=1$)
\[
\sum_{|\xi|\leq R} |\hat{\mu}(\xi)|^2\leq\sum_{|\xi|\leq R} |\hat{\mu}(\xi)|\leq R^{1/2} R^{(1-\dim_{l^2} s)/2}.
\]
Thus, we see that in general,
\[
\dim_{l^2}\mu\geq \dim_{l^1}\mu\geq \frac{\dim_{l^2}\mu}{2}.
\]
This proves (1) of Lemma \ref{LL^1}. It is possible that the above inequality is strict. However, it seems to be difficult to compute the $l^1$-dimensions. An interesting task is thus to determine $\dim_{l^1}\mu$ for the middle-third Cantor measure $\mu.$

For (3) of Lemma \ref{LL^1}, observe that if $\dim_{l^1}\mu>1/2,$ then we can find a number $s>1/2$ such that
\[
\sum_{|\xi|\leq R} |\hat{\mu}(\xi)|\ll R^{1-s}.
\]
We see that for each number $\Delta>0,$ we have the following estimate
\[
\#\{\xi: |\xi|\leq R, |\hat{\mu}(\xi)|\geq |\xi|^{-\Delta}\}\ll R^{1-s+\Delta}.
\]
As $1-s<1/2,$ it is possible to let $\Delta>1/2$ and $1-s+\Delta<1.$ This shows that $\mu$ is spectral and proves (3) of Lemma \ref{LL^1}.
\subsubsection{good counting thresholds}\label{GCT}
We saw that good counting property (GCP) is enough for deducing a Khinchine convergence type result. It is interesting to dig further in this direction and ask what is the good counting thresholds for self-similar measures. The $l^1$-dimension is also useful here.  In fact, it is possible to see that the good counting threshold for $\mu$ is at least $\dim_{l^1}\mu/(1-\dim_{l^1}\mu)$ if $\dim_{l^1}\mu\in (0,1).$ Thus if $\dim_{l^1}\mu$ is close to one, then the good counting threshold for $\mu$ can be very large. Intuitively, the larger the threshold is, the better behaves the measure $\mu$ in terms of lattice counting. On one extreme, observe that the good counting threshold for the Lebesgue measure is $\infty.$ On the other hand, suppose that $\mu$ is AD-regular with Hausdorff dimension $s.$ Then for small enough $r>0,$ any $r$-ball has $\mu$ measures $\gg r^s.$ Suppose that $\mu$ has GCP(1), then there is an $\alpha>1$ so that we have as $Q\to\infty,$
\[
\sum_{q=Q}^{2Q}\mu(A(\delta_Q,q,0))\ll Q\delta_Q
\]
as long as $\delta_Q\gg Q^{-\alpha}.$ Suppose that $0\in supp(\mu).$ Then we see that (the interval around $0$ is included in each $A(\delta_Q,q,0)$)
\[
\sum_{q=Q}^{2Q}\mu(A(\delta_Q,q,0))\gg Q \left(\frac{\delta_Q}{Q}\right)^s.
\]
Thus one cannot have $\alpha>s/(1-s).$ This shows that if $s\leq 1/2,$ then $\mu$ does not have GCP(1) in the case when $0$ is in the support of $\mu.$
\subsubsection{Khinchine convergence type}
It is strange that for the convergence result (Theorem \ref{Main}(2)) one needs the approximation function $\psi$ to be monotonic. Indeed, for many situations in Diophantine approximation, the convergence result follows directly from the convergence Borel-Cantelli lemma and usually the monotonicity of the approximation function is not essential. However, we note here that the convergence results in this paper rely heavily on the monotonicity of the approximation function. In fact, if we drop the monotonicity condition, then we can have an approximation function which is supported on a small set. For example, let $\psi$ be supported on $2^n.$ Let $\mu$ be the middle-third Cantor measure. The following conjecture of Velani was formulated in \cite[Conjecture 1.2]{ACY}.
\begin{conj}[Velani]
If $\{\psi(2^n),n\geq 1\}$ is non-increasing and $\sum_n \psi(2^n)<\infty,$ then for $\mu$.a.e $x\in supp(\mu),$
\[
\|2^nx\|< \psi(2^n)
\] 
at most finitely often.
\end{conj}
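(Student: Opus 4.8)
The plan is to deduce the statement from the convergence Borel--Cantelli lemma, so that the whole problem becomes the estimate $\sum_{n\ge1}\mu\bigl(A(\psi(2^n),2^n,0)\bigr)<\infty$, where $A(\delta,Q,0)=\{x:\|Qx\|\le\delta\}$ as in Section \ref{Lattice}. Since $\{\psi(2^n)\}$ is non-increasing and summable (hence $\psi(2^n)\to0$), and enlarging the approximating function only enlarges the limit superior, one may first replace $\psi(2^n)$ by $\max\{\psi(2^n),2^{-\alpha n}\}$ for a fixed large $\alpha>1$; this is still non-increasing and summable, so we may assume $\psi(2^n)\ge 2^{-\alpha n}$. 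Writing $e(t)=e^{2\pi i t}$ and letting $f_n$ be the indicator of the $\psi(2^n)$-neighbourhood of $0$ on $\mathbb R/\mathbb Z$, one has $\mathbbm 1_{A(\psi(2^n),2^n,0)}(x)=f_n(2^nx)$, and expanding $f_n$ in Fourier series and integrating against $\mu$ gives
\[
\mu\bigl(A(\psi(2^n),2^n,0)\bigr)=2\psi(2^n)+E_n,\qquad E_n=\sum_{k\ne0}\widehat{f_n}(k)\,\widehat\mu(-k\,2^n),
\]
with $|\widehat{f_n}(k)|\le\min\{2\psi(2^n),(\pi|k|)^{-1}\}$. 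The main terms contribute $2\sum_n\psi(2^n)<\infty$, and as all summands $\mu(A(\psi(2^n),2^n,0))$ are nonnegative, the task is reduced to showing that $\sum_n E_n$ converges.

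Next I would exploit the monotonicity of $\{\psi(2^n)\}$ by summing by parts in $n$ for each fixed frequency $k$. The point is that $\widehat{f_n}(k)=\frac{\sin(2\pi k\psi(2^n))}{\pi k}$ has the form $h_k(\psi(2^n))$ with $\|h_k'\|_\infty\le1$, so $\sum_n|\widehat{f_n}(k)-\widehat{f_{n+1}}(k)|\le\psi(2)$ and, more usefully, $\sum_n n\,|\widehat{f_n}(k)-\widehat{f_{n+1}}(k)|\le\sum_n n\,|\psi(2^n)-\psi(2^{n+1})|\le\sum_n\psi(2^n)$, uniformly in $k$. Abel summation then bounds $\bigl|\sum_n\widehat{f_n}(k)\widehat\mu(-k2^n)\bigr|$ in terms of the averaged exponential sums $B_N(k)=\sum_{n=1}^N\widehat\mu(-k2^n)=\int\sum_{n=1}^N e(k2^nx)\,d\mu(x)$. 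Here the crucial input is that $\mu=\mu_3$ is invariant and ergodic for $x\mapsto3x\bmod1$ with entropy $\log2>0$, so by Host's theorem $\{2^nx\}$ equidistributes modulo $1$ for $\mu_3$-a.e.\ $x$; hence $B_N(k)=o(N)$ for every fixed $k\ne0$ by dominated convergence. Feeding this, together with the decay $|\widehat{f_n}(k)|\ll|k|^{-1}$, into the Abel summation should make the series $\sum_{k\ne0}\bigl|\sum_n\widehat{f_n}(k)\widehat\mu(-k2^n)\bigr|$ converge, and this series majorises $\sum_n E_n$ after rearrangement.

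\textbf{The main obstacle} is precisely making that bound \emph{summable over~$k$}. The soft equidistribution coming from Host's theorem yields $B_N(k)=o_k(N)$ with no control of the rate in $k$, while the $k$-decay supplied by the indicator ($|\widehat{f_n}(k)|\ll|k|^{-1}$) is exactly borderline; what one really needs is an effective bound of the shape $\sum_{0<|k|\le R}|\widehat\mu(k\,2^n)|\ll R^{1-\eta}$ uniformly in $n$, i.e.\ that the push-forwards $(\times2^n)_*\mu$ have $l^1$-dimension bounded below away from~$0$, uniformly. For missing digits measures with base $p$ coprime to~$2$ this should again be accessible to the Erd\H{o}s--Kahane analysis of this paper --- since $p^i\mid k2^n$ only when $p^i\mid k$, the resonances of $\widehat\mu$ are hit by only a sparse set of~$k$ --- and when $\dim_{l^1}\mu$ is large enough (e.g.\ $\mu_{15}$) one gets $\eta>0$ big enough to close the argument. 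For the middle-third Cantor measure itself, however, one has $\dim_{l^1}\mu_3<1/2$, the exponent $\eta$ produced this way is too small to beat the main term, and the conjecture remains out of reach of the present techniques --- in line with the paper's standing remark that they do not apply to $K_3$; resolving it would seem to require a genuinely quantitative form of equidistribution of the $\times2$-orbit under the $\times3$-invariant measure~$\mu_3$.
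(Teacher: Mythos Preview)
This statement is Velani's conjecture, which the paper explicitly presents as \emph{open}: right after stating it the paper says ``the above conjecture which is currently open.'' There is therefore no proof in the paper to compare against. What the paper does supply is the partial result of Allen--Chow--Yu quoted immediately afterwards, which needs the stronger convergence hypothesis $\sum_n\bigl(2^{-\log n/(\log\log n\,\log\log\log n)}\psi(2^n)^\kappa+\psi(2^n)\bigr)<\infty$.

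Your proposal is, appropriately, not a proof but an outline that ends by conceding the conjecture is out of reach; that conclusion is correct and in line with the paper. Two further remarks on the outline itself. First, the Abel summation step does not close even formally with only the qualitative input $B_N(k)=o_k(N)$ from Host's theorem: summation by parts turns $\sum_n \widehat{f_n}(k)\widehat\mu(-k2^n)$ into a boundary term plus $\sum_N (\widehat{f_N}(k)-\widehat{f_{N+1}}(k))B_N(k)$, and without a rate in $N$ you cannot bound this by anything summable in $k$, since the only $k$-decay available is $|k|^{-1}$. So the obstacle is not only the summation over $k$ but already the single-$k$ estimate. Second, your suggested remedy for thicker measures --- a uniform-in-$n$ bound $\sum_{0<|k|\le R}|\widehat\mu(k\,2^n)|\ll R^{1-\eta}$ --- is not something the paper proves, and it is not a consequence of $\dim_{l^1}\mu>1/2$: the paper's convergence machinery (Theorem~\ref{Good Counting to Metric}, Section~\ref{LCST}) works precisely because one \emph{averages} over $q\in[Q,2Q]$, converting $\sum_{q\mid\xi}$ into $\sum_\xi d(\xi)$; along the lacunary sequence $q=2^n$ that averaging is unavailable, and controlling $|\widehat\mu(k2^n)|$ requires understanding how multiplication by $2^n$ acts on base-$p$ digits, which is genuinely different from the Erd\H{o}s--Kahane analysis carried out here. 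This is exactly why the paper singles out Velani's conjecture as a consequence that would follow from a monotonicity-free convergence theorem, but does not claim it.
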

A  convergence result for $\mu$ without the monotonicity condition will in fact prove the above conjecture which is currently open. The following result was proved in \cite{ACY}. Here $\kappa=\Haus \mu.$
\begin{thm}[Allen-Chow-Yu]
If $\sum_{n} (2^{-\log n/(\log\log n\times\log\log\log n)}\psi(2^n)^\kappa+\psi(2^n))<\infty$, then then for $\mu$.a.e $x\in supp(\mu),$
\[
\|2^nx\|< \psi(2^n)
\]
at most finitely often.
\end{thm}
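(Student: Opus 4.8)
The plan is to apply the convergence Borel--Cantelli lemma to the sets
\[
E_n=\{x\in\mathbb{R}:\ \|2^nx\|<\psi(2^n)\},\qquad n\ge 1,
\]
so that it suffices to prove the single-scale estimate
\[
\mu(E_n)\ \ll\ \psi(2^n)\ +\ f(n)\,\psi(2^n)^{\kappa},\qquad f(n):=2^{-\log n/(\log\log n\cdot\log\log\log n)};
\]
summing over $n$ and invoking the hypothesis $\sum_n\big(f(n)\psi(2^n)^{\kappa}+\psi(2^n)\big)<\infty$ then gives $\sum_n\mu(E_n)<\infty$, hence $\mu(\limsup_n E_n)=0$, which is the assertion. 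Here $E_n$ is a union of $2^n$ intervals of radius $\psi(2^n)/2^n$ about the dyadic rationals $k/2^n$, so its Lebesgue measure is $\asymp\psi(2^n)$: the first term is the ``expected'' contribution, and the real work is (i) controlling the excess of $\mu(E_n)$ over it by $f(n)\,\psi(2^n)^{\kappa}$ and (ii) obtaining the slowly decaying factor $f(n)\to 0$ in that correction. A cruder correction of size $\psi(2^n)^{\kappa}$ — all one extracts directly from the Ahlfors--David regularity of $\mu$ together with a trivial count of dyadic rationals near the Cantor set — would only prove a convergence statement under the stronger hypothesis $\sum_n\psi(2^n)^{\kappa}<\infty$.

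To bound $\mu(E_n)$ I would pass to Fourier series. Fix a smooth nonnegative $2^{-n}$-periodic majorant $\chi_n\ge\mathbf{1}_{E_n}$ whose Fourier coefficients sit at the frequencies $2^n\ell$, with $\widehat{\chi_n}(0)\asymp\psi(2^n)$ and $|\widehat{\chi_n}(2^n\ell)|\ll_N\psi(2^n)\big(1+|\ell|\psi(2^n)\big)^{-N}$ for each $N$. Then, by Parseval,
\[
\mu(E_n)\ \le\ \int\chi_n\,d\mu\ =\ \sum_{\ell\in\mathbb{Z}}\widehat{\chi_n}(2^n\ell)\,\overline{\hat\mu(2^n\ell)}\ \ll\ \psi(2^n)\Big(1+\sum_{1\le|\ell|\le L_n}|\hat\mu(2^n\ell)|\Big),\quad L_n\asymp\psi(2^n)^{-1},
\]
the tail $|\ell|>L_n$ being negligible. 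Thus everything reduces to the \emph{dilated} $\ell^1$-estimate
\[
\sum_{1\le|\ell|\le L}|\hat\mu(2^n\ell)|\ \ll\ f(n)\,L^{1-\kappa}\qquad(L\ge 1),
\]
since inserting $L=L_n$ and $\psi(2^n)\cdot L_n^{1-\kappa}\asymp\psi(2^n)^{\kappa}$ then gives the single-scale bound. The decay $f(n)$ cannot come from the size of $L$ alone — the undilated sum $\sum_{1\le|\ell|\le L}|\hat\mu(\ell)|$ is $\gg L^{1-\kappa}$, with no decay in $n$ — so it must be extracted from the factor $2^n$.

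The heart of the matter is the dilated estimate, and here multiplicative independence of $2$ and $3$ enters through an Erd\H{o}s--Kahane type argument. Using $|\hat\mu(k)|=\prod_{j\ge 1}|\cos(2\pi k\,3^{-j})|$ and $|\hat\mu(3k)|=|\hat\mu(k)|$, for $k=2^n\ell$ with $|\ell|\le L$ only the scales $j$ with $3^j\lesssim 2^nL$ matter; call such a scale \emph{resonant for $k$} if $\|k\,3^{-j}\|$ is small. On the one hand, if $k$ has at most $t$ resonant scales then $|\hat\mu(k)|$ is at most a fixed power of $1/2$ raised to the number of non-resonant scales, so $|\hat\mu(2^n\ell)|\ll (2^nL)^{-c_0}2^{c_1 t}$ for absolute constants $c_0,c_1>0$; only frequencies with \emph{many} resonant scales can fail to be tiny. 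On the other hand, a long run of resonant scales forces $k=2^n\ell$ to be $3$-adically extremely close to $0$, and carrying the factor $2^n$ this is incompatible with $|\ell|\le L$ unless $n$ is small relative to the run length; a transference/counting argument makes this quantitative in the shape
\[
\#\{\,1\le|\ell|\le L:\ 2^n\ell\ \text{has}\ \ge t\ \text{resonant scales}\,\}\ \ll\ L\,\rho^{\,t}+(\text{a term decaying geometrically in }n)
\]
for some $\rho<1$. Splitting $\sum_{1\le|\ell|\le L}|\hat\mu(2^n\ell)|$ by the exact number of resonant scales, inserting the two estimates, and optimising the cutoff $t$ as a slowly growing function of $n$ — which is precisely what produces the exponent $\log n/(\log\log n\cdot\log\log\log n)$ — yields the bound $\ll f(n)\,L^{1-\kappa}$.

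The Fourier majorant construction and the elementary product bound $|\hat\mu(k)|\le (1/2)^{\#\{\text{non-resonant }j\}}$ are routine, resting on standard facts about self-similar measures recalled in the paper. The genuine obstacle is the Erd\H{o}s--Kahane count: one must make ``$2^n\ell$ cannot be $3$-adically close to $0$ at many consecutive scales unless $\ell$ is enormous'' completely explicit, with the right joint dependence on $t$ and $n$, and then balance the exponential gain $\rho^{\,t}$ in $|\hat\mu|$ against the loss incurred by enlarging the class of frequencies with $\ge t$ resonant scales. The sub-optimal, $\log\log$-flavoured factor $f(n)$ is precisely the residue of this balancing — the price of working with $\mu_3$, whose $l^1$-dimension is conjecturally below $1/2$ — which is why the conclusion stops short of the clean Khinchine-type statement.
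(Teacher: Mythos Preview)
The paper does not contain a proof of this theorem: it is quoted from \cite{ACY} as background, with the explicit attribution ``The following result was proved in \cite{ACY}''. So there is no proof in the present paper to compare against.

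On the merits of your sketch: the Fourier-analytic reduction to the dilated $\ell^1$-sum $\sum_{1\le|\ell|\le L}|\hat\mu(2^n\ell)|$ is correct and is exactly in the spirit of Theorem~\ref{Lattice Counting} and Section~\ref{LCST} of this paper. You are also right that the gain $f(n)$ must come from the dilation $2^n$, and that multiplicative independence of $2$ and $3$ is the mechanism --- this is why the paper notes immediately after the statement that the result extends to any $p$-adic missing digits measure with $\log p/\log 2\notin\mathbb{Q}$.

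Where your outline is thin is precisely at the point that produces the specific exponent $\log n/(\log\log n\cdot\log\log\log n)$. Your description --- ``optimising the cutoff $t$ as a slowly growing function of $n$'' --- is too vague to account for this shape; a generic Erd\H{o}s--Kahane balancing argument does not naturally spit out a $\log\log\log n$. In \cite{ACY} the decay in $n$ is obtained by feeding in a quantitative Diophantine input on the ternary expansion of $2^n$ (a Stewart-type lower bound on the number of nonzero base-$3$ digits of $2^n$), and the exponent in $f(n)$ is inherited directly from the quality of that bound. Your sentence ``a long run of resonant scales forces $k=2^n\ell$ to be $3$-adically extremely close to $0$, and carrying the factor $2^n$ this is incompatible with $|\ell|\le L$'' misidentifies the obstruction: the constraint is not on the size of $\ell$ but on the ternary digit pattern of $2^n$ itself, which cannot have too many zeros by the cited number-theoretic result. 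Without naming and invoking that input, the argument as written does not close.
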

This result holds not only for the middle-third Cantor measure. In fact, it holds for any $p$-adic missing digits measure with $\log p/\log 2\not\in\mathbb{Q}.$
\begin{comment}
\subsubsection{Generalized Bernoulli convolutions}
Let $k\geq 2$ be an integer and let $\lambda\in (0,1).$ We consider the law $\nu_{\lambda,k}$ of the random sum
\[
\sum_{j\geq 1} a_j\lambda^j
\]
where $a_j,j\geq 1$ are taken to be i.i.d. random variables taking values in $0,\dots,k-1$ with equal probabilities. We pose the following conjecture.
\begin{conj}
	For each $k\geq 2,$ there is a number $c_k>0$ such that $\nu_{\lambda,k}$ has GCP(1) as long as
	\[
	\Haus \nu_{\lambda,k}\geq 1-c_k.
	\]
\end{conj}
The most interesting case of this conjecture is when $k=2.$ In Section \ref{General scaling ratios} we shall see that the above conjecture actually holds for all large enough $k.$
\end{comment}
\section{Structure of this paper}
Theorems  \ref{Good Counting to Metric}, \ref{Lattice Counting}, \ref{EK}, \ref{Spectral Measures} can be proved without using any other results in this paper. The convergence part of Theorem \ref{GCPtoKS} is proved in Section \ref{LCST} using Theorems \ref{Lattice Counting}, \ref{Good Counting to Metric}.  Theorem \ref{Jarnik1} is proved by using Theorems \ref{GCPtoKS}. Theorem \ref{oneEXI} is proved in Section \ref{Simple}. Theorem \ref{NEXI} together with the examples are proved in Section \ref{ND}. Theorems \ref{Jarnik2}, \ref{Jarnik4} are proved by using results in Section \ref{Simple}.
Theorem \ref{Main}(3) is proved with the help of Theorem \ref{Jarnik1}, \ref{Jarnik2}, \ref{Jarnik4}. Finally, the divergence part of Theorem \ref{GCPtoKS} is proved in Section \ref{divergence}. Theorems \ref{Main}, \ref{MainInhomo} are then direct consequences of the above results.

This paper can be divided naturally into two parts. The first part is from Section \ref{Fourier} to Section \ref{ND}. This part deals with lattice counting properties of self-similar measures. The key point is to prove the results in Section \ref{Lattice}. Part two consists Sections \ref{Dio}, \ref{divergence} which contain several applications of the GCP of measures to metric Diophantine approximations.

\section{Preliminaries}\label{Preliminaries}
In this section, we recall many standard notions in geometric measure theory. For more details, see \cite{Fa} and \cite{Ma1}.

\subsection{Borel-Cantelli lemma}
The following version of Borel-Cantelli lemma can be found in \cite[Proposition 2]{BDV ref}.
\begin{lma}[Borel-Cantelli]\label{Borel}
	Let $(\Omega, \mathcal{A}, m)$ be a probability space and let $E_1, E_2, \ldots \in \mathcal{A}$ be a sequence of events in $\Omega$ such that $\sum_{n=1}^{\infty}{m(E_n)} = \infty$. Then 
	\[m(\limsup_{n \to \infty}{E_n}) \geq \limsup_{Q \to \infty}{\frac{\left(\sum_{s=1}^{Q}{m(E_s)}\right)^2}{\sum_{s,t=1}^{Q}{m(E_s \cap E_t)}}}.\]
	If $\sum_{n=1}^{\infty}{m(E_n)} < \infty,$ then $m(\limsup_{n \to \infty}{E_n})=0.$
\end{lma}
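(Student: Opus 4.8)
The plan is to obtain both halves from the second moment method together with a truncation of the tail. The convergence half is just the classical first Borel--Cantelli bound: for every $N$ one has $\limsup_{n}E_n=\bigcap_{N}\bigcup_{n\ge N}E_n\subseteq\bigcup_{n\ge N}E_n$, hence by countable subadditivity $m(\limsup_n E_n)\le\sum_{n\ge N}m(E_n)$, and letting $N\to\infty$ kills the right-hand side once $\sum_n m(E_n)<\infty$. This needs no further comment.

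For the divergence half I would fix $N\ge 1$ and, for $Q>N$, introduce the nonnegative counting function $f_{N,Q}=\sum_{n=N}^{Q}\mathbbm{1}_{E_n}$ on $\Omega$, whose first two moments are $\int f_{N,Q}\,dm=S_{N,Q}:=\sum_{n=N}^{Q}m(E_n)$ and $\int f_{N,Q}^{2}\,dm=T_{N,Q}:=\sum_{N\le s,t\le Q}m(E_s\cap E_t)$. Applying the Cauchy--Schwarz inequality to $f_{N,Q}=f_{N,Q}\,\mathbbm{1}_{\{f_{N,Q}>0\}}$ gives $S_{N,Q}^{2}\le m(\{f_{N,Q}>0\})\,T_{N,Q}$; since $\{f_{N,Q}>0\}=\bigcup_{n=N}^{Q}E_n\subseteq\bigcup_{n\ge N}E_n$, letting $Q\to\infty$ yields
\[
m\Big(\bigcup_{n\ge N}E_n\Big)\ \ge\ \limsup_{Q\to\infty}\frac{S_{N,Q}^{2}}{T_{N,Q}}.
\]

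It then remains to pass from the tail sums back to the full sums $S_Q=\sum_{n=1}^{Q}m(E_n)$ and $T_Q=\sum_{1\le s,t\le Q}m(E_s\cap E_t)$. One has $T_{N,Q}\le T_Q$ because it is a subsum of nonnegative terms, while $S_{N,Q}=S_Q-S_{N-1}$; and since $\sum_n m(E_n)=\infty$ forces $S_Q\to\infty$, the ratio $(S_Q-S_{N-1})^2/S_Q^2\to 1$, so $\limsup_Q S_{N,Q}^2/T_{N,Q}\ge\limsup_Q S_Q^2/T_Q$ for every $N$. Finally $\big(\bigcup_{n\ge N}E_n\big)_{N\ge 1}$ decreases to $\limsup_n E_n$, and as $m$ is a probability measure, continuity from above gives $m(\limsup_n E_n)=\lim_{N\to\infty}m\big(\bigcup_{n\ge N}E_n\big)\ge\limsup_Q S_Q^2/T_Q$, which is the claim. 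The argument is entirely standard (and the statement is quoted from \cite[Proposition 2]{BDV ref}); the only place demanding any care is this last truncation step, where the divergence of $\sum_n m(E_n)$ is precisely what makes the head of the series negligible and lets one replace $\sum_{n=N}^{Q}$ by $\sum_{n=1}^{Q}$.
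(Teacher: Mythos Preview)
Your proof is correct and is the standard Paley--Zygmund/Cauchy--Schwarz argument for this form of the divergence Borel--Cantelli lemma. Note, however, that the paper does not supply its own proof of this lemma at all: it is stated in the Preliminaries section with a reference to \cite[Proposition~2]{BDV ref}, so there is nothing to compare against beyond observing that your argument is exactly the one that reference (and most textbooks) would give.
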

\subsection{Fourier series and transforms}
Let $n\geq 1$ be an integer. Let $\mu$ be a Borel probability measure on $[0,1]^n$. With possibly an abuse of the notation, we use $\hat{\mu}$ to denote both its Fourier series and transform. If we consider $\mu$ as being a measure on $\mathbb{R}^n/\mathbb{Z}^n$ then for $\xi\in\mathbb{Z}^n$ we have
\[
\hat{\mu}(\xi)=\int_{[0,1]^n} e^{-2\pi i (x,\xi)}d\mu(x),
\]
where $(x,\xi)$ is the standard Euclidean inner product. Now, we view $\mu$ as a measure on $\mathbb{R}^n,$ let $\xi\in\mathbb{R}^n,$ we also have the following 
\[
\hat{\mu}(\xi)=\int_{\mathbb{R}^n} e^{-2\pi i (x,\xi)}d\mu(x).
\]
Thus if $\mu$ is already supported on $[0,1]^n,$ the values $\hat{\mu}(\xi)$ for Fourier series and transform coincide.
\subsection{Hausdorff measures and Hausdorff dimension}
Let $n\geq 1$ be an integer. Let $F\subset\mathbb{R}^n$ be a Borel set. Let $g: [0,1)\to [0,\infty)$ be a continuous function such that $g(0)=0$. Then for all $\delta>0$ we define the following quantity
\[
\mathcal{H}^g_\delta(F)=\inf\left\{\sum_{i=1}^{\infty}g(\mathrm{diam} (U_i)): \bigcup_i U_i\supset F, \mathrm{diam}(U_i)<\delta\right\}.
\]
The $g$-Hausdorff measure of $F$ is
\[
\mathcal{H}^g(F)=\lim_{\delta\to 0} \mathcal{H}^g_{\delta}(F).
\]
When $g(x)=x^s$ then $\mathcal{H}^g=\mathcal{H}^s$ is the $s$-Hausdorff measure and Hausdorff dimension of $F$ is
\[
\Haus F=\inf\{s\geq 0:\mathcal{H}^s(F)=0\}=\sup\{s\geq 0: \mathcal{H}^s(F)=\infty          \}.
\]

We need two results which help us to estimate the Hausdorff dimension of a set. The first one provides an upper bound, see \cite[Lemma 2.1]{HS18}. 
\begin{lma}[Hausdorff-Cantelli Lemma]\label{HC}
Let $F\subset\mathbb{R}^n$ be a Borel set. Let $\{B_i\}_{i\geq 1}$ be a countable covering system for $F$, i.e. for each $x\in F$ and $r>0,$ there is an $i$ such that $B_i\subset B(x,r),$ the closed ball centred at $x$ with radius $r.$ Suppose that for an $s\in (0,n],$
\[
\sum_i diam(B_i)^s<\infty,
\]
then
\[
\Haus \limsup_{i\to\infty} B_i\leq s.
\]
\end{lma}
Next we have the following result for providing a lower estimate for the Hausdorff dimension, see \cite[LEMMA on page 975]{BV}.
\begin{lma}[Mass distribution principle]\label{MDP}
Let $F\subset\mathbb{R}^n$ be a Borel set. Let $\nu$ be a Borel probability measure supported on $F$, i,e, $\nu(F)=1.$ Suppose that for some $s\in (0,n],$ $\nu$ is an $s$-Frostman measure, i.e,
\[
\nu(B_r)\ll r^s
\]
for all $r$-balls $B_r$. Then $\Haus F\geq s.$
\end{lma}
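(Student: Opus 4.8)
The plan is to bound the $s$-dimensional Hausdorff content $\mathcal{H}^s_\delta(F)$ below by a fixed positive constant for all small $\delta$; since $\mathcal{H}^s(F)=\lim_{\delta\to0}\mathcal{H}^s_\delta(F)$, this immediately gives $\mathcal{H}^s(F)>0$ and hence $\Haus F\geq s$ from the definition of Hausdorff dimension.

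First I would fix a small $\delta>0$ and take an arbitrary countable cover $\{U_i\}_{i\geq1}$ of $F$ with $\mathrm{diam}(U_i)<\delta$ for all $i$. Throwing away the sets $U_i$ with $U_i\cap F=\emptyset$ leaves a cover of $F$ and only decreases $\sum_i\mathrm{diam}(U_i)^s$, so I may assume every $U_i$ meets $F$ and choose a point $x_i\in U_i\cap F$. Then $U_i\subseteq \overline{B}(x_i,\mathrm{diam}(U_i))$, and the Frostman hypothesis applied to this ball gives $\nu(U_i)\leq \nu(\overline{B}(x_i,\mathrm{diam}(U_i)))\leq C\,\mathrm{diam}(U_i)^s$, where $C$ is the implied constant in $\nu(B_r)\ll r^s$ (taking $\delta$ small enough that the hypothesis applies to all radii below $\delta$). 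Summing over $i$ and using countable subadditivity of $\nu$ together with $\nu(F)=1$,
\[
1=\nu(F)\leq \sum_{i\geq1}\nu(U_i)\leq C\sum_{i\geq1}\mathrm{diam}(U_i)^s .
\]
Since the cover was arbitrary, $\mathcal{H}^s_\delta(F)\geq 1/C$ for every sufficiently small $\delta$, so $\mathcal{H}^s(F)\geq 1/C>0$ and therefore $\Haus F\geq s$.

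There is no genuine obstacle here; the only points deserving a word are that the implied constant in $\nu(B_r)\ll r^s$ is uniform in $r$ (so one really obtains a single $C$), and that restricting to covers by small-diameter sets is harmless because only the limit $\delta\to0$ enters the definition of $\mathcal{H}^s$. An equivalent route would replace the explicit balls by a Vitali-type covering lemma, but the containment $U_i\subseteq\overline{B}(x_i,\mathrm{diam}(U_i))$ makes that detour unnecessary.
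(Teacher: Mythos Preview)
Your proof is correct and is the standard argument for the mass distribution principle. The paper does not actually give its own proof of this lemma; it simply states the result and cites \cite[LEMMA on page 975]{BV}. Your argument is essentially the textbook proof one would find in Falconer's book or in the cited reference, so there is nothing to compare.
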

\begin{comment}
\subsection{The upper box dimension and packing dimension}
Let $F\subset\mathbb{R}^n$ be a Borel set. Let $\delta>0$ and $N_{\delta}(F)$ be the minimum amount of $\delta$-balls needed to cover $F.$ Then the upper box dimension of $F$ is
\[
\ubox F=\limsup_{\delta\to\infty} \frac{-\log N_\delta(F)}{\log \delta}.
\]
The packing dimension of $F$ is defined to be
\[
\dim_{\mathrm{P}} F=\inf\left\{\sup_i \{\ubox F_i\}: F\subset \bigcup_i F_i\right\}.
\]
In general, $\dim_{\mathrm{P}} F\geq \Haus F.$ Just like the Hausdorff dimension, the packing dimension is countably stable as well, i.e. for any countable collection of Borel sets,
\[
\dim_{\mathrm{P}} (\cup_i F_i)=\sup_i \{\dim_{\mathrm{P}} F_i\}.
\]
It is also possible to define the packing dimension via packing measures, just like what we did for defining the Hausdorff dimension. However, we decide to take the above more straightforward definition. 
\end{comment}
\subsection{Self-similar sets (without rotations), measures, open set condition}
Let $n\geq 1$ be an integer. Let $l\geq 2$ be an integer. Let $O_1,\dots,O_l\in SO(n)$, $t_1,\dots,t_l\in\mathbb{R}^n$ and $r_1,\dots,r_l\in (0,1)$ be given. Let $p_1,\dots,p_l$ be positive numbers with $\sum_{i=1}^l p_i=1.$ For $i\in\{1,\dots,l\}$, let $f_i$ be the map
\[
x\in\mathbb{R}^n\to f_i(x)=r_iO_i(x)+t_i.
\]
Then by \cite{H81}, there is a uniquely determined compact set $F$ such that
\[
\bigcup_{i=1}^l f_i(F)=F,
\]
and a uniquely determined Borel probability measure $\mu$ such that
\[
\sum_{i=1}^l p_if_i\mu=\mu
\]
where $f_i\mu$ is the pushed forwarded measure of $\mu$ by the map $f_i.$ We call $F$ and $\mu$ to be self-similar. Let $k\geq 1$ be an integer and $i_1,i_2\dots,i_k$ be a sequence of indices. Then we see that $f_{i_k}\circ\dots\circ f_{i_1}\mu$ is a scaled copy of $\mu.$ We call such a scaled copy a branch of $\mu.$

We say that $F$ and $\mu$ has \emph{open set condition} (OSC) if there is a non-empty open set $K$ such that $f_i(K)\subset K$ for all $i\in\{1,\dots,l\}$ and for two distinct indices $i,j\in\{1,\dots,l\},$
\[
f_i(K)\cap f_j(K)=\emptyset.
\]

We say that $F,\mu$ are without rotations if $O_1,\dots,O_l$ are equal to the identity matrix. Without further noticing, self-similar sets, measures in this paper are assumed to be without rotations.
\subsection{Fourier coefficients of self-similar measures}
Fourier coefficients of self-similar measures have pleasant structures. In the case when the linear part are the same, we can write down the formula explicitly. Let $\mu$ be a self-similar measure in $R^n,n\geq 1$ with contraction ratio $r\in (0,1)$ and the linear parts (the rotations $O_i's$) are the identity. Suppose further that $l\geq 2,$ and the translations are $t_1,\dots t_l$, the probability vector is $(p_1,\dots,p_l).$ Then we have the following formula,
\[
\hat{\mu}(\xi)=\int_{\mathbb{R}^n}e^{-2\pi i (x,\xi)}d\mu(x)=\prod_{j=0}^{\infty} \sum_{s=1}^l p_s e^{-2\pi i (t_s,r^j \xi)}.
\]
In general, when the linear parts are not the same, one can still write down the Fourier coefficient as the expectation of a random walk, see for example \cite{VY20}.

\subsection{$l^2$-dimension and AD-regularity}
Let $n\geq 1$ and $\mu$ be a compacted supported Borel probability measure on $[0,1]^n.$ We recall that the $l^2$-dimension of $\mu$ ($\dim_{l^2}\mu$) is the supreme of all values $s>0$ such that for large enough $R>0,$
\[
\sum_{|\xi|\leq R} |\hat{\mu}(\xi)|^2 \ll R^{n-s},
\]
where $\hat{\mu}$ is the Fourier series of $\mu.$ Let $\mu$ be a self-similar measure. Then we can define the Hausdorff dimension of $\mu$ ($\Haus \mu$) to be the value 
\[
\lim_{\delta\to 0} \frac{\log \mu(B(x,\delta))}{\log \delta}
\]
for $\mu$.a.e $x.$ This value is well defined, see \cite{FH09}. It is simple to check that
\[
\dim_{l^2}\mu\leq \Haus \mu.
\]
Let $s>0.$ We say that $\mu$ is $s$-dimensional AD-regular if for all $x\in supp(\mu)$ and small enough (in a manner that does not depend on $x$) $R>0$ 
\[
cR^{s}\leq \mu(B(x,R))\leq CR^{s},
\]
where $c,C>0$ are constants.

Let $F$ be a self-similar set (constructed with the data in the previous section) with OSC.  Let $s$ be the solution of 
\[
\sum_{i=1}^l r^s_i=1.
\]
Suppose that $p_i=r^s_i.$ Then it is possible to check that the self-similar measure $\mu$ is $s$-dimensional AD-regular and
\[
\dim_{l^2}\mu=\Haus \mu=s.
\]
In general, under OSC, we have
\[
\dim_{l^2}\mu\geq \min_{i\in\{1,\dots,l\}}\left\{\frac{\log p_i}{\log r_i}\right\}.
\]
This is in general smaller than $\Haus \mu.$ For $n=1,$ we can say something more when OSC is not presented. Let $\mu$ be a self-similar measure and let $q> 1$  be an integer. We define $T(\mu,q)$ to be the unique solution of
\[
\sum_i p^{q}_ir_i^{-T(\mu,q)}=1.
\]
We write $D(\mu,q)=T(\mu,q)/(q-1)$ and let $s=\min\{\lim_{q\to\infty} D(\mu,q),1\}.$ Let $\epsilon>0.$ Then as long as $\mu$ has exponential separation condition (a condition that is weaker than OSC), $\mu(B(x,R))\ll r^{s-\epsilon}$ uniformly for $x\in\mathbb{R}$ and small enough $r>0,$ see \cite{Sh}. From here, it is possible to see that
\[
\dim_{l^2} \mu\geq s.
\]
\subsection{Missing digits sets and measures}\label{Missing Digits Measures}
Here, we introduce a special class of self-similar measures with OSC. Let $n\geq 1$ be an integer. Let $p$ be an integer. We will consider self-similar measures with scaling ratio $1/p.$ Let $1<r<p^n$ be an integer. We choose a probability vector $p_i=1/r,i\in\{1,\dots,r\}$ and integral translations $a_1,\dots,a_r\in [0,p-1]^n.$ From the above data, it is possible to uniquely construct a self-similar measure $\mu.$ Such a measure is called to be a missing digits measure. Let $K=supp(\mu).$ This self-similar set $K$ called to be a missing digits set with $\mu$ being its corresponding Cantor-Lebesgue measure. If  Let $n=1,$ and $p>1$ be an odd number. The middle-$p$th measure is the $p$-adic missing digits measure with $p-1$ translations $\{0,\dots,p-1\}\setminus\{[(p-1)/2]\}$. The Fourier transform of a missing digits measure is
\[
\hat{\mu}(\xi)=\int_{\mathbb{R}^n}e^{-2\pi i (x,\xi)}d\mu(x)=\prod_{j=0}^{\infty} \sum_{s=1}^r p_s e^{-2\pi i (a_s,p^{-j} \xi)}.
\]
Let $f(\xi)= \sum_{s=1}^r p_s e^{-2\pi i (a_s,\xi)}.$ Then we see that $f(\xi)\to 1$ as $\xi\to (0,0,\dots,0).$ Moreover, it is possible to see that there is a constant $C>0$ such that
\[
\prod_{j\geq 0} |f(p^{-j}\xi)|\leq C
\]
holds for all $\xi\in [0,1]^n.$ This shows that $\hat{\mu}(\xi)$ can be effectively estimated from above up to a multiplicative constant by computing
\[
\prod_{j=0}^{k} \sum_{s=1}^r p_s e^{-2\pi i (a_s,p^{-j} \xi)}
\]
where $k$ is the smallest integer such that $|\xi|\in [0,p^k]^n.$ The advantage of this consideration is that we only need to deal with a finite product rather than an infinite product.
\subsection{Renormalized self-similar measures}
Let $n\geq 1$ be an integer. Let $\mu$ be a self-similar measure on $\mathbb{R}^n$. One can rescale and translate $\mu$ in a way such that $(0,\dots,0)$ is one of the translations. Moreover, if it is possible to arrange that all the translations are integral, i.e., elements in $\mathbb{Z}^n,$ then we say that $\mu$ has grid structure. For example, this is the case if the translations of the original self-similar system are all rational points, i.e. elements in $\mathbb{Q}^n.$ Missing digits measures defined previously have grid structure.

Next, since it is possible to translate $\mu$ such that $\mu([0,1]^n)>0,$ We can then find a branch of $\mu$ which is supported in $[0,1]^n.$ Self-similarity of $\mu$ says that this branch $\mu'$ is a rescaled copy of $\mu.$ In this way, lattice counting properties of $\mu$ can be obtained by lattice counting properties of $\mu'.$ For example, under OSC, $\mu$ is actually a sum of disjointly supported translated copies of $\mu'.$ For this reason, we only consider self-similar measures supported on $[0,1]^n.$ 

The reason for this renormalization is that sometimes we need to consider $\mu$ as a measure on $\mathbb{R}^n/\mathbb{Z}^n$ and we can the apply the theory of Fourier series rather than that of Fourier transform. One advantage here is that it is often more convenient to deal with sums than integrals. Another advantage is that we save some energy by not introducing the theory of tempered distributions and their Fourier transforms. Of course, our method can be adapted for directly dealing with Fourier transform.
\subsection{Asymptotic symbols}
We will use the standard $\Theta, O,o$ notations as well as the standard $\ll, \gg, \asymp$ notations. Sometimes, we add subscripts to indicates the variable, for example,
\[
o_{r\to 0}(1)
\]
indicates a value changing with $r$ and tends to $0$ as $r\to 0.$ Similarly $A\asymp_{t\to\infty}B$ indicates two values $A,B$, both changing with $t$, such that
\[
\lim_{t\to\infty} |A/B|\in (0,\infty).
\]
\section{Fourier analysis: basics}\label{Fourier}
In this section, we will prove the following fundamental and standard counting estimate via Fourier analysis. Recall that, given a Borel measure $\mu$ on $[0,1]^n,$ its Fourier transform is defined as
\[
\xi\in\mathbb{R}^n\to\hat{\mu}(\xi)=\int e^{-2\pi i (x,\xi)}d\mu(x),
\]
where $(x,\xi)$ is the Euclidean inner product of $x,\xi.$ Let $\phi$ be a Schwartz function, we denote its Fourier transform as
\[
\hat{\phi}(\xi)=\int_{\mathbb{R}^n} \phi(x)e^{-2\pi i (x,\xi)}dx.
\]
Let $g$ be a smooth $\mathbb{Z}^n$ periodic function. We define its Fourier series as
\[
\xi\in\mathbb{Z}^n\to\hat{g}(\xi)=\int_{[0,1]^n} e^{-2\pi i (x,\xi)} g(x) dx.
\]
\begin{thm}\label{Lattice Counting}
Let $n\geq 1$ be an integer. Let $\mu$ be a compactly supported Borel probability measure on $[0,1]^n.$ Let $\delta\in (0,1),$ $Q,K\in\mathbb{N}, \gamma\in [0,1]^n.$ Then there are constants $c_1,c_2>0$ such that
\[
\mu(A(\delta,Q,\gamma))\leq c_1\delta^n\left( 1+O_{Q\to\infty}(\sum_{Q|\xi,\xi\neq 0,|\xi|\leq 2Q/\delta} |\overline{\hat{\mu}(\xi)}|)\right)
\]
and
\[
\mu(A(\delta,Q,\gamma))\geq c_2\delta^n \left(1+O_{Q\to\infty}(\sum_{Q|\xi,\xi\neq 0,|\xi|\leq KQ/\delta}  |\overline{\hat{\mu}(\xi)}|)\right)+O_{K\to\infty}(K^{-N}).
\]
The implied constants in the $O_{Q\to\infty}(.)$ terms are absolute and the implied constant for the $O_{K\to\infty}(.)$ term depends on $N$ only.
\end{thm}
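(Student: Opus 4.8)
The plan is to estimate $\mu(A(\delta,Q,\gamma))$ by writing it as an integral of a bump function against $\mu$ and expanding everything in Fourier series. First I would fix a smooth approximation scheme: let $\psi\colon\mathbb{R}^n\to[0,1]$ be a fixed Schwartz function (or a compactly supported smooth function) with $\psi\geq \mathbbm{1}_{[-1/2,1/2]^n}$ and $\widehat{\psi}$ supported in a ball of radius $1$ for the upper bound, and a companion $\varphi\leq \mathbbm{1}_{[-1/2,1/2]^n}$ with $\varphi\geq 0$, $\int\varphi$ close to $1$, and again controlled Fourier support, for the lower bound. Rescaling, $\psi_\delta(x)=\psi(x/\delta)$ has $\widehat{\psi_\delta}(\xi)=\delta^n\widehat{\psi}(\delta\xi)$, which is essentially supported on $|\xi|\ll 1/\delta$. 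The key observation is that the set $A(\delta,Q,\gamma)=\{x:\|Qx-\gamma\|\leq\delta\}$ is exactly the preimage under $x\mapsto Qx-\gamma$ of the $\delta$-neighbourhood of $\mathbb{Z}^n$, so the function $G(x)=\sum_{m\in\mathbb{Z}^n}\psi_\delta(Qx-\gamma-m)$ is a $\frac1Q\mathbb{Z}^n$-periodic majorant of $\mathbbm{1}_{A(\delta,Q,\gamma)}$, and its Fourier series is supported on frequencies $Q\xi$, $\xi\in\mathbb{Z}^n$.

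The next step is to compute the Fourier series of $G$ by Poisson summation: $G(x)=\sum_{\xi\in\mathbb{Z}^n}Q^{-n}\widehat{\psi_\delta}(\xi/Q)\, e^{2\pi i(\xi/Q, -\gamma)}\,e^{2\pi i(\xi, x)}$ — wait, more carefully, $G$ has period $\frac1Q$ in each coordinate, so its frequencies are integer multiples of $Q$; one gets $G(x)=\sum_{\xi\in\mathbb{Z}^n} a_\xi e^{2\pi i (Q\xi, x)}$ with $a_\xi = Q^{-n}\delta^n\widehat\psi(\delta\xi)\,e^{-2\pi i(\xi,\gamma)}$ up to normalization, where I will absorb all numerical constants into $c_1$. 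Then
\[
\mu(A(\delta,Q,\gamma))\leq \int G\, d\mu = \sum_{\xi\in\mathbb{Z}^n} a_\xi \overline{\widehat{\mu}(Q\xi)} = a_0 + \sum_{\xi\neq 0} a_\xi \overline{\widehat{\mu}(Q\xi)}.
\]
The main term $a_0 = c\,\delta^n$ (with $c$ depending only on $\psi$), and the error term is bounded in absolute value by $c\,\delta^n\sum_{\xi\neq 0}|\widehat\psi(\delta\xi)|\,|\widehat\mu(Q\xi)|$. Because $\widehat\psi$ is Schwartz, $|\widehat\psi(\delta\xi)|$ decays rapidly once $|\delta\xi|\gg 1$, i.e. once $|\xi|\gg 1/\delta$; truncating the sum at $|\xi|\leq 2/\delta$ (so $|Q\xi|\leq 2Q/\delta$) costs only an $O(\delta^n)$ term coming from the tail (using that $|\widehat\mu|\leq 1$ and the rapid decay gives a convergent, negligible remainder). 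Reindexing $\eta=Q\xi$ runs over nonzero multiples of $Q$ with $|\eta|\leq 2Q/\delta$, this yields the claimed upper bound
\[
\mu(A(\delta,Q,\gamma))\leq c_1\delta^n\Bigl(1+O_{Q\to\infty}\bigl(\sum_{Q\mid\xi,\ \xi\neq 0,\ |\xi|\leq 2Q/\delta}|\overline{\widehat\mu(\xi)}|\bigr)\Bigr).
\]

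For the lower bound I would run the same argument with the minorant $g(x)=\sum_{m}\varphi_\delta(Qx-\gamma-m)$, so that $\mu(A(\delta,Q,\gamma))\geq \int g\,d\mu$. Here $\varphi$ is not compactly supported in frequency in general, so to get a genuinely truncated sum I would instead choose $\varphi$ to be a smooth compactly supported function of frequency, i.e. take $\varphi = \phi * \phi^{\vee}$-type or simply a Fejér-like kernel whose Fourier transform is supported in $|\xi|\leq K$; then $\varphi_\delta$ has frequencies $|\xi|\leq K/\delta$. Such $\varphi$ can be arranged to be nonnegative with $\varphi\leq \mathbbm{1}_{[-1/2,1/2]^n}$ up to a defect of size $O(K^{-N})$ in its mass (this is the standard Beurling–Selberg / mollification trade-off: a band-limited minorant of an indicator has mass deficit controlled by the bandwidth $K$). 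Expanding as before gives $\int g\,d\mu = b_0 + \sum_{0<|\xi|\leq K/\delta} b_\xi\overline{\widehat\mu(Q\xi)}$ with $b_0 = c'\delta^n(1-O(K^{-N}))$, and after reindexing $\eta = Q\xi$ with $|\eta|\leq KQ/\delta$ one obtains
\[
\mu(A(\delta,Q,\gamma))\geq c_2\delta^n\Bigl(1+O_{Q\to\infty}\bigl(\sum_{Q\mid\xi,\ \xi\neq 0,\ |\xi|\leq KQ/\delta}|\overline{\widehat\mu(\xi)}|\bigr)\Bigr)+O_{K\to\infty}(K^{-N}).
\]
The routine bookkeeping — verifying that Poisson summation applies (everything is smooth and rapidly decaying, so this is immediate), tracking that the implied constants in the $O_{Q\to\infty}$ terms are absolute while the $K^{-N}$ constant depends only on $N$ — I would carry out but not belabour. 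I expect the only genuinely delicate point to be the construction of the band-limited nonnegative minorant $\varphi$ with mass deficit $O_N(K^{-N})$ and the verification that its negative part (if any) contributes only to the stated error terms after integrating against $\mu$; this is where one must be a little careful, since $\mu$ is an arbitrary probability measure, but it follows from the classical theory of extremal band-limited functions (or, more cheaply, from convolving the indicator with a smooth bump of width $\sim \delta/K$ and then a high power of a Fejér kernel). Everything else is the standard ``Fourier-analytic Davenport–Erdős–LeVeque / lattice point counting'' machinery.
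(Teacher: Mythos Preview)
Your overall strategy and your treatment of the upper bound are exactly what the paper does: periodize a band-limited bump whose Fourier transform is compactly supported, so that the Fourier expansion of the majorant $G$ is a genuinely finite sum over $Q\mid\xi$, $|\xi|\le 2Q/\delta$.

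For the lower bound, however, the mechanism you describe cannot be made to work as stated. A nonnegative band-limited function $\varphi$ with $\varphi\le\mathbbm{1}_{[-1/2,1/2]^n}$ must vanish identically: the two pointwise inequalities force $\varphi$ to be supported in the cube, and a nonzero band-limited function cannot have compact spatial support. If you drop nonnegativity and use the genuine Beurling--Selberg minorant, the mass deficit is only $O(K^{-1})$ (this is sharp), so you would obtain an additive error $O(\delta^n K^{-1})$ rather than the stated $O_{K\to\infty}(K^{-N})$; this happens to suffice for the applications but does not prove the theorem as written.

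The paper takes the dual route, which is what your ``cheaper'' alternative almost describes: fix once and for all a smooth bump $\phi'$ compactly supported in \emph{space} with $0\le\phi'$ and $\phi'$ supported in the unit cube (so the minorant property for the periodized sum is trivial). Then $\widehat{\phi'}$ is Schwartz but not compactly supported, so the Fourier series is infinite; one splits it at $|\xi|\le KQ/\delta$, and the tail is controlled by
\[
\delta^n\sum_{Q\mid\xi,\ |\xi|>KQ/\delta}|\widehat{\phi'}(\delta\xi/Q)|\,|\widehat\mu(\xi)|\ \le\ \delta^n\sum_{|\eta|>K/\delta}C_N(\delta|\eta|)^{-N-n}\ \ll_N\ K^{-N},
\]
using only $|\widehat\mu|\le 1$. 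This is where the $O_N(K^{-N})$ comes from: it is a Fourier-tail bound for a fixed $\phi'$, not a mass deficit of a $K$-dependent minorant. There is no need for any Fej\'er or Beurling--Selberg construction.
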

The constants $c_1, c_2$ can be both made arbitrarily close to one by choosing the bump functions in the proof delicately. However, we will not use this in this paper so we omit the proof. Later on, when we use the lower bound of the above theorem, we can effectively ignore the $K^{-N}$ term. The way to do this is to choose a small enough $\epsilon$, $N=[1/\epsilon^2]$ and let $K=\delta^{-\epsilon}.$ For our purpose, the small enough $\epsilon$ creates no significant difficulties. We will prove this theorem in the next two subsections.
\subsection{Homogeneous lattice counting}
We first treat the homogeneous case, i.e. $\gamma=(0,\dots,0)$. We write $A(\delta,Q)$ for $A(\delta,Q,(0,\dots,0)).$ Let $Q\geq 1$ be an integer and $\delta\in (0,1).$  We are interested in $\mu(A(\delta,Q)).$  Our aim is to show that
\[
\mu(A(\delta,Q))\approx \delta^n(1+\sum_{Q|\xi,\xi\neq 0,|\xi|\leq KQ/\delta}|\overline{\hat{\mu}(\xi)}|).
\]
It is quite natural to construct the function $f_{\delta,Q}$ to be the characteristic function on the union of $\delta/Q$-balls centred at $Q$-rational points in $\mathbb{R}^n$ and then compute the integral
\[
\mu(A(\delta,Q))=\int f_{\delta,Q}(x)d\mu(x).
\]
However, it is more convenient to deal with Schwartz functions. Here, we let $\phi$ be a radial, positive valued Schwartz function with the following properties\footnote{We restrict $\phi$ according to its Fourier transform. This can be achieved by choosing a Schwartz function with certain properties and take the inverse Fourier transform. See \cite[Section 6, beginning of the proof of Theorem 1.3]{Y20b} for more details.}:
\begin{itemize}
\item $\hat{\phi}$ is compactly supported and real valued with values in $[0,1].$
\item $\hat{\phi}(\xi)=1$ for $|\xi|\leq 1.$ 
\item $\hat{\phi}(\xi)=0$ for $|\xi|>2.$
\end{itemize}
There are several quantities which are of great use:
\begin{itemize}
\item $\|\phi\|_{L^1}$
\item For each integer $N\geq 1,$ there exists a constant $C_N>0$ such that $\phi(x)\leq C_N (1+|x|)^{-N}$ for all $x\in\mathbb{R}^n.$
\end{itemize}
After choosing a bump function $\phi,$ we will use it in place of the characteristic function of the unit ball around the origin. Let $r>0$ be a real number, we then construct the $r$-scaled bump function
\[
\phi_r(x)=\phi(x/r).
\]
Thus, $\phi_r$ is roughly supported on a ball of radius $\approx r.$  Now, construct the following function \[g_{\delta,Q}(x)=\sum_{\mathbf{k}\in\mathbb{Z}^n} \phi_{\delta/Q}(x-\mathbf{k}Q^{-1}).\]
Intuitively, this is a function with spikes at each rational point with denominator $Q.$ To see the above function is well defined, observe that because $\phi$ is Schwartz, for all $x\in\mathbb{R},$
\[
\sum_{\mathbf{k}\in\mathbb{Z}^n}\phi(x+\mathbf{k})<\infty.
\]
Notice that $g_{\delta,Q}$ is not a Schwartz function any more as it is not zero on all $Q$-rational number. It is in fact $\mathbb{Z}^n/Q$-periodic. It is then easy to see that for  each $x\in\mathbb{R}^n$ such that $\|Qx\|\in [0,\delta]$ we have $g_{\delta,Q}(x)\geq c$ for a constant $c>0$ depending only on $\phi.$ From here we see that
\begin{align*}
c\mu(A(\delta,Q))\leq \int  g_{\delta,Q}(x)d\mu(x)\tag{*}
\end{align*}
As $g_{\delta,Q}$ is $\mathbb{Z}^n/Q$-periodic, it is $\mathbb{Z}^n$-periodic as well. We can view it as a function on $\mathbb{R}^n/\mathbb{Z}^n.$ Since we already assumed that $\mu$ is supported on $[0,1]^n,$  using Fourier series we have,
\[
\int g_{\delta,Q}(x)d\mu(x)=\sum_{\xi\in\mathbb{Z}^n} \hat{g_{\delta,Q}}(\xi)\overline{\hat{\mu}(\xi)}.
\]
Then we have the following steps to compute $\hat{g_{\delta,Q}},$
\begin{align*}
\tag{Fourier Series}\hat{g_{\delta,Q}}(\xi)&=&\int_{[0,1]^n} e^{-2\pi i (\xi, x)}\sum_{\mathbf{k}\in\mathbb{Z}^n} \phi_{\delta/Q}(x-\mathbf{k}/Q)dx\\
&=& \chi_{Q|\xi} Q^n\hat{\phi_{\delta/Q}}(\xi)
=\chi_{Q|\xi} Q^n  (\delta/Q)^n\hat{\phi}((\delta/Q)\xi)\\
&=& \chi_{Q|\xi} \delta^n \hat{\phi}(\delta\xi/Q).
\end{align*}
Here, $\chi_{Q|\xi}=1$ when $Q|\xi$ (i.e. $Q$ divides all components of $\xi$) and $0$ otherwise. On the second line, $\hat{\phi}_{\delta/Q}$ denotes the Fourier transform. Hence, we see that
\begin{align*}
\int g_{\delta,Q}(x)d\mu(x)=\delta^n( \|\phi\|_{L^1}+O(\sum_{Q|\xi,\xi\neq 0, |\xi|\leq 2Q/\delta} |\overline{\hat{\mu}(\xi)}|))\tag{**}.
\end{align*}
The cut-off at $2Q/\delta$ is because $\hat{\phi}$ vanishes outside the ball of radius $2$ centred at the origin. From here we see that it is of great interest to estimate the sum
\[
\sum_{Q|\xi,\xi\neq 0, |\xi|\leq 2Q/\delta} |\overline{\hat{\mu}(\xi)}|.
\]
This will be one of the main tasks later on in this paper. From (*)(**) we conclude that
\begin{align*}
\mu(A(\delta,Q))\ll \delta^n (\|\phi\|_{L^1}+O(\sum_{Q|\xi,\xi\neq 0, |\xi|\leq 2Q/\delta} |\overline{\hat{\mu}(\xi)}|)).\tag{Upper}
\end{align*}
For convenience, we have set $\hat{\phi}$ to be compactly supported. This works well for (Upper). It is in principle possible to draw a lower bound for $\mu(A(\delta,Q)).$ However, for this problem,  it is better to choose a compactly supported bump function to replace $\phi$ for the above steps. The price to pay is that the $O(.)$ term in (**) is no longer a finite sum but an infinite sum of form  $\sum C(\xi)|\hat{\mu}(\xi)|$ with $C(\xi)\to 0$ rapidly. We now give more details, let $\phi'$ be a compactly supported bump function. Then $\hat{\phi'}$ is Schwartz, but not compactly supported. For any integer $N>0,$ we have
\[
\hat{\phi'}(\xi)=O(|\xi|^{-N}).
\]
We now have the following estimate
\begin{align*}
\tag{**'} \int g_{\delta,Q}(x)d\mu(x)=\delta^n( \|\phi'\|_{L^1}+O(\sum_{Q|\xi,\xi\neq 0} |\hat{\phi'}(\delta\xi/Q)| |\overline{\hat{\mu}(\xi)}|))
\end{align*}
For the sum inside the $O(.),$ we can choose a large integer $K$ (according to $\delta$) and split the sum according to $|\xi|\leq KQ/\delta$ or not.  We use the decay for $\hat{\phi'}$ to see that
\[
\sum_{Q|\xi,\xi\neq 0, |\xi|> KQ/\delta} |\overline{\hat{\mu}(\xi)}|\leq C_N\frac{1}{K^N} \frac{1}{\delta^n},
\]
where $C_N>0$ is a constant depending on $N$ and $\phi'.$ As $\phi'$ is compactly supported, it is possible to see that there is a number $M$ (depending only on $\phi'$) such that
\begin{align*}
\tag{Lower} \mu(A(M\delta,Q))\geq M^{-1}\delta^n (1+\sum_{Q|\xi,\xi\neq 0,|\xi|\leq KQ/\delta} |\hat{\phi'}(\delta\xi/Q)| |\overline{\hat{\mu}(\xi)}|)-C_N K^{-N}.
\end{align*}
Thus the theorem is proved in the case when $\gamma=(0,\dots,0).$
\subsection{Inhomogeneous lattice counting}
Let $\gamma\in\ [0,1]^n.$ We want to consider $\mu(A(\delta,Q,\gamma)).$ The Fourier analytic method is very effective for this type of problem as long as we are only using the norms of the Fourier coefficients. More explicitly, recall the computation (Fourier Series) in the previous section. We now introduce the 'shifted' function
\[g_{\delta,Q,\gamma}(x)=\sum_{\mathbf{k}\in\mathbb{Z}^n} \phi_{\delta/Q}(x-(\mathbf{k}+\gamma)Q^{-1}).\]
Then we have $|\hat{g}_{\delta,Q,\gamma}|=|\hat{g}_{\delta,Q}|$ since the shift $\gamma$ only introduce a phase factor (with unit norm) to the Fourier coefficients. Thus results (Upper) and (Lower)  obtained in the previous section hold also with the shift $\gamma.$ Thus Theorem \ref{Lattice Counting} concludes.
\section{Power Fourier decay and the $l^1$ method}
In this section, we provide some examples of measures with super counting properties (SCP). 
\subsection{Power Fourier decay and lattice counting}\label{PFD}
Let $n\geq 1$ be an integer and $\mu$ be a Borel probability measure on $[0,1]^n.$ Suppose that $|\hat\mu(\xi)|\ll |\xi|^{-\sigma}$ for some $\sigma>0.$ The supreme of all possible values for $2\sigma$ is referred to as the Fourier dimension:
\[
\mathrm{dim}_{F} \mu=\sup\{2\sigma>0: \mu(\xi)\ll |\xi|^{-\sigma}\}.
\]
We can use Theorem \ref{Lattice Counting} directly. Let $\delta\in (0,1), Q,K\in\mathbb{N}, \gamma\in [0,1]^n.$ The task is to estimate the sum
\[\sum_{Q|\xi,\xi\neq 0,|\xi|\leq KQ/\delta}  |\overline{\hat{\mu}(\xi)}|.
\]
Since $|\hat{\mu}(\xi)|\ll |\xi|^{-\sigma},$ we see that
\[
\sum_{Q|\xi,\xi\neq 0,|\xi|\leq KQ/\delta}  |\overline{\hat{\mu}(\xi)}|\ll \left(\frac{KQ}{\delta}\right)^{-\sigma} K^n\delta^{-n}=Q^{-\sigma} \left(\frac{K}{\delta}\right)^{n-\sigma}.
\]
To estimate $\mu(A(\delta,Q,\gamma))$ from above, we can choose $K=2.$ Then, as long as
\[
\delta\geq 200 \times Q^{-\sigma/(n-\sigma)},
\]
we have
\[
\mu(A(\delta,Q,\gamma))\ll \delta^n.
\]
For a lower bound, we need to choose a large enough number $N$ and choose $K=[1+\delta^{-1}]^{2n/N}.$ This will make
$K^{-N}$ to be much smaller than $\delta^n.$ Then, as long as
\[
\delta\geq 200 \times Q^{\frac{-\sigma/(n-\sigma)}{1-2n N^{-1}}},
\]
we have
\[
\mu(A(\delta,Q,\gamma))\gg \delta^n.
\]
\subsection{Examples}
Here we provide two (well known) classes of measures with super counting property.
\subsubsection{hypersurfaces with non-vanishing curvatures}
A well known class of measures with nice power Fourier decay comes from hypersurfaces with non-vanishing curvatures. More precisely, let $M$ be a hypersurface in $\mathbb{R}^n$ with non-vanishing curvature. Let $\mu$ be the restriction of the Hausdorff measure $\mathcal{H}^{n-1}$ on this manifold $M.$ If moreover $M$ is compact, we can normalize $\mu$ to be a probability measure. It then follows (see \cite[Chapter VIII, Section 3.1, Theorem 1]{Stein}) that
\[
|\hat{\mu}(\xi)|=O(|\xi|^{-(n-1)/2}).
\]
Thus we can set $\sigma=(n-1)/2$ in the previous section. As a result, as long as
\[
\delta\gg Q^{-(n-1)/(n+1)+\epsilon}
\]
for an $\epsilon>0,$ we have
\[
\mu(A(\delta,Q,\gamma))\asymp \delta^n.
\]
This result was proved in \cite{BVVZ} and improved in \cite{H20}. Huang's argument in \cite{H20} was built on this Fourier decay property together with an induction argument on scales.
\subsubsection{Brownian motions}
Let $n\geq 2.$ Let $(\Omega,\lambda)$ be a probability space. Let $B: \Omega\times\mathbb{R}\to\mathbb{R}^n$ be the $n$-dimensional Brownian motion. It is known that for any compact subset $E$ of $\mathbb{R}$, the image $B(\omega,E)$ is a Salem set for $\lambda$.a.e. $\omega\in \Omega$ whose dimension is twice the Hausdorff dimension of $E.$ We can choose $E=[0,1].$ Then $B(\omega,E)$ is a random curve which is almost surely (on the choice of $\omega$) Salem with dimension two. This is to say, almost surely, it supports a probability measure $\mu$ such that for all $\epsilon>0,$
\[
|\hat{\mu}(\xi)|=O(|\xi|^{-1+\epsilon}).
\]
The measure $\mu$ depends on the choice of $\omega.$ In what follows, we shall fix such an $\omega$ and $\mu.$ From what we have done, we see that
$
\mu(A(\delta,Q,\gamma))\asymp \delta^n
$
as long as $\delta\gg Q^{-1/(n-1)+\epsilon}$ for an $\epsilon>0$ which can be chosen to be arbitrarily small.

\section{Spectral measures and the $l^2$ method}
In this section, we shall introduce another method that allows us to deal with some measures  without power Fourier decay. We shall see that many self-similar measures fall into this class. To illustrate the $l^2$ method, we start with the following simple observation.
\subsection{Lattice counting for AD-regular measures}\label{AD}
Let $\mu$ be an AD-regular Borel measure on $[0,1]^n.$ Let $\delta\in (0,1), Q\in\mathbb{N}, \gamma\in [0,1]^n$ be given. In this section, we will show that
\[
\sum_{q=Q}^{2Q}\mu(A(\delta,q,\gamma))\asymp \delta^n Q
\]
for $\delta\gg Q^{-(1-(n-s)/(2n-s))+\epsilon},$ where $s$ is the $l^2$-dimension of $\mu$ (which is equal to $\Haus \mu$) and $\epsilon>0$ is number that can be arbitrarily close to zero. We also require that $(n-s)<1.$ We have the following standard result (Plancherel)
\[
\sum_{|\xi|\leq R} |\mu(\xi)|^2\asymp\int_{[0,1]^n} (\mu(B(x,1/R))/(1/R)^n)^2 dx
\]
This is the $L^2$-average (under the Lebesgue measure) of the $\mu$ measure of $1/R$-balls. In general, if $\dim_{l^2} \mu>s,$ then
\begin{align*}
\sum_{|\xi|\leq R} |\mu(\xi)|^2\ll R^{n-s}\tag{L2}.
\end{align*}
Although $\mu$ will often have dimension $s< n$, it is instructive to see what happens if $s\geq n.$ In this case, $\hat{\mu}$ has bounded $l^2$ sum. This implies that $\mu$ has $L^2$-density, i.e. it can be viewed as an $L^2$-function (with respect to the Lebesgue measure). 
The estimate (L2) indicates that $\hat{\mu}(\xi)\ll |\xi|^{-s/2}$ holds for 'most of' vectors $\xi.$ We will make use of this soon.

Now, the aim is to estimate
\[
\sum_{q=Q}^{2Q}\mu(A(\delta,q,\gamma)).
\]
From Theorem \ref{Lattice Counting} in above, it is enough to estimate the following sum
\[
\sum_{q=Q}^{2Q}\sum_{q|\xi,\xi\neq 0, |\xi|\leq Kq/\delta} |\overline{\hat{\mu}(\xi)}|\leq\sum_{\xi\in\mathbb{Z}^n, |\xi|\leq 2KQ/\delta} d(\xi)|\hat{\mu}(\xi)|
\]
where $d(\xi)$ is the number of divisors of $\xi$ in $[Q,2Q].$ We know that $d(\xi)=o(|\xi|^\epsilon)$ for all $\epsilon>0.$ Therefore by Cauchy-Schwarz and (L2) we see that for large enough $Q,$
\begin{align*}
\sum_{\xi\in\mathbb{Z}^n\setminus\{\mathbf{0}\}, |\xi|\leq 2KQ/\delta} d(\xi)|\hat{\mu}(\xi)|\leq \left(\frac{2KQ}{\delta}\right)^{\epsilon} \left(Q\frac{2^nK^n}{\delta^n}\right)^{1/2} \left(\frac{2KQ}{\delta}\right)^{(n-s)/2}.\tag{C-S}
\end{align*}
For the second term, observe that the sum on LHS has roughly $Q\times (2K/\delta)^n$ many non-zero terms. Then we see that
\[
\sum_{q=Q}^{2Q}\mu(A(\delta,q,\gamma))\asymp Q\delta^n 
\]
as long as
\[
\left(\frac{2KQ}{\delta}\right)^{\epsilon} \left(Q\frac{2^nK^n}{\delta^n}\right)^{1/2} \left(\frac{2KQ}{\delta}\right)^{(n-s)/2}\ll Q,
\]
and
$
QK^{-N}\ll Q\delta^n.
$
This condition can be reformulated to the following
\[
\delta\gg Q^{1-((n-s))/(2n-s)+\epsilon'}
\]
for $\epsilon'>0$ which can be chosen to be arbitrarily small. Here $\epsilon'>0$ was introduced to counter the $O(K^{-N})$ term in Theorem \ref{Lattice Counting}. We have met this type of argument already in Section \ref{PFD}. As the value of
\[
\frac{1-(n-s)}{2n-s}
\]
is never larger than $1/n,$ we can not conclude that $\mu$ has GCP(n) or not. The step where we lose control is (C-S). More precisely, the middle term on the rightmost side is rather crude. This term represents the number of summands on the LHS. In general, one cannot hope to improve anything here. However, for example, if we know that $\hat{\mu}$ is thinly supported, i.e. $\hat{\mu}=0$ for most of the $\xi's$ in consideration, then the middle term on the rightmost side of (C-S) can take a much smaller value. Our strategy is to obtain a better analysis of the Fourier coefficients of $\mu$ and manage to have a discount for the amount of $\xi's$ for performing the step (C-S).
\subsection{Lattice counting for spectral, thick measures, proofs of Lemma \ref{LL^1}(2) and Theorem \ref{GCPtoKS}}\label{LCST}
We now want to apply the above arguments to spectral measures. We first prove Theorem \ref{GCPtoKS}. Let $n=1$ and $\mu$ be such that $\dim_{l^1}\mu>1/2.$  Let $\delta\in (0,1),Q\in\mathbb{N},\gamma\in [0,1]$ be given. Again, we try to estimate
\[
\sum_{q=Q}^{2Q} \mu(A(\delta,q,\gamma)).
\]
All the arguments in the previous section are still valid. We arrive at the following inequality
\[
\sum_{q=Q}^{2Q}\sum_{q|\xi,\xi\neq 0, |\xi|\leq Kq/\delta} |\overline{\hat{\mu}(\xi)}|\leq \sum_{\xi\in\mathbb{Z}\setminus{0}, |\xi|\leq 2KQ/\delta} d(\xi)|\hat{\mu}(\xi)|.
\]
Since $d(\xi)=o(|\xi|^{\epsilon})$ for all $\epsilon>0,$ we see that
\[
\sum_{q=Q}^{2Q}\sum_{q|\xi,\xi\neq 0, |\xi|\leq Kq/\delta} |\overline{\hat{\mu}(\xi)}|\ll (KQ/\delta)^{1-\dim_{l^1}\mu+\epsilon}
\]
for all $\epsilon>0.$ From Theorem \ref{Lattice Counting}, we see that (for any fixed number $N>0$ and $K>0$)
\[
\sum_{q=Q}^{2Q} \mu(A(\delta,q,\gamma))\asymp \delta \left(Q+(KQ/\delta)^{1-\dim_{l^1}\mu+\epsilon}\right)+O(QK^{-N}).\tag{E}
\]
Thus for $\epsilon'>0$ which can be chosen to be arbitrarily close to zero, as long as
\[
\delta\gg Q^{-\dim_{l^1}\mu/(1-\dim_{l^1}\mu)+\epsilon'},
\]
we have
\[
\sum_{q=Q}^{2Q} \mu(A(\delta,q,\gamma))\asymp Q\delta.
\]
Since $\dim_{l^1}\mu>1/2,$ we see that
\[
\frac{\dim_{l^1}\mu}{1-\dim_{l^1}\mu}>1.
\]
Thus, we see that there is a constant $c>0$ depending on the measure $\mu$ such that
\[
\sum_{q=Q}^{2Q} \mu(A(\delta,q,\gamma))\asymp Q\delta
\]
for $\delta\gg Q^{-1-c}.$ In this case, the error term
\[
\delta(KQ/\delta)^{1-\dim_{l^1}\mu+\epsilon}+O(QK^{-N})
\]
contributes at most $O(\delta Q^{1-c'})$ for some $c'>0.$  This implies that $\mu$ has GCP(1) and concludes the proof of Theorem \ref{GCPtoKS}. 

Now, we prove Part (2) of Lemma \ref{LL^1}. Let $\mu$ be a spectral measure with parameter $\Delta>1/2, \lambda\in (0,1)$ on $[0,1].$ We can decompose the sum on $\xi$ into two parts according to whether $|\hat{\mu}(\xi)|\leq |\xi|^{-\Delta}$ or not. Let $R$ be a large integer. Consider the set of integers with absolute value less than $R.$ Let $GOOD$ be the set of integers $\xi$ with $|\hat{\mu}(\xi)|\leq |\xi|^{-\Delta}$ and $BAD$ be the complement. Then we see that
\[
\sum_{\xi\in\mathbb{Z}, |\xi|\leq R} |\hat{\mu}(\xi)|\leq \sum_{\xi\in GOOD}|\xi|^{-\Delta}+(\# BAD)^{1/2} R^{(1-s)/2},
\]
where $s=\dim_{l^2}\mu$ and we have used Cauchy-Schwarz for the second term in the RHS above. We know that
\[
\#BAD\leq R^{1-\lambda}.
\]
This implies that
\[
(\# BAD)^{1/2} R^{(1-s)/2}\ll R^{(2-\lambda-s)/2}.
\]
Thus if $\lambda+s>1,$ we have
\[
\sum_{\xi\in\mathbb{Z}, |\xi|\leq R} |\hat{\mu}(\xi)|\ll R^{1-\Delta}+R^{(2-s-\lambda)/2}\ll R^{1-\Delta'}
\]
for a $\Delta'>1/2.$ This shows that $\dim_{l^1}\mu>1/2$. Moreover, from above we see that $\dim_{l^1}\mu\geq \dim_{ST}\mu.$ This  proves Lemma \ref{LL^1}(2).
\subsection{Prime denominators}\label{Prime}
Later in this paper, we will count rationals with prime denominators near self-similar sets. To do this, we want to estimate
\[
\sum_{q=Q, q\text { is prime}}^{2Q}\mu(A(\delta,q,\gamma)).
\]
Arguments in the previous section still work and it is possible to obtain the following analogue of (E):
\begin{align*}
\sum_{q=Q,q\text{ is prime}}^{2Q} \mu(A(\delta,q,\gamma))\asymp \delta \left(\pi(2Q)-\pi(Q)+(KQ/\delta)^{1-\dim_{l^1}\mu+\epsilon}\right)+O(QK^{-N}).
\end{align*}
In fact, the main term $\pi(2Q)-\pi(Q)$ counts the number of prime numbers in $(Q,2Q].$ The error terms $(KQ/\delta)^{1-\dim_{l^1}\mu+\epsilon}$ and $O(QK^{-N})$ can be obtained in exactly the same way as in the previous section.

By the prime number theorem, we see that
\[
\pi(2Q)-\pi(Q)=\frac{Q}{\log Q}+O\left(\frac{Q}{(\log Q)^2}\right).
\]
Then with the same arguments as in above, we see that if $\mu$ is spectral and thick, there is a positive number $c>0$ such that for $\delta\gg Q^{-1-c},$
\[
\sum_{q=Q,q\text{ is prime}}^{2Q} \mu(A(\delta,q,\gamma))\asymp \delta \frac{Q}{\log Q}.
\]
Indeed, the above estimate has an error term of order $O(\delta Q^{1-c'}+\delta Q/((\log Q)^2))$ for a $c'>0.$ This error term is much smaller than $\delta Q/\log Q$ for all large enough $Q.$
\subsection{First examples of self-similar measures with GCP(1)}
Here we will present measures with GCP(1) with Hausdorff dimension larger than but close to $1/2.$  We have that for spectral and thick measures $\mu,$
\[
\dim_{l^1} \mu>1/2.\]
Then we see that GCP(1) holds with threshold $\dim_{l^1}\mu/(1-\dim_{l^1}\mu)>1.$ It is in general difficult to estimate $\dim_{l^1}\mu$ unless some structural conditions are assumed for $\mu.$ In this section, we will provide some examples. 

Let $\mu$ be a Borel probability measure on $[0,1].$ Suppose that $\dim_{l^2} \mu=s>1/2.$ Consider the measure $\mu_1=\mu*\mu,$ viewed as a convolution on $\mathbb{T}=\mathbb{R}/\mathbb{Z}.$ Then the Fourier series $\hat{\mu}_1$ is the pointwise square of $\hat{\mu}.$ In particular, we have
\[
\dim_{l^1}\mu_1=\dim_{l^2}\mu=s>1/2.
\]
Let $\mu$ be the $p$-adic missing digits measure with $p=16$ and the digit set $\{0,1,2,3,4\}.$ Then $\dim_{l^1}\mu_1=\dim_{l^2}\mu=\Haus \mu=\log 5/\log 16>1/2.$ Now $\mu_1$ is not a missing digits measure. However, it is a self-similar measure with scaling ratio $1/16$, translations $\{0,1,2,3,4,5,6,7,8\}$ and probability vector
\[
\left(\frac{1}{25},\frac{2}{25},\frac{3}{25},\frac{4}{25},\frac{5}{25},\frac{4}{25},\frac{3}{25},\frac{2}{25},\frac{1}{25}\right).
\] 
Thus we see that $\Haus \mu\approx 0.74977.$ In this way, by choosing $p$ to be a large square, one can find self-similar measures with GCP(1) whose Hausdorff dimensions are larger but close to $1/2.$

The construction in above depends on the choice of the digits set crucially. In fact, let $p=a^2$ for a large integer $a.$ It is possible to choose a subset $A\subset\{0,\dots,a^2-1\},$ of cardinality $a+1,$ such that the cardinality of $A+A\mod a^2$ is very close to $a^2$ and $\Haus \mu_1$ is very close to one.
\section{Erd\H{o}s-Kahane arguments and spectral self-similar measures on $\mathbb{R}$} 
In this section, we provide examples of spectral self-similar measures. Intuitively, we shall see that all 'large enough' self-similar measures with 'grid structures' are spectral. Thus we will finish the proof of Theorem \ref{oneEXI}.
\subsection{Chernoff-Hoeffding estimate}\label{CH}
Let $x_1,\dots,x_n$ be an i.i.d. sequence. Furthermore, $x_1=1$ happens with probability $1-2\epsilon$ and else $x_1=0.$ We know that for large $n,$ with large probability, $\sum_i x_i\approx (1-2\epsilon)n.$ Chernoff-Hoeffding estimate provides us with a tail estimate (\cite{H63}). Let $1>\sigma,\epsilon>0,\sigma<1-2\epsilon,$ then we have
\[
Prob(\sum_i x_i\leq \sigma n)\leq e^{-n D(\sigma||1-2\epsilon)},
\]
where 
\[
D(\sigma||1-2\epsilon)=\sigma \log \frac{\sigma}{1-2\epsilon}+(1-\sigma) \log \frac{1-\sigma}{2\epsilon}.
\]
We now turn the above probability estimate to a counting estimate. Consider sequences of length $n$ over digits $\{0,\dots,p-1\}.$ Let $\sigma,\epsilon\in (0,1).$ We call a sequence $\omega$ to be $(\sigma,\epsilon,n)$-good if at least $\sigma n$ many digits are not in $[0,\epsilon p), ((1-\epsilon)p,p].$ Otherwise, we call a sequence $(\sigma,\epsilon,n)$-bad. For convenience, assume that $\epsilon p$ is an integer. Otherwise we just write $[\epsilon p]=\epsilon' p$ and replace $\epsilon$ with $\epsilon'.$ We see that the amount of $(\sigma,\epsilon,n)$-bad sequence is at most
\[
p^n e^{-n D(\sigma||1-2\epsilon)}=p^{n(1-\frac{D(\sigma||1-2\epsilon)}{\log p})}.
\]
\subsection{Almost Fourier decay for missing digits measures on $\mathbb{R}$.}\label{ErdosKahane}
Let $p> 2$ be an integer. Let $\mu$ be a ($p$-adic) missing digits self-similar measure. Let $k\in \{0,\dots p-2\}$ be the number of missing digits in $\mu,$ i.e. the IFS for $\mu$ has $p-k$ many branches. Then we see that
\[
\hat{\mu}(\xi)=\prod_{m\geq 0} \frac{1}{p-k}(\sum_j e^{-2\pi i a_jp^{-m} \xi})
\]
Let $\xi\in [p^n,p^{n+1}].$ Then we see that
\[
|\hat{\mu}(\xi)|\leq \prod_{m= 0}^{m=n} |\frac{1}{p-k}(\sum_j e^{-2\pi i a_jp^{-m} \xi})|
\]
Observe that if $\xi/p^m$ is not an integer,
\[
\sum_j e^{-2\pi i a_jp^{-m} \xi}=\frac{1-e^{-2\pi i p\xi p^{-m}}}{1-e^{-2\pi i \xi p^{-m}}}-\sum'_{j}e^{-2\pi i \xi a_j p^{-n}}.
\]
 Here $\sum'$ indicates the sum over the missing digits. The RHS is not defined for $\xi/p^m$ being integers. However, we can take the limit values for $\xi/p^m$ approaching integers and extend the domain to all $\xi.$  From here, we see that
\[
\frac{1}{p-k}|\sum_j e^{-2\pi i a_jp^{-m} \xi}|=\frac{1}{p-k}\left|\frac{1-e^{-2\pi i p\xi p^{-m}}}{1-e^{-2\pi i \xi p^{-m}}}\right|+O(\frac{k}{p-k}).
\]
The error term $O(.)$ has absolute value at most $k/(p-k).$ Now we have
\[
\left|\frac{1-e^{-2\pi i p\xi p^{-m}}}{1-e^{-2\pi i \xi p^{-m}}}\right|=\left|\frac{\sin(\pi p\xi p^{-n})}{\sin(\pi \xi p^{-n})}\right|.
\]
Let $\omega=\|\xi/p^{n}\|.$ Then we have for $1/2\geq \omega\geq 1/p,$
\[
\left|\frac{\sin(\pi p\xi p^{-n})}{\sin(\pi \xi p^{-n})}\right|\leq \frac{1}{2\omega}.
\]
Thus, suppose that the $p$-adic expansion of $|\xi|$ is a $(\sigma,\epsilon,n)$-good sequence, then
\begin{align*}
	|\hat{\mu}(\xi)|\leq \left(\frac{1}{p-k}\frac{1}{2\epsilon}+\frac{k}{p-k}\right)^{\sigma n}.\tag{Decay}
\end{align*}
Therefore we have
\[
|\log |\hat{\mu}(\xi)||/\log \xi\geq \frac{\sigma n}{n+1} \frac{|\log (1/(2\epsilon(p-k))+k/(p-k))|}{\log p}.
\]
If we keep $k$ being fixed and we choose $\epsilon$ so that 
\[
\frac{1}{2\epsilon}+k<\frac{1}{\epsilon}.
\]
Then for all large enough $n,$
\[
|\log |\hat{\mu}(\xi)||/\log \xi\geq (1+o_{n\to\infty}(1))\sigma \frac{\log (\epsilon (p-k))}{\log p}.
\]
The number of $\xi\in [p^n,p^{n+1})$ with $(\sigma,\epsilon,n)$-bad expansion is at most
\[
p^{n(1-\frac{D(\sigma||1-2\epsilon)}{\log p})}.
\]
We have therefore proved the following result.
\begin{thm}[Erd\H{o}s-Kahane]\label{EK}
	Let $k\geq 1$ be an integer. Let $\mu$ be a $p$-adic missing digits measure with $k$ many missing digits. There is a pair of numbers $\lambda,\Delta\in (0,1)$ such that for $N$ being large enough. For all but except at most $N^{1-\lambda}$ many integers $\xi\in [1,N]$ we have
	\[
	|\hat{\mu}(\xi)|\leq \xi^{-\Delta}.
	\]
	Moreover, $\lambda, \delta$ can be determined explicitly, in particular, if $k$ is fixed and $p$ is large enough, we can choose $\sigma,\epsilon\in (0,1),$ $1\leq\epsilon p\leq p-1$ and $2\epsilon k<1,$
	\[
	\lambda=\frac{D(\sigma||1-2\epsilon)}{\log p}
	\]
	and 
	\[
	\Delta=\sigma \frac{\log (\epsilon (p-k))}{\log p}.
	\]
\end{thm}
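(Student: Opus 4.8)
The plan is to assemble the Erd\H{o}s--Kahane scheme sketched in this subsection. Fix an integer $\xi$ with $p^n\le\xi<p^{n+1}$ and write its base-$p$ digits as $d_0,\dots,d_n$. Starting from $\hat\mu(\xi)=\prod_{m\ge 0}\frac1{p-k}\sum_{a\in D}e^{-2\pi i ap^{-m}\xi}$, I would first note that each factor has modulus $\le 1$ (triangle inequality on $p-k$ unimodular terms), so that truncating the product gives $|\hat\mu(\xi)|\le\prod_{m=0}^{n}\bigl|\frac1{p-k}\sum_{a\in D}e^{-2\pi i ap^{-m}\xi}\bigr|$. For each $m\ge 1$ I would replace the sum over $D$ by the full sum over $\{0,\dots,p-1\}$ minus the $k$ missing digits and use the Dirichlet-kernel identity to obtain
\[
\Bigl|\tfrac1{p-k}\sum_{a\in D}e^{-2\pi i ap^{-m}\xi}\Bigr|\le\frac1{p-k}\left|\frac{\sin(\pi p\,\xi p^{-m})}{\sin(\pi\,\xi p^{-m})}\right|+\frac{k}{p-k},
\]
the last term being the contribution of the missing digits. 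The key point is that $\|\xi p^{-m}\|$ is governed by the digit $d_{m-1}$: if $d_{m-1}$ lies in the central block of $(1-2\epsilon)p$ residues on which $\|\xi p^{-m}\|\ge\epsilon$, then (using $\epsilon p\ge 1$, so $\epsilon\ge 1/p$, together with the bound $|\sin(\pi p\theta)/\sin(\pi\theta)|\le 1/(2\|\theta\|)$ valid for $1/p\le\|\theta\|\le 1/2$) the $m$-th factor is $\le\rho:=\frac1{2\epsilon(p-k)}+\frac k{p-k}$.

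Next I would bring in the combinatorics of Section \ref{CH}. A uniformly chosen digit is central with probability $1-2\epsilon$; consequently, if the string $d_0\cdots d_{n-1}$ is $(\sigma,\epsilon,n)$-good, at least $\sigma n$ of the $n$ factors above are $\le\rho$ while the rest are $\le 1$, so $|\hat\mu(\xi)|\le\rho^{\sigma n}$, which is the bound (Decay). To turn this into a genuine power saving I need $\rho<1$; picking $\epsilon$ with $\epsilon(p-k)>1$ and $2\epsilon k<1$ (possible once $p>3k$) gives $\rho<\frac1{\epsilon(p-k)}<1$, and then for all large $\xi$
\[
\frac{\log(1/|\hat\mu(\xi)|)}{\log\xi}\ \ge\ \frac{\sigma n}{n+1}\cdot\frac{\log(\epsilon(p-k))}{\log p}\ \longrightarrow\ \sigma\,\frac{\log(\epsilon(p-k))}{\log p}=:\Delta,
\]
so that $|\hat\mu(\xi)|\le\xi^{-\Delta}$ for every $(\sigma,\epsilon,n)$-good $\xi$ once $n$ is large enough; here $0<\Delta<\sigma<1$.

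It then remains to count the exceptional $\xi$. Taking $\sigma<1-2\epsilon$, the Chernoff--Hoeffding bound of Section \ref{CH} shows that the number of $(\sigma,\epsilon,n)$-bad strings of length $n$ is at most $p^{\,n(1-\lambda)}$ with $\lambda=D(\sigma\,\|\,1-2\epsilon)/\log p$; multiplying by the at most $p$ choices of leading digit and summing the resulting geometric series over all $n$ with $p^n\le N$ yields $\ll_p N^{1-\lambda}$ bad $\xi$ in $[1,N]$ (the finitely many $\xi$ too small for the previous paragraph to apply are absorbed, at worst shrinking $\lambda$ by an arbitrarily small amount). Finally one checks $0<\lambda<1$: $D(\sigma\,\|\,1-2\epsilon)>0$ since $\sigma\ne 1-2\epsilon$, and $D(\sigma\,\|\,1-2\epsilon)\le-(1-\sigma)\log(2\epsilon)-\sigma\log(1-2\epsilon)<\log p$ because $\epsilon\ge 1/p$ makes the first term $<\log p$ and the second is bounded. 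This delivers the explicit $\lambda$ and $\Delta$ of the statement.

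The hard part — indeed the only real subtlety — is the uniformity in the per-factor estimate: one must neutralise the indices $m$ for which $\xi p^{-m}$ lies near an integer, where the Dirichlet kernel $|\sin(\pi p\theta)/\sin(\pi\theta)|$ is as large as $p$ and the $m$-th factor can exceed $1$ through the $k/(p-k)$ error term. The ``central digit'' bookkeeping is exactly the device that rules these $m$ out, and it is also the reason the argument weakens as $p$ decreases (and breaks down for $p=3$). Everything else — the geometric-sum identity, the Dirichlet-kernel inequality, and the translation from probabilities to digit counts — is routine.
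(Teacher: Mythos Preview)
Your proposal is correct and follows essentially the same route as the paper: truncate the infinite product using the trivial bound $\le 1$ on each factor, rewrite each factor via the Dirichlet-kernel identity plus a missing-digit error $k/(p-k)$, bound the kernel by $1/(2\|\xi p^{-m}\|)$ on the ``good'' indices where $\|\xi p^{-m}\|\ge\epsilon$, identify good indices with central base-$p$ digits, and count bad digit strings by Chernoff--Hoeffding. One small slip of language: in your final paragraph you say the $m$-th factor ``can exceed $1$'' for bad $m$---the factor itself never exceeds $1$ (triangle inequality), it is only your \emph{upper bound} $\frac{1}{p-k}\cdot\frac{1}{2\|\theta\|}+\frac{k}{p-k}$ that becomes useless; you correctly fall back to the trivial bound $\le 1$ there, so the argument is unaffected.
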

Of course, for applications, we want $\lambda,\Delta$ to be both as large as possible. It is in particular interesting to achieve $\Delta>1/2$ and $\lambda+\Haus \mu>1.$ at first glance, this seems to be impossible as we want to achieve a Fourier power decay that corresponds to a Hausdorff dimension greater than one. However, we are not asking for a decay $|\hat{\mu}(\xi)|=O(|\xi|^{-\Delta}).$ We are asking for a good decay to hold on 'most of' the frequencies. For our special self-similar measures, this can be achieved. 
\subsection{Bonus observation}\label{Bonus}
The above arguments  in fact lead us to a slightly stronger result. We have only considered $\xi$ to be integers. However, the above arguments in fact hold for $\xi$ being real numbers (Fourier transform) and the Erd\H{o}s-Kahane argument applies to their integer parts (which would already have enough digits to conclude the decay). We have met this type of arguments in Section \ref{Missing Digits Measures}.
\subsection{A simple construction}\label{Simple}
Let $k\geq 1$ be an integer. Let $p>k$ be an integer. We construct $p$-adic missing digits measures with only $k$ missing digits. It is straightforward to see that
\[
\dim_{l^2}\mu=\Haus \mu=\frac{\log(p-k)}{\log p}
\]
and $\mu$ is AD-regular. To be concrete, we let $k<50.$ We choose $\epsilon=0.01$ and $\sigma=0.97.$ Then $D(\sigma||1-2\epsilon)>0.$ We see that for all large enough $p,$
\[
\Delta=\sigma \frac{\log (\epsilon (p-k))}{\log p}>1/2,
\]
and
\begin{align*}
	\Haus \mu+\lambda=&\frac{\log (p-k)}{\log p}+\frac{D(\sigma||1-2\epsilon)}{\log p}&\\
	=&1-\frac{k}{p\log p}+\frac{D(\sigma||1-2\epsilon)}{\log p}+O(p^{-2})>1&.
\end{align*}
Here, we have used the fact that
\[
\log (1-kp^{-1})=1-kp^{-1}+O(p^{-2}).
\]
Thus, as long as $k<50$ is fixed, $p$-adic missing digits measures (with equal weights) are spectral and thick measures for all large enough $p.$ For a more concrete example. We fix $k=1.$ The it is possible to see that as long as $p\geq 13417,$ $p$-adic measures with one missing digit is spectral and thick. Later on, we will make some numerical arguments to have some smaller values for $p.$
\subsection{General scaling ratios}\label{General scaling ratios}
\subsubsection{Many branches}
Here we  find spectral and thick self-similar measures whose scaling ratio may not be rational numbers. Now we deal with the case when the scaling ratio of $\mu$ is $a\in (1/p,1/(p-1))$ and translations are integers in $\{0,\dots,p-1\}.$ We fix an integer $k,$ and we make the self-similar measure to have $p-k $ many branches with equal weight. We want to show that fas long as $p$ is large enough, the self-similar measure is spectral and thick.  Most of the arguments in the previous section apply for this setting. However, we can no longer use the digit expansion trick to obtain good Fourier decay. Instead, we will make the following more general counting argument.

Let $a^{-1}\in (p,p+1).$ Let $n$ be an integer. Consider the interval $[a^{-n},a^{-n-1}].$ For Fourier decay, we need to consider
\[
\left|\frac{\sin(\pi a^{-1}\omega a^{m})}{\sin(\pi \omega a^{m})}\right|.
\]
For this, we need to consider $\|\omega a^m\|$ for $m=0,1,\dots,n.$ Consider the following set of integers
\[
A(\sigma,\epsilon,a,n)=\{\omega\in [a^{-n},a^{-n-1}]\cap\mathbb{N}: \|\omega a^m\|>\epsilon \text{ for at most } \sigma n \text{ many } m\in\{0,\dots n\}\}.
\]

We now want to estimate the size of the above set. First, out of the set $\{1,\dots,n\}$ we choose a subset $\mathcal{N}$ of cardinality at most $\sigma n.$ Next, observe that $\omega a^{n}\in [1,a^{-1}].$ 

If $n\notin \mathcal{N},$ then we require that $\|\omega a^n\|\leq\epsilon.$ There are at most $[a^{-1}]+1$ many integers in this range. Thus there are at most $[a^{-1}]+1$ many $\epsilon$-balls for $\omega a^{n}$ to be contained in. 

If $n\in \mathcal{N},$ we then require that $\|\omega a^{n}\|>\epsilon.$ Then we see that there are at most $[a^{-1}]+1$ many intervals of length $1-2\epsilon$ (or $1/2-\epsilon$-balls) for $\omega a^n$ to be contained in. 

Next, we consider possible choices for $\omega a^{n-1}.$ If $n-1\notin\mathcal{N},$ then we require that $\|\omega a^{n-1}\|\leq \epsilon.$ Suppose that $n\notin\mathcal{N},$ then the possible choices for $\omega a^{n-1}$ must be contained in a disjoint union of at most $[a^{-1}]+1$ many $a^{-1}\epsilon$-balls. Furthermore, inside each such ball, there are at most $[a^{-1}\epsilon]+1$ many integers. Thus, the possible range for $\omega a^{n-1}$ is then a disjoint union of at most $([a^{-1}]+1)([a^{-1}(2\epsilon)]+1)$ many $\epsilon$-balls. Suppose that $n\in\mathcal{N},$ then similarly, we see that the possible range for $\omega a^{n-1}$ is a disjoint union of at most $([a^{-1}]+1)([a^{-1}(1-2\epsilon)]+1)$ many $\epsilon$-balls. Similar argument can be done in the case when $n-1\in\mathcal{N}.$

We can perform the above steps for each $m\in \{n,\dots,1\}.$ As a result, we see that in the end, there are at most
\[
([a^{-1}]+1)([a^{-1}(1-2\epsilon)]+1)^{\#\mathcal{N}}([a^{-1}(2\epsilon)]+1)^{n-\#\mathcal{N}}
\]
choices for $\omega.$  Therefore, we see that
\[
\# A(\sigma,\epsilon,a,n)\leq ([a^{-1}+1])\sum_{\mathcal{N}} ([a^{-1}(1-2\epsilon)]+1)^{\#\mathcal{N}}([a^{-1}(2\epsilon)]+1)^{n-\#\mathcal{N}},
\]
where the sum is ranging over all possible subset $\mathcal{N}\subset \{1,\dots,n\}$ with $\#\mathcal{N}\leq \sigma n.$ We can arrange the sum according to the cardinality of $\mathcal{N},$
\[
\# A(\sigma,\epsilon,a,n)\leq ([a^{-1}+1])\sum_{1\leq k\leq \sigma n}\binom{n}{k} ([a^{-1}(1-2\epsilon)]+1)^{k}([a^{-1}(2\epsilon)]+1)^{n-k}.
\]
From this and the Stirling's estimate for binomial coefficients, we see that
\[
\limsup_{n\to\infty} \frac{\log \#A(\sigma,\epsilon,a,n)}{n\log a^{-1}}\leq \frac{H(\sigma)}{-\log a}+\sigma \frac{\log ([a^{-1}(1-2\epsilon)]+1)}{-\log a}+(1-\sigma) \frac{\log ([a^{-1}(2\epsilon)]+1)}{-\log a},
\]
where $H(\sigma)=-\sigma\log \sigma-(1-\sigma)
\log (1-\sigma)$. We can rearrange the above as
\[
\limsup_{n\to\infty} \frac{\log \#A(\sigma,\epsilon,a,n)}{n\log a^{-1}}\leq \frac{DD(\sigma,a,\epsilon)}{-\log a},
\] 
where
\[
DD(\sigma,a,\epsilon)=\frac{H(\sigma)+\sigma \log ([a^{-1}(1-2\epsilon)]+1)+(1-\sigma) \log ([a^{-1}(2\epsilon)]+1)}{-\log a}.
\]
The purpose of this step is to compare the above with the formula for $\lambda$ we had in Theorem \ref{EK}. We want to use $DD(\sigma,a\epsilon)$ in the place of $D(\sigma||1-2\epsilon)$ and our new $\lambda$ is
\[
\lambda=\frac{DD(\sigma,a,\epsilon)}{-\log a}.
\] 
This will make the choice of $\lambda$ to be a bit worse, i.e. smaller than the case for $a=1/p.$  However, if $p$ is large enough, then $a$ is small enough and the defect on $\lambda$ tends to zero. The rest of the argument will be similar to what we already did for the case when $a^{-1}=p$ is an integer. Thus as long as $p$ is large enough $\mu$ is thick and spectral. 
\subsubsection{A question for two branches}
If we let $p=2,$ then the self-similar measure we constructed above has two branches. If we choose $a$ to be very close to but smaller than $1/2,$ then this self-similar measure will have Hausdorff dimension close to one (more precisely, $\log 2/\log (a^{-1})$).  We are not able to estimate its $l^1$-dimension. However, we suspect that as long as $a$ is sufficiently close to $1/2$, then $\dim_{l^1}\mu>1/2.$
\subsection{Numerical computations}\label{Simple2}
In this section, we will perform some numerical computations to find some more spectral and thick measures. The reader can skip this section as it will not contribute to the main theory other than reducing the value $p=13417$ to $p=15$ for the middle-$p$th Cantor measure to be spectral and thick and the fact that the middle-third Cantor measure is spectral. Better numerics are likely to exist. Numerical methods we introduce here can be extended to deal with more general self-similar measures other than missing digits measures (by making the arguments in Section \ref{General scaling ratios} to be more quantitative).  However, this is not the main scope of this paper and we therefore hope to revisit this matter in the future.

\subsubsection{The Lyapunov function and the first discretization method}
Let $p\geq 3$ be an integer. Let $0=a_1<a_2<\dots<a_{r}\leq p-1$ be $1<r<p$ integers. We then construct the missing digits measure $\mu.$ Let $\phi$ be the following function
\[
\phi(x)=-\frac{\log |r^{-1}\sum_{i=1}^r  e^{-2\pi i a_i x}|}{\log p}.
\]
We call this function the \emph{Lyapunov function} of $\mu.$ This function is $\mathbb{Z}$-periodic on $\mathbb{R}.$ Moreover, it is a positive and locally integrable function. Now, we construct the following $1/p$-discretized function
\[
\phi_p(x)=\inf_{y\in I^p_x}\phi(y),
\]
where $I^p_x$ is the interval of form $[j/p,(j+1)/p),j\in\mathbb{Z}.$ Now $\phi_p$ is a step function. We only to consider $\phi_p$ on $[0,1].$ Consider the random variable $X$ which takes the values of $\phi_p$ with equal probability. There are $p$ possible values for $\phi_p$ counting multiplicities. Thus $X$ is a Bernoulli variable with $p$ values or more intuitively, a fair coin with $p$-sides.

Let $m_\mu=E[X]$, $\sigma^2_\mu=Var[X],$ $C_\mu=\max X.$ Let $n\geq 1$ be an integer and let $X_1,\dots,X_n$ be i.i.d random variables distributed as $X.$ Let $S_n=\sum_{i=1}^n X_i.$ Then by Bernstein's inequality \cite{Bern}, we see that
\[
Prob(|S_n-n m_\mu|>t)\leq 2 \exp \left(-\frac{t^2/2}{n \sigma^2_\mu+3^{-1}C_\mu t}\right).
\]
for all $t>0.$ We can apply the arguments in Sections \ref{CH}, \ref{ErdosKahane} to see that as long as $m_\mu>1/2,$ $\mu$ is spectral with
\[
\dim_{R}\mu\geq \frac{1}{\log p}\frac{2^{-1}(m_\mu-2^{-1})^2}{\sigma^2_\mu+3^{-1}C_\mu (m_\mu-2^{-1})}.
\]
We know that $\Haus \mu=\dim_{l^2}\mu=\log r/\log p.$ We provide some examples.
\begin{exm}
	Let $p=6,r=1.$ Let $\{0,1,2,3,4\}$ be the digits set, i.e. we have the missing digit $5.$ Then it is possible to see that
	\[
	m_\mu\approx 0.557317>1/2.
	\]
	Thus, the base $6$-missing digits measure with missing digit $5$ is spectral.
\end{exm}

\begin{exm}
	Let $p=12,r=1.$ Let $\{0,1,2,3,4,5,6,7,8,9,10\}$ be the digits set, i.e. we have the missing digit $11.$ Then it is possible to see that
	\[
	m_\mu\approx 0.700569 >1/2
	\]
	and
	\[
	\Haus\mu+\dim_R\mu\geq 1.0081>1.
	\]
	Thus, the base $12$-missing digits measure with missing digit $11$ is spectral and thick.
\end{exm}

\begin{exm}\label{15th}
	Let $p=15,r=1.$ Let $\mu$ be the middle-$15$th Cantor measure. Then it is possible to see that
	\[
	m_\mu\approx 0.67345 >1/2
	\]
	and
	\[
	\Haus\mu+\dim_R\mu\geq 1.00756 >1.
	\]
	Thus, the middle-$15$th Cantor measure is spectral and thick.
\end{exm}
\subsubsection{Higher order discretization methods}
Now we continue going along this route. Let $k\geq 1$ be an integer. We consider the $1/p^k$-discretized function
\[
\phi_{p^k}(x)=\inf_{y\in I^{p^k}_x}\phi(y).
\]
As before, let $X$ be the random variable taking the same values of $\phi_{p^k}$, each with probability $1/p^k.$ Again, We define
\begin{align*} 
	&m_\mu=E[X],\\ &\sigma_\mu^2=Var[X],\\ &C_\mu=\max X.
\end{align*}
Notice that they also depend on $k.$ We will write, for example, $m_\mu(k)$ when it is necessary to display the dependence on $k$.

Let $n\geq k$ be an integer. Let $\xi$ be an integer in $[p^{n}, p^{n+1}].$ Then it has $n$ digits in base $p$ expansion. We can divide these $n$ digits into blocks of $k$ digits and view this as a base $p^k$ expansion. More precisely, let $j\in\{0,\dots,k-1\}.$ We can expand $[\xi/p^j]$ in base $p^k,$ i.e.
\[
[\xi/p^j]=a_{j,n_j}p^{n_j k}+\dots +a_{j,1}.
\] 
In particular, we see that
\[
[(n-k)/k]\leq n_j\leq [n/k]+1.
\]
For each fixed $j\in\{0,\dots,k-1\},$ we can consider a sequence of $n_j$ many i.i.d random variables $X_1,\dots,X_{n_j}$ distributed as $X.$ Then we see that for $t_j>0,$
\begin{align*}
	|X_1+\dots+X_{n_j}-n_j m_\mu|>t_j\tag{G}
\end{align*}
with probability at most
\[
2 \exp \left(-\frac{t_j^2/2}{n_j\sigma^2_\mu+3^{-1}C_\mu t_j}\right).
\]
Thus the number of possible sequences to realize (G)  is at most
\[
2p^{n_jk}\exp \left(-\frac{t_j^2/2}{n_j \sigma^2_\mu+3^{-1}C_\mu t_j}\right).
\]
We can perform the above steps $k$ times, one for each $j.$ Suppose that the base $p^k$ expansions of $[\xi/p^j]$ all satisfy (G), then we would have
\[
-\log\hat{\mu}(\xi)/\log (p^n)\geq -\frac{t_0+\dots+t_{k-1}}{n}+m_\mu.
\]
Suppose that $m_\mu>1/2.$ We choose $t_j=n_j(m_\mu-2^{-1})$ and see that
\[
\frac{t_0+\dots+t_{k-1}}{n}+m_\mu=\frac{1}{2}.
\]
Thus we see that
\[
-\log\hat{\mu}(\xi)/\log (p^n)\geq \frac{1}{2}
\]
for all but except at most
\[
\sum_{j} 2p^{n_jk}\exp \left(-\frac{t_j^2/2}{n_j \sigma^2_\mu+3^{-1}C_\mu t_j}\right)
\]
many $\xi\in [p^n,p^{n+1}].$ If $n$ is large enough, the above value is
\[
\ll p^n \exp\left(-\frac{n(m_\mu-0.5)^2/2}{k\sigma_\mu^2+3^{-1}kC_\mu (m_\mu-0.5)}  \right).
\]
Thus we see that $\mu$ is spectral and
\[
\dim_R \mu\geq \frac{1}{\log p}\frac{(m_\mu-0.5)^2/2}{k\sigma_\mu^2+3^{-1}kC_\mu (m_\mu-0.5)}.
\]
It is possible to see that as $k\to\infty,$ the values
\[
m_{\mu}(k)\to\int_{[0,1]} \phi(x)dx=m,
\]
\[
\sigma^2_\mu(k)\to \int_{[0,1]} (\phi(x)-m)^2dx=\sigma^2.
\]
The value $C_\mu(k)$ may tend to $\infty.$ If $k$ is large, then the above estimate for $\dim_R \mu$ is likely to be too small to be useful. On the other hand, we do have a better sufficient condition for the spectral property.
\begin{thm}\label{Spectral Measures}
	Let $p\geq 3$ be an integer. Let $\mu$ be a $p$-adic missing digits measure on $\mathbb{R}.$ Let $\phi$ be its Lyapunov function. If 
	\[
	\int_{[0,1]}\phi(x)dx>1/2
	\]
	then $\mu$ is spectral. Let $k\geq 1$ be the smallest integer such that $m_{\mu}(k)>1/2.$ Then we have
	\[
	\dim_R \mu\geq \frac{1}{\log p}\frac{(m_\mu(k)-0.5)^2/2}{k\sigma_\mu^2(k)+3^{-1}kC_\mu(k) (m_\mu(k)-0.5)}.
	\]
	Here $m_\mu(k),\sigma_\mu(k),C_\mu(k)$ are as defined in the beginning of this subsection.
\end{thm}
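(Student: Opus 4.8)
The plan is to follow the higher-order discretization scheme set up just above, turning the Erd\H os--Kahane estimate of Section \ref{ErdosKahane} into a Bernstein concentration bound for a sum of i.i.d.\ copies of $X$. Write $g(x)=r^{-1}\sum_{i}e^{-2\pi i a_i x}$, so that $|\hat\mu(\xi)|=\prod_{m\ge 0}|g(p^{-m}\xi)|=p^{-\sum_{m\ge 0}\phi(p^{-m}\xi)}$; since each factor has modulus $\le 1$ and $\phi\ge 0$,
\[
-\frac{\log|\hat\mu(\xi)|}{\log p}\ \ge\ \sum_{m=0}^{n}\phi(p^{-m}\xi)\qquad\text{for }\xi\in[p^n,p^{n+1}).
\]
So it suffices to show that the right-hand side exceeds $(\tfrac12+c)n$ for a fixed $c>0$ for all but $\ll p^{n(1-\lambda)}$ of the $\xi\in[p^n,p^{n+1})$, and then sum over scales $n\le\log_p N$. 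For the first assertion: the step functions $\phi_{p^k}$ increase with $k$ (finer partitions) and converge to $\phi$ at every continuity point, and $\phi$ is integrable (it has only logarithmic singularities, coming from the finitely many zeros of the trigonometric polynomial $g$), so by monotone convergence $m_\mu(k)=\int_0^1\phi_{p^k}\uparrow\int_0^1\phi>\tfrac12$; hence a smallest $k$ with $m_\mu(k)>\tfrac12$ exists, and it is enough to prove the displayed bound on $\dim_{\mathrm R}\mu$ for that $k$.

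The heart of the argument is to dominate $\sum_{m=0}^n\phi(p^{-m}\xi)$ from below by a sum of i.i.d.\ terms. Fix $\xi\in[p^n,p^{n+1})$ and, for each residue $j\in\{0,\dots,k-1\}$, write the base-$p^k$ expansion $[\xi/p^j]=\sum_{l\ge 0}a_{j,l}(p^k)^l$ with $a_{j,l}\in\{0,\dots,p^k-1\}$. The elementary point is that for $l\ge 1$ the fractional part $\{p^{-(j+lk)}\xi\}$ lies in the half-open interval $[a_{j,l-1}p^{-k},(a_{j,l-1}+1)p^{-k})$: indeed $p^{-(j+lk)}\xi=p^{-lk}[\xi/p^j]+p^{-lk}\{p^{-j}\xi\}$, the fractional part of $p^{-lk}[\xi/p^j]$ is $a_{j,l-1}p^{-k}+(\text{a remainder}<p^{-k}-p^{-lk})$, and the extra term $p^{-lk}\{p^{-j}\xi\}<p^{-lk}$ is too small to carry out of that interval. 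By $\mathbb Z$-periodicity of $\phi$ this gives $\phi(p^{-(j+lk)}\xi)\ge\inf_{[a_{j,l-1}p^{-k},(a_{j,l-1}+1)p^{-k})}\phi=\phi_{p^k}(a_{j,l-1}p^{-k})$. Partitioning $\{0,\dots,n\}$ by residue mod $k$ and discarding the non-negative terms with $l=0$ yields
\[
\sum_{m=0}^n\phi(p^{-m}\xi)\ \ge\ \sum_{j=0}^{k-1}\sum_{l=1}^{n_j}\phi_{p^k}(a_{j,l-1}p^{-k}),\qquad n_j=\lfloor(n-j)/k\rfloor,\quad \textstyle\sum_j n_j=n+O_k(1).
\]
When $\xi$ is uniform over the $p^n(p-1)$ integers in $[p^n,p^{n+1})$ the base-$p$ digits $d_0,\dots,d_{n-1}$ are i.i.d.\ uniform, so for each fixed $j$ the numbers $\phi_{p^k}(a_{j,l-1}p^{-k})$, $l=1,\dots,n_j$, are i.i.d.\ copies of $X$; only $O_k(1)$ top blocks, which involve the conditioned digit $d_n$, are affected, and they change the sum by $O_k(1)$.

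Finally I would apply Bernstein's inequality within each shift and union-bound over $j$. Fix $\delta\in(0,m_\mu(k)-\tfrac12)$ and call $\xi$ \emph{bad} if $\sum_{l=1}^{n_j}\phi_{p^k}(a_{j,l-1}p^{-k})<n_j m_\mu(k)-\delta n_j$ for some $j$. By Bernstein (with $t_j=\delta n_j$) and the union bound, and using $n_j=n/k+O_k(1)$, the number of bad $\xi\in[p^n,p^{n+1})$ is at most
\[
p^{n+1}\sum_{j=0}^{k-1}2\exp\!\Big(-\frac{(\delta n_j)^2/2}{n_j\sigma_\mu^2(k)+3^{-1}C_\mu(k)\,\delta n_j}\Big)\ \le\ p^{n(1-\lambda)}
\]
for $n$ large and any $\lambda<\dfrac{1}{\log p}\dfrac{\delta^2/2}{k\sigma_\mu^2(k)+3^{-1}kC_\mu(k)\delta}$. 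Every good $\xi$ satisfies $-\log|\hat\mu(\xi)|/\log p\ge(m_\mu(k)-\delta)(n+O_k(1))$, hence $|\hat\mu(\xi)|\le|\xi|^{-\Delta}$ with $\Delta:=m_\mu(k)-\delta>\tfrac12$ once $n$ is large. Summing over $n\le\log_p N$ (a geometric series dominated by its last term) gives $\#\{\xi\in[1,N]:\hat\mu(\xi)\ge|\xi|^{-\Delta}\}\ll N^{1-\lambda}$, so $\mu$ is spectral with $\dim_{\mathrm R}\mu\ge\lambda$; since the displayed ratio is increasing in $\delta$, letting $\delta\uparrow m_\mu(k)-\tfrac12$ yields $\dim_{\mathrm R}\mu\ge\frac{1}{\log p}\frac{(m_\mu(k)-1/2)^2/2}{k\sigma_\mu^2(k)+3^{-1}kC_\mu(k)(m_\mu(k)-1/2)}$. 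The one genuinely delicate step is the carry analysis in the third paragraph --- checking that the block structures attached to the $k$ shifts $j$ tile $\{0,\dots,n\}$ without interference and that the low-order contribution never escapes its $p^{-k}$-interval --- together with the routine bookkeeping of the $O_k(1)$ boundary blocks; the rest is a direct transcription of the Chernoff/Bernstein computations of Sections \ref{CH} and \ref{ErdosKahane} and strengthens Theorem \ref{EK}.
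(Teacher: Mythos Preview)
Your proposal is correct and follows essentially the same approach as the paper: the argument is exactly the higher-order discretization scheme laid out in the subsection immediately preceding the theorem statement, combining the block decomposition of the base-$p$ digits into $k$ shifted base-$p^k$ expansions with Bernstein's inequality and a union bound over the $k$ shifts. You supply more detail than the paper at two points---the carry analysis showing $\{p^{-(j+lk)}\xi\}\in[a_{j,l-1}p^{-k},(a_{j,l-1}+1)p^{-k})$, and the monotone-convergence justification that $m_\mu(k)\uparrow\int_0^1\phi$---both of which the paper asserts without verification; your limiting argument $\delta\uparrow m_\mu(k)-\tfrac12$ is also cleaner than the paper's direct choice $t_j=n_j(m_\mu-\tfrac12)$, which formally only yields $\Delta\ge\tfrac12$ rather than $\Delta>\tfrac12$.
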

\begin{exm}
	For the middle-third Cantor measure $\mu,$ it is possible to see that
	\[
	\int_{[0,1]}\phi(x)dx=\frac{\log 2}{\log 3}>\frac{1}{2}.
	\]
	Therefore $\mu$ is spectral. In fact, by taking $k=6$ in above, we see that
	\[
	m_\mu(6)\approx 0.614731
	\]
	and
	\[
	\dim_R\mu\geq 0.0012797.
	\]
	This is very far from being able to conclude that $\mu$ is thick which requires $\dim_R \mu$ to be at least $1-\log 2/\log 3\approx 0.369.$
\end{exm}
It is in general not true that
\[
\int_{[0,1]}\phi(x)dx=\Haus \mu.
\]
However, this is the case for $p$ being an odd integer and for the middle-$p$-th Cantor measures. This can be proved with the help of the following formula
\[
\int_{[0,1/2]}\log (\sin(2\pi x))dx=-\frac{\log 2}{2}.
\]

\section{Lattice counting for self-similar measures on $\mathbb{R}^n, n\geq 2$}\label{ND}
In this section, we consider self-similar measures (missing digits measures) on higher dimensional Euclidean spaces. The situation becomes a little bit more subtle than the one dimensional case. For the higher dimensional cases, we will only provide examples rather than a general theory. We will prove Theorem \ref{NEXI}.

The measures we will consider in this section are missing digits measures, thus their Hausdorff dimensions and $l^2$-dimensions are equal. We note here that our method can deal with more general self-similar measures, for example, the contraction ratio does not need to be a rational number and the probability vector does not need to be equally weighted.
\subsection{Basic set up}
Let $p>1$ be an integer and let $r\in\{2,\dots,p^n-1\}.$ We choose $r$ translations vectors out of $\mathbb{Z}^n\cap [0,p-1]^n.$ Let $\mu$ be the self-similar measures constructed with the above data and with the probability vector which gives each branch the weight $1/r.$ In this case, we have $\dim_{l^2}\mu=\Haus \mu.$

First, we recall the Fourier transform formula:
\[
\hat{\mu}(\xi)=\prod_{m\geq 0} \frac{1}{p^n-k}(\sum_j e^{-2\pi i p^{-m} (a_j,\xi)}),
\]
where $k=p^n-r$ is the number of missing digits and $(,)$ is the standard Euclidean  inner product. Now we have
\[
\frac{1}{p^n-k}\sum_j e^{-2\pi i p^{-m} (a_j,\xi)}=\frac{1}{p^n-k}\prod_{s=1}^n \frac{1-e^{-2\pi i p\xi_s/p^m}}{1-e^{-2\pi i \xi_s/p^m}}+O(\frac{k}{p^n-k}).
\]
In fact, the $O(.)$ term has absolute value at most
\[
\frac{k}{p^n-k}.
\]
We can apply Theorem \ref{EK}. We need to choose $\Delta>n/(n+1).$ For now, we do not restrict $\lambda.$ The conditions for it will be given later. Now, let $\epsilon>0$ be a small number. Let $t$ be a large integer. Suppose that at least one of the coordinates of $\xi$ is $(\Delta,\epsilon, t)$-good, then we have
\[
\prod_{m\geq 0}\frac{1}{p^n-k}\left(\left|\prod_{s=1}^n \frac{1-e^{-2\pi i p\xi_s/p^m}}{1-e^{-2\pi i \xi_s/p^m}}\right|+k\right)\leq \left(\frac{1}{p^n-k}\left(\frac{p^{n-1}}{2\epsilon}+k\right)\right)^{\Delta t}.
\]
Suppose that $k<p^{n-1}/(2\epsilon).$  Since $\Delta>n/(n+1),$ we see that
\begin{align*}
	-\log |\hat{\mu}({\xi})|/\log (p^t)>\frac{n}{n+1} \frac{|\log p^{n-1}/(\epsilon (p^n-k))|}{\log p}\tag{D}
\end{align*}
as long as one of the coordinates of $\xi$ is $(\Delta,\epsilon,t)$-good. In particular if $k,\epsilon$ are fixed then as long as $p$ is large enough, (D) implies that for a constant $\Delta'>n/(n+1),$
\[
|\hat{\mu}(\xi)|\leq |\xi|^{-\Delta'}.
\]
We define the following sets
\[
GOOD_t=\left\{x\in [0,p^t]: x \text{ is } (\Delta,\epsilon,t)\text{-good}\right\},
\]
\[
BAD_t=[0,p^t]\setminus GOOD_t.
\]
For all large enough $t,$ we have
\begin{align*}
	\#BAD_t\ll p^{t(1-\lambda)}.\tag{B}
\end{align*}
Let $x\geq 0$ be an integer, we say that $x$ is good if it is in $GOOD_t$ for some $t>0,$ else, we say that $x$ is bad.
\subsection{Lattice counting estimates} 
In this and the next sections, we want to find self-similar measures $\mu$ such that for a constant $c>0,$
\[
\sum_{q=Q}^{2Q}\mu(A(\delta,q,\gamma))\asymp Q\delta^n
\]
for $\delta\gg Q^{-(1/n)-c}.$ Let $Q,\delta$ be given. As before, we want to estimate
\[
\sum_{q=Q}^{2Q}\sum_{\xi\neq 0, q|\xi, |\xi|\leq Kq/\delta} |\hat{\mu}(\xi)|.
\]
From the above we see that
\[
\sum_{\xi\neq 0,q|\xi, |\xi|\leq Kq/\delta} |\hat{\mu}(\xi)|\leq \sum_{\xi\neq 0,q|\xi, |\xi|\leq Kq/\delta} |\xi|^{-\Delta'}+\sum_{\{\xi:q|\xi, |\xi|\leq Kq/\delta, \text{ all coordinates are bad}\}} |\hat{\mu}(\xi)|.
\]
For convenience, we write
\[
B_q=\{\xi:q|\xi, |\xi|\leq Kq/\delta, \text{ all coordinates are bad}\}=\{x\in [0,Kq/\delta], q|x, x\text{ is bad}\}^n,
\]
where the exponent on the rightmost expression denotes the Cartesian product. Again, for convenience, we write
\[
\#_q=\#B_q.
\]
Then we see that (we again omit a divisor function as a multiplicative factor)
\[
\sum_{q=Q}^{2Q}\sum_{\xi\neq 0,q|\xi, |\xi|\leq Kq/\delta} |\hat{\mu}(\xi)|\leq (\sum_q\sum_{\xi\neq 0, q|\xi,|\xi|\leq 2KQ/\delta} |\xi|^{-\Delta'})+(\sum_q\sum_{B_q} |\hat{\mu}(\xi)|).
\]
The first term on the RHS in above is
\[
\ll (KQ/\delta)^{-\Delta'}\times \frac{QK^n}{\delta^n}.
\]
For the second term, we use Cauchy-Schwarz and ($L_2$) in Section \ref{AD},
\[
\sum_q\sum_{B_q} |\hat{\mu}(\xi)|\leq \sqrt{\sum_q \#_q} (Q/\delta)^{(n-\Haus \mu)/2}.
\]
Now we see that
\[
\sum_q \#_q=\sum_q (\#\{x\in [0,Kq/\delta], q|x, x\text{ is bad}\})^n
\]
We know that $$\sum_{q=Q}^{2Q} \#\{x\in [0,Kq/\delta], q|x, x\text{ is bad}\}\leq (2KQ/\delta)^{1-\lambda}$$
and 
$$\#\{x\in [0,Kq/\delta], q|x, x\text{ is bad}\}\leq K/\delta.$$
Thus, 
$$\sum_q (\#\{x\in [0,Kq/\delta], q|x, x\text{ is bad}\})^n$$ is maximized when $\#\{x\in [0,Kq/\delta], q|x, x\text{ is bad}\}$ all takes the largest possible value (to achieve the maximal degree of non-uniformity).\footnote{More precisely, let $a_1,\dots,a_Q$ are positive numbers. In order to maximize the sum $\sum_{i=1}^{Q} a^n_i$ under the condition that $\sum_{i=1}^Q a_i=S_1$, a fixed positive value and $\max_{i} a_i\leq S_2$, another fixed positive value, we need to choose $a_1,\dots,a_Q$ in such a way that they values are either zero or $\min\{S_1,S_2\}$ except for at most one term. If $S_2<S_1,$ then one possible solution is $\{S_2,\dots,S_2,0,\dots,0,S'\}$ so that there are $[S_1/S_2]$ many $S_2's$ and $S'=S_1-S_2[S_1/S_2].$} From here, we have
\[
\sum_{q=Q}^{2Q} (\#\{x\in [0,Kq/\delta], q|x, x\text{ is bad}\})^n\ll ((KQ/\delta)^{1-\lambda}\times \frac{1}{K/\delta}+1) \times (K/\delta)^n.
\]
Thus we see that ($a,b>0,\sqrt{a+b}\leq \sqrt{a}+\sqrt{b}$),
\begin{align*}
	\sum_{q=Q}^{2Q}\sum_{\xi\neq 0,q|\xi, |\xi|\leq Kq/\delta} |\hat{\mu}(\xi)|\ll (KQ/\delta)^{-\Delta}\times \frac{QK^n}{\delta^n}\\+(KQ/\delta)^{(1-\lambda+n-\Haus \mu)/2}(K/\delta)^{(n-1)/2}+(K/\delta)^{n/2}(KQ/\delta)^{(n-\Haus\mu)/2}.
\end{align*}
We need the above to be $\ll Q$ for a good counting. Again, as before, we can choose $K=(1/\delta)^{\epsilon}$ for an arbitrarily small positive number $\epsilon.$ Thus, by choosing $\epsilon>0$ to be small enough we see that the above is $\ll Q$ if for an $\epsilon'>0$
\[
\delta\gg Q^{\epsilon'-(\Delta/(n-\Delta))},
\]
\[
\delta\gg Q^{\epsilon'+(1-\lambda+n-\Haus\mu-2)/(n-1+1-\lambda+n-\Haus\mu)},
\]
\[
\delta\gg Q^{\epsilon'+(n-\Haus \mu-2)/(2n-\Haus \mu)}.
\]
We want that the exponents on $Q$ in above are all smaller than $-1/n.$ This is achieved by the following conditions (we suppressed $\epsilon'$ into the strict inequality signs $'>'$):
\begin{align*}
	\tag{C}\Delta'>n/(n+1),\\
	\Haus \mu>n-\lambda,\\
	\Haus \mu>n\frac{n}{n+1}.
\end{align*}
The first condition is satisfied as we already required that $\Delta'>n/(n+1).$ The rest two conditions say that $\Haus \mu$ should be sufficiently close to $n.$ From here, under conditions (C), we see that for a constant $c>0,$
\[
\sum_{q=Q}^{2Q}\mu(A(\delta,q,\gamma))\asymp Q\delta^n
\]
for $\delta\gg Q^{-(1/n)-c}.$
\subsection{Examples}
We can now choose $\Delta=(n+1)/(n+2)>n/(n+1).$ We also choose $\epsilon$ to be any number smaller than $(1-\Delta)/2>0.$ Then we have $D(\Delta||1-2\epsilon)>0.$ From Theorem \ref{EK} we see that the value of $\lambda$ in (B) can be chosen to be
\[
\lambda=\frac{D(\Delta||1-2\epsilon)}{\log p}.
\]
Let $\mu$ be a missing digits measure with $k$ missing digits. Then we have
\[
\Haus \mu=\frac{\log (p^n-k)}{\log p}.
\]
We need to make Conditions (C) valid. Observe that as long as $p$ is large enough and $k\geq 1$ is fixed, the first and third conditions are satisfied. For the second condition, we see that
\[
\frac{\log (p^n-k)}{\log p}+\frac{D(\Delta||1-2\epsilon)}{\log p}=n-\frac{k}{p^n \log p}+\frac{D(\Delta||1-2\epsilon)}{\log p}+O(1/p^{2n})>n
\]
as long as $p$ is large enough.

\section{Lattice counting and metric Diophantine approximation}\label{Dio}
We have finished the latter counting consideration in this paper. Now in this and the next Sections, we consider some applications of the lattice counting estimates in metric Diophantine approximation. Most of the arguments are straightforward but complicated. However, most of the ideas are not new. In fact, the proofs of Theorems \ref{Good Counting to Metric},\ref{Jarnik1} are  standard arguments using the convergence Borel-Cantelli lemma (and the Hausdorff-Cantelli lemma); the proof of Theorem \ref{Jarnik2} uses a similar argument in \cite{BV}; the proof of Theorem \ref{Jarnik4} follows closely \cite{B34}; finally, the proofs of the results in Section \ref{divergence} utilize some arguments in \cite[Section 4]{ACY}.

\subsection{A Khinchine convergence result}
Lattice counting estimates are very useful for metric Diophantine approximations. First, we prove the following standard result.

\begin{thm}\label{Good Counting to Metric}
	Let $n\geq 1$ be an integer. Let $\gamma\in [0,1]^n.$ Let $\mu$ be a Borel probability measure on $[0,1]^n$ with GCP(n), i.e.  for large enough $Q,$
	\[
	\sum_{q=Q}^{2Q}\mu(A(\delta,q,\gamma))\ll Q\delta^n.
	\]
	for $1>\delta\gg Q^{-(1/n)-c}$ where $c>0$ is a constant. Then $\mu(W(\psi,\gamma))=0$ as long as $\sum_{q\geq 1} \psi(q)^n<\infty.$
\end{thm}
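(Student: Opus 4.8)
The plan is to run the standard convergence Borel--Cantelli argument with a dyadic decomposition, feeding GCP(n) into each block. Put $E_q = A(\psi(q),q,\gamma) = \{x : \|qx-\gamma\|\le\psi(q)\}$, so that $W(\psi,\gamma) = \limsup_{q\to\infty} E_q$. By the convergence half of Lemma~\ref{Borel} it is enough to prove $\sum_{q\ge 1}\mu(E_q) < \infty$.

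First I would reduce to the case where $\psi$ stays above the GCP threshold. Fix $c'\in(0,c)$ and replace $\psi$ by $\widetilde\psi(q)=\max\{\psi(q),\,q^{-1/n-c'}\}$: this only enlarges $W(\psi,\gamma)$, keeps $\psi$ non-increasing (as both $\psi$ and $q\mapsto q^{-1/n-c'}$ are), and
\[
\sum_{q\ge 1}\widetilde\psi(q)^n\;\le\;\sum_{q\ge 1}\psi(q)^n+\sum_{q\ge 1}q^{-1-nc'}\;<\;\infty
\]
because $nc'>0$. So from now on $\psi(q)\ge q^{-1/n-c'}$ for every $q$. Then for each dyadic $Q=2^k$ and $q\in[Q,2Q]$, monotonicity gives $E_q\subseteq A(\psi(Q),q,\gamma)$, and $\psi(Q)\ge Q^{-1/n-c'}\gg Q^{-1/n-c}$, so GCP(n) applies on the block:
\[
\sum_{q=Q}^{2Q}\mu(E_q)\;\le\;\sum_{q=Q}^{2Q}\mu\big(A(\psi(Q),q,\gamma)\big)\;\ll\;Q\,\psi(Q)^n .
\]
Summing over $k$ and using Cauchy condensation for the non-increasing sequence $q\mapsto\psi(q)^n$,
\[
\sum_{q\ge 1}\mu(E_q)\;\ll\;\sum_{k\ge 0}2^k\,\psi(2^k)^n\;\ll\;\sum_{q\ge 1}\psi(q)^n\;<\;\infty,
\]
and the convergence Borel--Cantelli lemma gives $\mu(W(\psi,\gamma))=0$, as desired.

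The only real obstacle is the threshold built into GCP(n): the inequality $\sum_{q=Q}^{2Q}\mu(A(\delta,q,\gamma))\ll Q\delta^n$ is available only for $\delta$ not too small compared with $Q$, so one cannot plug an arbitrarily fast-decaying $\psi$ in directly. The reduction above is precisely what absorbs the part of the series where $\psi$ dips below the threshold, and it succeeds only because truncating at $q^{-1/n-c'}$ with $c'>0$ leaves a convergent tail $\sum_q q^{-1-nc'}$. The argument also relies on $\psi$ being non-increasing --- both to pass from $\psi(q)$ to $\psi(Q)$ on a dyadic block and to apply Cauchy condensation --- in line with the monotonicity hypothesis used elsewhere in the paper.
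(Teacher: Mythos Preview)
Your proof is correct and follows essentially the same route as the paper's: dyadic decomposition, GCP(n) on each block, and convergence Borel--Cantelli, with the sub-threshold regime handled by bounding $\psi$ below by a power $q^{-1/n-c'}$. The only cosmetic difference is that you perform this truncation once upfront via $\widetilde\psi=\max\{\psi,q^{-1/n-c'}\}$, whereas the paper splits each dyadic block into the two cases $\delta_Q\gtrless Q^{-1/n-c}$; your packaging is arguably cleaner but the content is the same.
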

\begin{proof}
	Let $\psi:\mathbb{N}\to (0,1/2)$ be a monotonic approximation function with
	\[
	\sum_{q} (\psi(q))^n<\infty.
	\]
	Without loss of generality, we can assume that $\psi(q)\leq q^{-1/n}.$ Indeed, if $\psi(q)\geq q^{-1/n}$ for some $q\geq 1000,$ then $\psi(q')\geq q^{-1/n}$ for $q'\leq q.$ Thus we have
	\[
	\sum_{q'\in [[q/2],q]}(\psi(q'))^n\geq \frac{1}{2}.
	\]
	Thus if $\psi(q)\geq q^{-1/n}$ for infinitely many $q,$ then 
	\[
	\sum_q (\psi(q))^n=\infty.
	\]
	Thus $\psi(q)\leq q^{-1/n}$ for all but except at most finitely many $q.$ We can then assume that $\psi(q)\leq q^{-1/n}$ for all $q.$ By assumption, for all large $Q$, for $\delta\gg Q^{-1/n-c,}$ we have
	\[
	\sum_{q=Q}^{2Q}\mu(A(\delta,q,\gamma))\asymp Q\delta^n.
	\]
	For each $Q,$ we can choose $\delta_Q=\max_{q\in [Q,2Q]}\{ \psi(q)\}.$ Two possibilities can happen,  either $\delta_Q\geq Q^{-1/n-c}$ or $\delta_Q\leq Q^{-1/n-c}.$ For the first case, we have
	\[
	\sum_{q=Q}^{2Q}\mu(A(\delta_Q,q,\gamma))\asymp Q\delta_Q^n.
	\]
	For the second case, we have
	\[
	\sum_{q=Q}^{2Q}\mu(A(\delta_Q,q,\gamma))\leq \sum_{q=Q}^{2Q}\mu(A(Q^{-1/n-c},q,\gamma))\asymp Q \frac{1}{Q^{1+nc}}=\frac{1}{Q^{nc}}.
	\]
	Collecting the above results, we see that
	\[
	\sum_{q\geq 1}\mu(A(\psi(q),q,\gamma))\leq \sum_{k\geq 0} \sum_{q=2^k}^{2^{k+1}-1} \mu(A(\delta_{2^k},q,\gamma))\ll \sum_{k\geq 0} \max\{2^k\delta^n_{2^k},2^{-nck}\}.
	\]
	Observe that
	\[
	\sum_{k\geq 0} 2^{-nck}<\infty,
	\]
	and
	\[
	\sum_{k\geq 0} 2^k \delta^n_{2^k}\leq 1+\sum_{k\geq 1} 2^k \frac{\sum_{q=2^{k-1}+1}^{2^k} (\psi(q))^n}{2^{k}-2^{k-1}}\ll \sum_{q\geq 1} (\psi(q))^n<\infty.
	\]
	Thus $\sum_q \mu(A(\psi(q),q,\gamma))<\infty.$ By the convergence Borel-Cantelli lemma, we conclude the result.
\end{proof}
\subsection{Besicovitch-Jarnik type results}
In this section, we will discuss some Jarnik type results.  Combining all the results in this section will conclude the proof of Theorem \ref{Main}(1)(3) as well as Theorem \ref{MainInhomo}(2).
\begin{thm}\label{Jarnik1}
	Let $n\geq 1$ be an integer. Let $s\in (0,n).$ Let $\mu$ have GCP(n) and be $s$-dimensional AD-regular. Then there is a $c>0$ such that for $c'\in (0,c),$
	\[
	\Haus W(\psi,\gamma)\cap supp(\mu)\leq s-\frac{nc'}{\frac{1}{n}+c'+1},
	\]
	for the approximation function $\psi(q)=q^{-(1/n)-c'}$ and any $\gamma\in [0,1]^n.$
\end{thm}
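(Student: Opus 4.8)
The plan is to deduce the bound from the Hausdorff--Cantelli lemma (Lemma~\ref{HC}), using the good counting property GCP($n$) of $\mu$ to count covering balls and the AD-regularity of $\mu$ to convert measures of balls into cardinalities. Let $\alpha>\tfrac1n$ be the good counting threshold of $\mu$, set $c=\alpha-\tfrac1n>0$, and fix $c'\in(0,c)$ and $\psi(q)=q^{-1/n-c'}$. For each large $q$ the set $A(\psi(q),q,\gamma)$ is a union of $\asymp q^{n}$ balls of radius $\psi(q)/q$ centred at the $1/q$-separated points $(k+\gamma)/q$, $k\in\mathbb{Z}^{n}$, and $W(\psi,\gamma)\cap[0,1]^{n}=\limsup_{q\to\infty}A(\psi(q),q,\gamma)$.

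First I would fix the covering system: let $\{B_i\}$ be the collection of all those constituent balls of the sets $A(\psi(q),q,\gamma)$, over all large $q$, that meet $\mathrm{supp}(\mu)$. If $x\in W(\psi,\gamma)\cap\mathrm{supp}(\mu)$, then $x$ lies in some $B_i$ for infinitely many $q$, and the radii of these balls tend to $0$; hence $\{B_i\}$ is a covering system for $W(\psi,\gamma)\cap\mathrm{supp}(\mu)$ and this set is contained in $\limsup_i B_i$. By Lemma~\ref{HC} it therefore suffices to prove $\sum_i(\mathrm{diam}\,B_i)^{t}<\infty$ whenever $t>s-\frac{nc'}{1/n+c'+1}$.

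The key estimate counts, for each $q$, the number $N_q$ of constituent balls of $A(\psi(q),q,\gamma)$ meeting $\mathrm{supp}(\mu)$. Choosing $y_B\in B\cap\mathrm{supp}(\mu)$ in each such ball $B$, AD-regularity gives $\mu(B(y_B,\psi(q)/q))\gg(\psi(q)/q)^{s}$; since $B(y_B,\psi(q)/q)\subset A(2\psi(q),q,\gamma)$ and the enlarged balls have bounded overlap (here one uses $\psi(q)<1/2$, valid for large $q$), summing over $B$ yields $N_q\,(\psi(q)/q)^{s}\ll\mu(A(2\psi(q),q,\gamma))$. Now group $q$ into dyadic blocks $q\in[Q,2Q]$ with $Q=2^{k}$. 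On such a block $\psi(q)/q\asymp r_Q:=Q^{-(1+1/n+c')}$, and by monotonicity of $\psi$,
\[
\sum_{q=Q}^{2Q}N_q\ll r_Q^{-s}\sum_{q=Q}^{2Q}\mu\bigl(A(2\psi(Q),q,\gamma)\bigr).
\]
Because $c'<c$ we have $2\psi(Q)=2Q^{-1/n-c'}\gg Q^{-\alpha}$, so GCP($n$) applies and gives $\sum_{q=Q}^{2Q}\mu(A(2\psi(Q),q,\gamma))\ll Q\psi(Q)^{n}\ll Q^{-nc'}$; hence $\sum_{q=Q}^{2Q}N_q\ll r_Q^{-s}Q^{-nc'}$. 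Consequently the contribution of the block $Q=2^{k}$ to $\sum_i(\mathrm{diam}\,B_i)^{t}$ is
\[
\ll r_Q^{\,t-s}\,Q^{-nc'}=2^{-k\left[(1+1/n+c')(t-s)+nc'\right]},
\]
and $\sum_k$ of this is finite exactly when $(1+1/n+c')(t-s)+nc'>0$, i.e. when $t>s-\frac{nc'}{1/n+c'+1}$.

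Lemma~\ref{HC} then yields $\Haus(W(\psi,\gamma)\cap\mathrm{supp}(\mu))\le s-\frac{nc'}{1/n+c'+1}$, as claimed. The geometry of $A(\psi(q),q,\gamma)$ and the overlap bookkeeping are routine; the one point that requires care, and which I expect to be the only real subtlety, is that the enlargement factor produced by the AD-regularity step (replacing $\psi$ by $2\psi$) must not destroy the applicability of GCP($n$) — this is precisely why $c'$ must be taken strictly below the threshold $c$.
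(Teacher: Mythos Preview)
Your proof is correct and follows essentially the same approach as the paper: both convert the $\mu$-measure bound from GCP($n$) into a count of covering balls via AD-regularity, group the $q$'s dyadically, and then apply the Hausdorff--Cantelli lemma (Lemma~\ref{HC}) to conclude. Your write-up is in fact somewhat more detailed than the paper's (which leaves the dyadic summation implicit), but the argument is the same.
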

\begin{proof}
	As $\mu$ is $s$ dimensional AD-regular for some $s\in (0,n),$  it is possible to see that
	\[
	\#\{\mathbf{k}\in\mathbb{Z}^n: d((\mathbf{k}-\gamma)/Q, supp(\mu)) \leq \delta/2   \}\ll \mu(A(\delta,Q,\gamma))/(\delta/Q)^{s},
	\]
	where $d(.,.)$ denotes the standard Euclidean distance. Let $s'\in (0,n).$ Consider the $\delta/Q$-neighbourhood $N_\delta$ of 
	\[
	\{(\mathbf{k}-\gamma)/Q: \mathbf{k}\in\mathbb{Z}^n, d((\mathbf{k}-\gamma)/Q, supp(\mu))<\delta/2\}. 
	\]
	Assume that $\delta<1/2$ so that $N_\delta$ is a union of disjoint $\delta/Q$-balls. Let $s'>0.$ The we see that the sum of $|B|^{s'}$ with $B$ ranging over the balls forming $N_\delta$ is
	\begin{align*}
		\ll \frac{\mu(A(\delta,Q,\gamma))}{(\delta/Q)^s}\left(\frac{\delta}{Q}\right)^{s'}\ll \mu(A(\delta,Q,\gamma))(\delta/Q)^{s'-s}.\tag{+}
	\end{align*}
	Now, GCP(n) for $\mu$ provides us with a number $c>0$ such that
	\[
	\sum_{q=Q}^{2Q}\mu(A(\delta,q,\gamma))\ll Q\delta^n
	\]
	for $\delta\gg Q^{-(1/n)-c}.$ Let $c'\in (0,c].$ Combine this with the estimate (+) and apply the Hausdorff-Cantelli lemma (see Lemma \ref{HC}), we see that $\mathcal{H}^{s'}(W(\psi,\gamma)\cap supp(\mu))=0$ for $\psi(q)=q^{-(1/n)-c'}$ and 
	\[
	s'> \max\left\{s-\frac{nc'}{\frac{1}{n}+c'+1},0\right\}.
	\]
	Usually, $c$ is a very small number so the RHS in above is usually bigger than zero and slightly smaller than $s.$ This implies that
	\[
	\Haus W(\psi,\gamma)\cap supp(\mu)\leq \max\left\{s-\frac{nc'}{\frac{1}{n}+c'+1},0\right\}.
	\]
	This finishes the proof.
\end{proof}
Now, we make use of the lower bound of GCP(n). Here, we only consider the case when $n=1$ and $\gamma=0.$ We prove the following results. The proof we found is inspired by the argument in \cite{BV} for the proof of the mass transference principle. In what follows, let $\mu$ be a missing digits measure which is spectral and thick. Let $\mu'$ be a branch of $\mu.$ Then $\mu'$ is a scaled and translated copy of $\mu.$ Many properties of $\mu$ are also valid for $\mu'.$ For example, if $\mu$ is spectral and thick, we see that $\mu'$ is spectral and thick as well. Of course, their implied constants in Definition \ref{DST} are not the same but the $l^2$-dimension and the residue dimension will keep unchanged. 
\begin{thm}\label{Jarnik2}
	Let $t\geq 1$ be an integer. Let $p\geq 3$ be an integer. Let $\mu$ be a $p$-adic missing digits measure in $\mathbb{R}$ with $t$ missing digits. Let $s=\Haus \mu$. Then for all large enough $p$ so that $\dim_{l^1}\mu>1/2$, there is a $c=c_{t,p}>0$ such that for the approximation function $\psi(q)=q^{-1-c'}, c'\in (0,c],$ we have
	\[
	s-\frac{c'}{2+c'}\geq \Haus W(\psi,0)\cap supp(\mu)\geq  s-o_{c'\to 0}(1).
	\]
	Thus in particular, as $c'\to 0,$ we have
	\[
	\Haus W(\psi,0)\cap supp(\mu)\to s.
	\]
\end{thm}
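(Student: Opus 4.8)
The upper bound is essentially immediate from the results already in hand. Since $\mu$ is a $p$-adic missing digits measure it is $s$-dimensional AD-regular with $s=\Haus\mu=\dim_{l^2}\mu$, and the hypothesis $\dim_{l^1}\mu>1/2$ gives, via Theorem \ref{GCPtoKS}, that $\mu$ has GCP(1). Hence Theorem \ref{Jarnik1} applies with $n=1$, $\gamma=0$ and $\psi(q)=q^{-1-c'}$; writing $c$ for the good counting constant of $\mu$, its conclusion reads, for $c'\in(0,c]$,
\[
\Haus W(\psi,0)\cap supp(\mu)\le s-\frac{c'}{2+c'}.
\]
So the only work is the matching lower bound $\Haus W(\psi,0)\cap supp(\mu)\ge s-o_{c'\to0}(1)$.

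For the lower bound the plan is, for each fixed small $c'\in(0,c]$, to build a Cantor-type subset $\mathcal K\subseteq W(\psi,0)\cap supp(\mu)$ carrying a probability measure that is $(s-\varepsilon(c'))$-Frostman with $\varepsilon(c')\to 0$ as $c'\to 0$, and then to quote the mass distribution principle (Lemma \ref{MDP}). The construction follows the Cantor set used in the proof of the mass transference principle in \cite{BV}. Two observations make the induction self-similar. First, if $F$ is a level-$M$ cylinder of $\mu$, say $F=p^{-M}\,supp(\mu)+c_F$ with $c_F$ a $p$-adic rational of denominator $p^{M}$, and if $q=p^{M}q'$ is a multiple of $p^{M}$, then for $x=p^{-M}y+c_F\in F$ one has $\|qx\|=\|q'y\|$ since $qc_F\in\mathbb Z$; thus the condition $\|qx\|\le\psi(q)$ on $F$ becomes $y\in A\big(\psi(p^{M}q'),q',0\big)$, the same type of set for $\mu$, with the shrunken radius $\psi(p^{M}q')=(p^{M}q')^{-1-c'}$. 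Second, the lower bound in GCP(1) for $\mu$ survives the restriction of denominators to multiples of $p^{M}$: running the Fourier argument of Section \ref{LCST} with $q=p^{M}q'$, $q'\in[Q',2Q']$, the main term becomes $\asymp Q'\delta$ and the error terms keep the shape of the estimate (E) of that section (this is exactly the device used in Section \ref{Prime} for prime denominators), giving $\sum_{q'=Q'}^{2Q'}\mu\big(A(\delta,p^{M}q',0)\big)\asymp Q'\delta$ for $\delta\gg (Q')^{-1-c}$. Finally AD-regularity turns $\mu$-mass into counting, so that the number of admissible rationals landing within $\delta/(p^{M}q')$ of $supp(\mu)$ is comparable to $\sum_{q'}\mu\big(A(\delta,p^{M}q',0)\big)$ divided by the typical $\mu$-mass of one of the resulting sub-branches.

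With these ingredients the tree is grown as follows. Starting from a branch $\mu'$ of $\mu$ contained in $[0,1]$, at the passage from scale $p^{-M_m}$ to the next level we fix $Q'_m$ large enough that $\delta_m:=(p^{M_m}Q'_m)^{-1-c'}\gg (Q'_m)^{-1-c}$ — which forces $Q'_m$ to exceed a fixed power of $p^{M_m}$ depending only on $c-c'$ — and, inside each surviving branch, we use the restricted lower-bound counting together with AD-regularity to select a family of pairwise disjoint sub-branches of $\mu$ of scale $\approx (p^{M_m}Q'_m)^{-2-c'}$, each contained in some $A\big(\tfrac15\psi(p^{M_m}q'),q',0\big)$ with $q'\in[Q'_m,2Q'_m]$, so that every point of every sub-branch satisfies $\|(p^{M_m}q')x\|\le\psi(p^{M_m}q')$. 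Disjointness of the selected intervals at a given stage is automatic, since two rationals with denominators in a dyadic block $[Q,2Q]$ are $\gg Q^{-2}$ apart while these intervals have length $\ll Q^{-2-c'}$. Setting $p^{-M_{m+1}}$ comparable to the common sub-branch scale, the intersection $\mathcal K$ of this nested sequence lies in $supp(\mu)$ and, by the inequalities above satisfied with $q=p^{M_m}q'_m\to\infty$, in $W(\psi,0)$. Spreading the mass of each parent branch uniformly over its children defines a probability measure on $\mathcal K$, and a routine ball-by-ball estimate (of the kind carried out in \cite{BV}) shows it is $(s-\varepsilon(c'))$-Frostman, the loss $\varepsilon(c')$ being governed by, and tending to $0$ with, the gap between the admissible radius exponent $1+c'$ and the GCP threshold exponent $1+c$. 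Lemma \ref{MDP} then yields $\Haus W(\psi,0)\cap supp(\mu)\ge\Haus\mathcal K\ge s-\varepsilon(c')$.

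The main obstacle is the bookkeeping of this Cantor construction: one must verify that the lower-bound counting stays in force within branches once denominators are confined to multiples of $p^{M_m}$ and radii are shrunk by many orders of magnitude, that all the error terms in the analogue of (E) remain subdominant uniformly along the tree, and that the parameters $M_m,Q'_m$ can be chosen so the resulting Frostman exponent genuinely converges to $s$ as $c'\to 0$. The inhomogeneous-shift complication that would naively arise inside a branch is precisely what the restriction $q\equiv 0\pmod{p^{M_m}}$ removes.
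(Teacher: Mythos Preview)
Your upper bound is identical to the paper's. For the lower bound, your overall architecture---a Cantor subset of $W(\psi,0)\cap supp(\mu)$ built level by level inside branches of $\mu$, followed by Lemma \ref{MDP}---is also the paper's, but the engine that drives the local counting inside a branch is genuinely different.

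The paper does \emph{not} restrict denominators to multiples of $p^{M}$. In its Step 1 it works with a branch $\mu'$ directly, using $|\hat{\mu'}(\xi)|=|\hat\mu(p^{-k}\xi)|$ together with the fact that the Erd\H{o}s--Kahane decay for $\hat\mu$ extends to non-integer arguments (Section \ref{Bonus}); this yields the branch-level estimate $\sum_{q=Q}^{2Q}\mu'(A(\delta,q,0))\asymp Q\delta$ once $Q\gg p^{k/\beta}$ for an explicit $\beta=\beta(\dim_{l^1}\mu,c')$. It then restricts to \emph{prime} $q\in[Q,2Q]$ (via Section \ref{Prime}) precisely so that distinct pairs $(a,q)$ give distinct rationals away from $0$, which is what converts the mass estimate into an honest lower bound on the number of disjoint children balls. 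Your device---take $q=p^{M}q'$ and unwind by $y=p^{M}(x-c_F)$---is a clean alternative that collapses the local problem exactly to the global GCP for $\mu$, avoiding any re-analysis of $\hat{\mu'}$. What your route buys is conceptual transparency; what the paper's route buys is that \emph{all} prime denominators in a dyadic block are in play, not only those divisible by $p^{M}$.

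There is, however, a genuine gap in your argument. You assert that ``the number of admissible rationals \ldots\ is comparable to $\sum_{q'}\mu(A(\delta,p^{M}q',0))$ divided by the typical $\mu$-mass of one of the resulting sub-branches.'' That division gives only an \emph{upper} bound on the number of distinct balls: the sum counts each rational once per representation $b/q'$, and a rational with small reduced denominator (most dramatically $0$) is represented $\Theta(Q')$ times. Your separation remark (distinct rationals with denominators in $[Q,2Q]$ are $\gg Q^{-2}$ apart) handles disjointness of distinct balls but does nothing for this multiplicity. The paper's cure is the prime restriction, and the same cure works in your framework: take $q'$ prime in $[Q',2Q']$ and invoke Section \ref{Prime} for the counting, just as you already do for the Fourier error term. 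Once that is added, your construction goes through. You should also be aware that the Frostman computation is not routine here: it is the paper's entire Step 4, and the waiting-time constraint (your $Q'_m\gg p^{M_m(1+c')/(c-c')}$, the paper's $Q\gg p^{k/\beta}$) is exactly what degrades the conclusion to $s-o_{c'\to 0}(1)$ rather than the hoped-for $s-c'/(2+c')$; the paper only extracts an $O(\sqrt{c'})$ loss after optimising a free scale parameter $\beta''$ in that step.
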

\begin{rem}
	We crucially need the homogeneous condition $\gamma=0$ in the proof. However, we suspect the inhomogeneous version ($\gamma\neq 0$) of this theorem holds as well. This theorem applies to the middle-$15$th Cantor measure.
\end{rem}
The lower bound for the Hausdorff dimension in Theorem \ref{Jarnik2} is rather unsatisfactory. In fact if $p$ is really large enough, we have the following improvement.
\begin{thm}\label{Jarnik4}
	Let $t\geq 1$ be an integer. Let $p\geq 3$ be an integer. Let $\mu$ be a $p$-adic missing digits measure in $\mathbb{R}$ with $t$ missing digits. Let $s=\Haus \mu$. Then for all large enough $p$, $\mu$ is spectral and
	\[
	\dim_{l^1}\mu \Haus \mu>1/2.
	\] Furthermore, there is a $c=c_{t,p}>0$ such that for the approximation function $\psi(q)=q^{-1-c'}, c'\in (0,c],$ we have
	\[
	\Haus W(\psi,0)\cap supp(\mu)=s-\frac{c'}{2+c'}.
	\]
\end{thm}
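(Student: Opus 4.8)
Two of the three assertions are quick given what precedes. That $\mu$ is spectral for all large $p$ is Theorem~\ref{EK}. That $\dim_{l^1}\mu\,\Haus\mu>1/2$ follows by feeding the Erd\H{o}s--Kahane parameters of Theorem~\ref{EK} into Lemma~\ref{LL^1}(2): one gets $\dim_{l^1}\mu\ge \dim_{\mathrm{ST}}\mu$ with a quantitative gap $\dim_{l^1}\mu-\tfrac12\gg 1/\log p$, while $1-\Haus\mu=1-\log(p-t)/\log p\ll 1/(p\log p)$, so the gap dominates the defect of $\Haus\mu$ from $1$ and the product exceeds $1/2$. Since $\dim_{l^1}\mu>1/2$ this already gives GCP(1) (Theorem~\ref{GCPtoKS}), and hence the upper bound $\Haus W(\psi,0)\cap \mathrm{supp}(\mu)\le s-\frac{c'}{2+c'}$ is exactly Theorem~\ref{Jarnik2}. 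So the only new content is the matching lower bound
\[
\Haus W(\psi,0)\cap\mathrm{supp}(\mu)\ \ge\ s-\frac{c'}{2+c'},
\]
which I would prove by a Besicovitch--Jarn\'ik style Cantor construction inside $\mathrm{supp}(\mu)$, in the spirit of \cite{B34} and the mass transference arguments of \cite{BV}.

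The engine is the \emph{lower} half of GCP(1) (established inside the proof of Theorem~\ref{GCPtoKS}): for $p$ large there is $c>0$ with $\sum_{q=Q}^{2Q}\mu(A(\delta,q,0))\gg Q\delta$ whenever $\delta\gg Q^{-1-c}$ and $Q$ is large, and, crucially, the same estimate holds for every branch $\mu'$ of $\mu$ supported on a cylinder $I$ (a rescaled copy of $\mu$), with the effective threshold only mildly degraded — this is where the stronger hypothesis $\dim_{l^1}\mu\,\Haus\mu>1/2$ is used, to keep the counting inside a thin cylinder valid down to the scales needed below. Fix $c'\in(0,c]$, so $\psi(q)=q^{-1-c'}$, and pick a rapidly increasing sequence $Q_1\ll Q_2\ll\cdots$ with $\sum_{j<k}\log Q_j=o(\log Q_k)$; set $\delta_k=Q_k^{-1-c'}$ and $\rho_k=\delta_k/Q_k=Q_k^{-2-c'}$, noting $\delta_k\gg Q_k^{-1-c}$. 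Combining the branch-wise GCP lower bound with the upper AD-regularity $\mu(B(x,r))\ll r^{s}$ of missing digits measures, one extracts, inside any cylinder $I$ of length $\asymp\rho_k$, a family of mutually $\rho_{k+1}$-separated ``good balls'' $B(a/q,\rho_{k+1})\subset I$ with $q\in[Q_{k+1},2Q_{k+1}]$ and $\mu(B(a/q,\rho_{k+1}))\asymp\rho_{k+1}^{s}$, whose number is $N_{k+1}\asymp Q_k^{(2+c')(1-s)}Q_{k+1}^{(2+c')s-c'}$; moreover, since the GCP lower bound survives restriction, this count is \emph{ubiquitous}, i.e.\ it persists in every subinterval of $I$ meeting $\mathrm{supp}(\mu)$ down to a controlled scale.

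Using this, build nested unions of cylinders $E_0\supseteq E_1\supseteq\cdots$: $E_0$ a single cylinder, and $E_{k+1}$ obtained from $E_k$ by replacing each of its balls with the good balls of radius $\rho_{k+1}$ it contains (shrunk slightly to genuine cylinders so the recursion continues). Then $C=\bigcap_k E_k\subseteq W(\psi,0)\cap\mathrm{supp}(\mu)$, since any $x\in C$ satisfies $|q_kx-a_k|\le q_k\rho_k=\delta_k\le\psi(q_k)$ for the infinitely many $q_k$ arising in the construction. Put a measure $\nu$ on $C$ by distributing mass uniformly among children at each stage, so a level-$k$ ball carries $\nu$-mass $\prod_{j\le k}N_j^{-1}$. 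Because $\log\bigl(\prod_{j\le k}N_j\bigr)=((2+c')s-c')\log Q_k+o(\log Q_k)$ while $-\log\rho_k=(2+c')\log Q_k$, the ratio tends to $\frac{(2+c')s-c'}{2+c'}=s-\frac{c'}{2+c'}$; hence for every $\alpha<s-\frac{c'}{2+c'}$ one verifies $\nu(B(x,r))\ll r^{\alpha}$ at the construction scales $r\asymp\rho_k$, and at intermediate scales $\rho_{k+1}<r<\rho_k$ the same bound follows from the ubiquity of the good balls together with $\mu(B(x,r))\ll r^{s}$. The mass distribution principle (Lemma~\ref{MDP}) then yields $\Haus C\ge\alpha$ for every such $\alpha$, whence the lower bound, and combined with Theorem~\ref{Jarnik2} the equality $\Haus W(\psi,0)\cap\mathrm{supp}(\mu)=s-\frac{c'}{2+c'}$.

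The main obstacle is the branch-wise/ubiquity step: one must show that after restricting $\mu$ to a cylinder of size $\asymp\rho_k$ and renormalizing, the GCP lower bound still produces the expected number of good rationals \emph{equidistributed with respect to $\mu$} at the fine scales required for the intermediate Frostman estimate, and that the degradation of the GCP threshold under this restriction is absorbed precisely by the hypothesis $\dim_{l^1}\mu\,\Haus\mu>1/2$ (rather than merely $\dim_{l^1}\mu>1/2$). Once that local count is in place, the remaining verification of the Frostman property of $\nu$ across all scales is the standard — though bookkeeping-heavy — part of a Jarn\'ik lower bound à la \cite{B34}.
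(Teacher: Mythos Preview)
Your approach diverges from the paper's. You propose the mass distribution principle route (Cantor set plus Frostman measure $\nu$), which is essentially what the paper does in its proof of Theorem~\ref{Jarnik2}---and there it yields only $s-O(\sqrt{c'})$, not the exact $s-\frac{c'}{2+c'}$. For Theorem~\ref{Jarnik4} the paper abandons the measure $\nu$ entirely and instead runs Besicovitch's original covering argument: fix $\rho<s-\frac{c'}{2+c'}$ and an arbitrary cover $\mathcal C$ with $\sum_I|I|^\rho<1$; build nested families $\mathcal F_j''$ of good intervals (centred at rationals with prime denominators in $[Q_j,2Q_j]$) that survive after deleting everything touched by $\mathcal C$; and show $\bigcap_j\bigcup\mathcal F_j''\ne\varnothing$. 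The covering intervals are sorted into three length regimes---long ($|I|\ge Q_j^{-\beta}$, handled by the local GCP count), medium ($Q_j^{-2}\le |I|<Q_j^{-\beta}$, handled by AD-regularity plus the $1/Q_j^2$-separation of rationals), and short ($Q_j^{-2-c'}\le|I|<Q_j^{-2}$, each hits $O(1)$ good intervals). The hypothesis $\dim_{l^1}\mu\cdot\Haus\mu>1/2$ enters precisely in the medium regime: it guarantees one can take $\beta>(1-s)(2+c')$, which is the paper's ``Condition 1'' making the medium-regime count $Q_j^{2s-\beta(s-\rho)}$ negligible compared with $\#\mathcal F_j$.

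Your sketch glosses over exactly this range. Ubiquity (local GCP inside a cylinder of size $r$) is only available when $Q_{k+1}\gg r^{-1/\beta}$, i.e.\ for $r\gtrsim Q_{k+1}^{-\beta}$; below that, down to $r\sim Q_{k+1}^{-2}$, one has nothing better than the crude AD-regularity bound $\nu(B_r)\ll (rQ_{k+1}^2)^s/\prod_j N_j$. Plugging this in gives a Frostman exponent of only $s-\frac{c'(1-s)}{\beta}+o(1)$ at the worst scale $r\sim Q_{k+1}^{-\beta}$, and forcing this to be at least $s-\frac{c'}{2+c'}$ is again the inequality $\beta>(1-s)(2+c')$, hence $s\,\dim_{l^1}\mu>1/2$. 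So your MDP route can in principle reach the exact bound, but only after explicitly isolating this non-ubiquity window $[Q_{k+1}^{-2},Q_{k+1}^{-\beta}]$ and invoking the product hypothesis there; as written, the phrase ``the same bound follows from ubiquity together with $\mu(B(x,r))\ll r^s$'' hides the one genuinely delicate scale range. The paper's covering argument makes this step explicit (its $\mathcal C_2$ case) and sidesteps the construction of $\nu$ altogether.
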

\begin{rem}
	The difference is in the condition 'for all large enough $p$'. In fact, for this theorem, the requirement on $p$ is stronger than in Theorem \ref{Jarnik2}. Our numerics in Sections \ref{Simple},\ref{Simple2} are not strong enough to test whether this theorem applies for the middle 15th Cantor measure. For $t=1,$ it can be checked that this theorem applies for $p>10^7.$
\end{rem}
\begin{proof}[Proof of Theorem \ref{Jarnik2}]
	The upper bound is a special case of Theorem \ref{Jarnik1}. We now focus on the lower bound. 
	\subsubsection*{Step 1: Local counting property}
	We will now examine in details how scaling will affect the counting property. Let $\mu'$ be a branch of $\mu$. Then $\mu'$ is a probability measure which is a translated copy of $\mu$ scaled down by $p^{-k}$ where $k\geq 0$ is an integer. Thus the norm of the Fourier coefficients of $\mu'$ is related to that of $\mu$ by
	\[
	|\hat{\mu'}(\xi)|=|\hat{\mu}(p^{-k} \xi)|.
	\]
	Here, we recall the formula for Fourier transform,
	\[
	\hat{\mu}(\xi)=\int_{\mathbb{R}} e^{-2\pi i \xi x}d\mu(x).
	\]
	Thus we see from Theorem \ref{Lattice Counting} and the argument in Section \ref{LCST},
	\begin{align*}
		\tag{B}\sum_{q=Q}^{2Q}\mu'(A(\delta,Q,0))\leq c_1 Q\delta(1+Q^{-1}O(\sum_{\xi\in\mathbb{Z}, |\xi|\leq 2Q/\delta} d(\xi)|\hat{\mu}(\xi/p^k)|)),
	\end{align*}
	where the implied constant in the $O()$ terms does not depend on the choice of $\mu'.$ We need to sample the Fourier transform with spacing $p^{-k}$ instead of $1.$ Sampling with spacing $p^{-k}$ will not create too much more difficulties than just sampling with spacing $1,$ see Section \ref{Bonus}. More precisely, we had a decay estimate for $|\hat{\mu}(\xi)|$ which depends on the integer part of $\xi.$ Thus, if we sample $\hat{\mu}$ with spacing $p^{-k},$ then inside each integer interval (i.e. $[l,l+1],l\in\mathbb{Z}$) we have the same estimate for $\hat{\mu}$ repeated $O(p^k)$ times. In particular, we see that for each $1/2<s_1< \dim_{l^1}\mu,$ 
	\[
	\sum_{\xi\in\mathbb{Z}, |\xi|\leq 2Q/\delta} |\hat{\mu}(\xi/p^k)|\ll p^k ((Q/p^k\delta)+1)^{1-s_1}.
	\] 
	More precisely, there is a number $M>1$ (which does not depend on $k$) such that for $Q>M \times p^{k},$
	\[
	\sum_{\xi\in\mathbb{Z}, |\xi|\leq 2Q/\delta} d(\xi)|\hat{\mu}(\xi/p^k)|\leq M \times p^{s_1 k} Q^{1-s_1}\delta^{s_1-1}.
	\]
	For the problem we are considering, we need to choose $\delta=(1/Q)^{1+c'}$ for $c'>0.$ Let $\beta$ be a number such that $\beta<(2s_1-1-c'(1-s_1))/s_1.$ We also want to require that
	\[
	\beta>0.
	\]
	As $s_1>1/2,$ this can be achieved as long as $c'$ is small enough. Namely, there is a number $c>0$ such that as long as $c'<c,$ it is possible to achieve that $\beta>0.$ We choose one such value for $\beta\in (0,1).$ The choice can depend on $c'.$  In case $s_1$ is only a little bit larger than $1/2$ then $\beta$ has to be very small. We see that as long as $Q$ is much larger than $p^{k/\beta}$ we will have
	\[
	\sum_{\xi\in\mathbb{Z}, |\xi|\leq 2Q/\delta} d(\xi)|\hat{\mu}(\xi/p^k)|\leq Mp^{s_1k}Q^{1-s_1-(s_1-1)(1+c')}=MQ p^{s_1 k}Q^{-(2s_1-1-c'(1-s_1))}
	\]
	being much smaller than $Q.$ More precisely,  if $Q\geq 100^{s_1/\beta} M^{s_1/\beta} p^{k /\beta}$ we have
	\[
	Q^{-1}\sum_{\xi\in\mathbb{Z}, |\xi|\leq 2Q/\delta}d(\xi) |\hat{\mu}(\xi/p^k)|\leq \frac{1}{100}.
	\]
	Thus from the Estimate (B) in above, we see that there is a number $M'$ such that for $Q\geq M' p^{k /\beta}$
	\[
	\sum_{q=Q}^{2Q}\mu'(A(\delta,Q,0))\leq 2c_1 Q\delta.
	\]
	Here $k$ is the scaling factor between $\mu'$ and $\mu.$ The number $M'$ does not depend on $k.$ Similarly, we can also obtain a lower bound as in the proof of Theorem \ref{GCPtoKS}. Thus for $Q\geq M''p^{k/\beta}$ with constants $M''$
	\[
	c_1 Q\delta/2\leq \sum_{q=Q}^{2Q}\mu'(A(\delta,Q,0))\leq 2c_1 Q\delta.
	\]
	We can increase the values for $M', M''$ if necessary. We simply assume $M'=M''.$ Next, for technical reasons, we need to have an lower estimate for 
	\[
	\sum_{q=Q, q\text{ is prime}}^{2Q}\mu'(A(\delta,Q,0)).
	\]
	Recall Section \ref{Prime}, it is possible to see that by the prime number theorem, for $Q\geq M'p^{k/\beta}$ (again, increase the values for $M'$ if necessary, the key point here is that we have the same $\beta$ as above),
	\[
	\frac{c_1}{2}\delta\frac{Q}{\log Q}\leq \sum_{q=Q, q\text{ is prime}}^{2Q}\mu'(A(\delta,Q,0))\leq 2c_1\delta\frac{Q}{\log Q}.
	\]
	Here, the condition $Q\geq M'p^{k/\beta}$ is the main obstruction in our method to show that $o_{c'\to 0}(1)$ can be chosen to be the conjectured $O_{c'\to 0}(c').$ In fact, we are only able to show that the error term is $O_{c'\to 0}(\sqrt{c'}).$ From Example \ref{15th}, we see that the above arguments hold for the middle-15th Cantor measure. 
	\subsubsection*{Step 2: A Cantor set construction}
	We now want to proceed with the estimate of the Hausdorff dimension. The idea is to construct a compact subset of the limsup set under consideration. This will be done via a Cantor set construction. This strategy also appeared in the proof of the mass transference principle in \cite{BV}.

	Using GCP(1) for $\mu$, we see that there is a $c>0$ (which matches the $c$ appeared before without loss of generality) such that for $\delta_Q=Q^{-1-c'}$, $c'\in (0,c]$
	\[
	\sum_{q=Q}^{2Q}\mu(A(\delta_Q,q,0))\asymp Q\delta_Q
	\]
	and
	\[
	\sum_{q=Q,q\text{ is prime}}^{2Q}\mu(A(\delta_Q,q,0))\asymp Q\delta_Q/\log Q.
	\]
	Using the AD-regularity of $\mu,$ we have for $q\in [Q,2Q],$
	\begin{align*}
		&	\#\{\mathbf{k}\in\mathbb{Z}: d(\mathbf{k}/Q, supp(\mu)) \leq \delta/2   \}&\\&\ll \mu(A(\delta_Q,q,0))/(\delta_Q/Q)^s&\\&\ll \#\{\mathbf{k}\in\mathbb{Z}: d(\mathbf{k}/Q, supp(\mu)) \leq 2\delta   \}.&
	\end{align*}
	Let $B_Q=\bigcup_{q=Q, q\text{ is prime}}^{2Q} A(\delta_Q,q,0).$ Two rational numbers with denominators in $[Q,2Q]$ are either equal or have difference at least $1/(4Q^2).$\footnote{This is where we need $\gamma=0.$} In particular, if the denominators are primes numbers, then unless the two rational numbers are $0$ (or 1), otherwise, they separate with distance at least $1/(4Q^2).$ From here for large enough $Q$, it is possible to see that $B_Q$ is a disjoint union of balls centred at rational numbers with prime denominators in $[Q,2Q].$ Now, the contribution of the balls at $0$ (if at all exist) to the sum
	\[
	\sum_{q=Q, q\text{ is prime}}^{2Q}\mu(A(\delta_{Q},q,0))
	\]
	is
	\[
	\ll (\delta_Q/Q)^{s}\times \frac{Q}{\log Q}.
	\]
	Thus as long as 
	\[
	\delta_Q\gg Q^{-s/(1-s)},
	\]
	we see that
	\[
	(\delta_Q/Q)^{s}\times \frac{Q}{\log Q}
	\]
	is much smaller than
	\[
	Q\delta_Q/\log Q.
	\]
	Therefore the overlapping balls at $0$ create no problem to us.\footnote{Here $s$ is necessarily larger than $1/2$ because we need to consider spectral and thick missing digits measures.} 
	
	Our goal now is to find a lower bound for the Hausdorff dimension of $\limsup_{Q\to\infty} B_Q$ which is a lower bound for the Hausdorff dimension of $W(\psi,0)\cap supp(\mu).$ First, let $Q_1$ be a sufficiently large integer such that 
	\[
	\sum_{q=Q_1, q\text{ is prime}}^{2Q_1}\mu(A(\delta_{Q_1},q,0))\geq c_1 Q_1\delta_{Q_1}/\log Q_1,
	\]
	where $\delta_Q=Q^{-1-c'}$, $c'$ is a fixed number in $(0,c]$ and $\alpha_1>0$ is a constant. Consider $B_Q,$ which is a union of disjoint balls with radius in $[\delta_{Q_1}/Q_1,2\delta_{Q_1}/Q_1]$ (increase the value of $Q_1$ if necessary). Let $B$ be one of those balls which intersect $supp(\mu).$ We denote the collection of such balls as $\mathcal{B}_{Q_1}.$ By what we have seen, there are
	\[
	\geq \alpha'_1 Q_1\delta_{Q_1}\left(\frac{Q_1}{\delta_{Q_1}}\right)^s/\log Q_1
	\]
	such balls where $\alpha'_1$ is another constant. We consider $2B,$ the ball with the same centre as $B$ but double the radius.  We denote 
	\[\mathcal{K}_1=\bigcup_{B\in\mathcal{B}_{Q_1}} 2B.\]
	Again, by choosing $Q_1$ to be large enough, the above union is a disjoint union. Since $B\cap supp(\mu)\neq\emptyset,$ we see that $2B$ contains the support of a branch of $\mu.$ More precisely, there is a branch $\mu'$ of $\mu$ whose support is a scaled copy of $supp(\mu).$ Moreover, the length of the convex hull of $supp(\mu')$ is at least
	\[
	\frac{\delta_{Q_1}}{2pQ_1}.
	\]
	In fact, the convex hull of $supp(\mu)$ is $[0,1]$ and the convex hull of $supp(\mu')$ is some $p$-adic interval, i.e. an interval of length $p^{-k},k\in\mathbb{N}$ whose end points are rationals with denominator $p^{-k}.$ As $\mu'$ is again spectral and thick, it has GCP(1). Moreover, by the first step of this proof, we can choose a large enough $Q_2$ (which can be chosen to be  $\asymp(Q_1/\delta_{Q_1})^{1/\beta}$) such that
	\[
	\sum_{q=Q_2, q\text{ is prime}}^{2Q_2} \mu'(A(\delta_{Q_2}, q,0))\asymp Q_2 \delta_{Q_2}/\log Q_2.
	\]
	In particular, we have
	\[
	\sum_{q=Q_2,q\text{ is prime}}^{2Q_2} \mu'(A(\delta_{Q_2}, q,0))\geq c_1 Q_2\delta_{Q_2}/\log Q_2.
	\]
	$\mu'$ is a scaled copy of $\mu$ and it is $s$-dimensional AD-regular as well. The scaling factor is $p^k$ and for all $x\in supp(\mu')$ and all small enough $R,$ $\mu'(B(x,R))\asymp p^{ks} R^s.$ Therefore we have for a constant $\alpha'_2>0$ (depends on $Q_1$, which can be chosen to be $\alpha'_1 (\delta_{Q_1}/Q_1)^s$) such that $2B$ contains
	\[
	\geq \alpha'_2Q_2\delta_{Q_2} \left(\frac{Q_2}{\delta_{Q_2}}\right)^s/\log Q_2
	\]
	many disjoint balls of radius in $[\delta_{Q_2}/Q_2,2\delta_{Q_2}/Q_2]$ which also intersect $supp(\mu').$ We can perform the above steps for each $B\in \mathcal{B}_{Q_1}$ and obtain smaller disjoint balls. We collect all the smaller disjoint balls we obtained, double them and then denote their union as $\mathcal{K}_2.$ We can go on constructing $\mathcal{K}_3,\mathcal{K}_4,\dots.$  We see that
	\[
	\mathcal{K}_1\supset \mathcal{K}_2\supset \mathcal{K}_3\dots
	\]
	There are integers $Q_1<Q_2<Q_3\dots$ and positive numbers $\alpha'_1, \alpha'_2,\alpha'_3,\dots$ such that for $j\geq 1,$
	$
	\mathcal{K}_j
	$
	is a disjoint union of at least
	\[
	\prod_{i=1}^{j} \alpha'_j Q_j\delta_{Q_j} \left(\frac{Q_j}{\delta_{Q_j}}\right)^s/\log Q_j
	\]
	many balls of radius in $[2\delta_{Q_j}/Q_j,4\delta_{Q_j}/Q_j].$ Also, $\alpha'_1$ is an absolute constant, and $\alpha'_j$ depends on $Q_{j-1}$ for $j\geq 2.$ Now we will choose for $j\geq 2,$ $Q_{j}=[ (Q_{j-1}/\delta_{Q_{j-1}})^{1/\beta''}].$ Where $0<\beta''<\beta$ can be chosen to be arbitrarily small. Let $\mathcal{K}(Q_1,Q_2,\dots)=\cap_{j\geq 1}\mathcal{K}_j.$ This is a compact set. 
	\subsubsection*{Step 3: A Borel probability measure $\nu$}
	We now analyse the set $\mathcal{K}.$ Ultimately, we want to estimate its Hausdorff dimension. The set $\mathcal{K}$ was obtained by a certain Cantor set construction. The general formula for the Hausdorff dimension should be known. However, we have not found a sufficiently convenient formula to use. Therefore we provide details for a lower estimate of $\Haus \mathcal{K}.$ We do this via the mass distribution principle, lemma \ref{MDP}. We will construct a probability measure supported by $\mathcal{K}$ in this step and study its Frostman property in the next step.
	
	 First,  for each $j\geq 1,$ $\mathcal{K}_{j}$ is disjoint union of balls. We denote $\mathcal{D}_1$ as the collection of the disjoint balls in $\mathcal{K}_1.$ We enumerate them by $B_1,\dots,B_{n_1}$ where $n_1=\#\mathcal{D}_1.$ For each $B_j, j\in\{1,\dots, n_1\},$ we see that $B_j\cap \mathcal{K}_2$ is a disjoint union of $n_{2,j}$ many balls. We enumerate them by $B_{j,1},\dots,B_{j,n_{2,j}}.$ By dropping some of them we can assume that there is a number $n_2=n_{2,j}$ for each $j$ and $n_2\geq \alpha'_2Q_2\delta_{Q_2}(Q_2/\delta_{Q_2})^s.$  This procedure can go on indefinitely and we obtained a coding system for a subset of $\mathcal{K}.$ This is in some sense a homogeneous version of $\mathcal{K}.$ We still use $\mathcal{K}$ to denote this subset.  Now we assign a Borel probability measure on $\mathcal{K}$ by giving measures for 'cylinder sets', i.e. sets of form $B_{j_1\dots j_k}, k\in\mathbb{N}$ and for each $i\in\{1,\dots,k\},$ $j_i\in \{1,\dots, n_{i}\}.$ We define
	\[
	\nu(B_{j_1\dots j_k})=\frac{1}{n_1\dots n_k}.
	\]
	\subsubsection*{Step 4: A Frostman property of $\nu$}
	Now let $r\in (0,1)$ be a number. We want to estimate $\nu(B_r)$ for any $r$-ball $B_r.$ We assume that $r$ is small such that there is a number $j\geq 4$ such that 
	\[
	\frac{\delta_{Q_{j}}}{Q_{j}}<r\leq \frac{\delta_{Q_{j-1}}}{Q_{j-1}}.
	\]
	Then $B_r$ intersects at most one ball forming $\mathcal{K}_{j-1}.$ Since balls forming $\mathcal{K}_j$ are separated with distance at least $1/4Q^2_j,$ we first see that $B_r$ intersects at most
	\[
	1+16 r Q^2_j
	\]
	many of them.  However, balls forming $\mathcal{K}_j$ are $\delta_{Q_j}/Q_j$ close to $supp(\mu).$ In particular, we see that $B_r$ intersects at most
	\[
	100(1+16 r Q^2_j)^s
	\]
	many balls forming $\mathcal{K}_j.$ From here we see that
	\[
	\log \nu(B_r)\leq \log \left(100(1+16rQ^2_j)^s \frac{1}{n_1\dots n_j}\right)=\log 100(Q^{-2}_j+16r)^s+\log \frac{Q^{2s}_j}{n_1\dots n_j}.
	\]
	We want to estimate
	\[
	\frac{\log \nu(B_r)}{\log r}.
	\]
	Recall that $\alpha'_{j+1}=\alpha'_{j}(\delta_{Q_j}/Q_{j})^s$ and $Q_{j+1}=[(Q_{j}/\delta_{Q_j})^{1/\beta''}]$ for $j\geq 1.$ Then for large enough $j,$ we define a number $A$ to be equal to
	$
	\log n_1\dots n_j/Q^{2s}_j.
	$
	The we have
	\begin{align*}
		&A\geq \log Q^{-2s}_j\prod_{i=1}^{j} \alpha'_j Q_j\delta_{Q_j} \left(\frac{Q_j}{\delta_{Q_j}}\right)^s\frac{1}{\log Q_j}\\&= O_{r\to 0}(1)+\log \prod_{i=1}^j \frac{1}{\log Q_j}+\\&\log (\alpha'_1)^j \frac{(Q^{2+c'}_1\dots Q^{2+c'}_j)^s}{Q^{c'}_1\dots Q^{c'}_j}\frac{Q_j^{-2s}}{((Q^{2+c'}_1)^{j-1}(Q^{2+c'}_2)^{j-2}\dots (Q^{2+c'}_{j-1}))^s}&
		\\&=O_{r\to 0}(1)+\log \prod_{i=1}^j \frac{1}{\log Q_j}+\\&\log (\alpha'_1)^j \frac{(Q^{2+c'}_1\dots Q^{2+c'}_{j-1})^s}{Q^{c'}_1\dots Q^{c'}_{j-1}}\frac{Q_j^{c's-c'}}{((Q^{2+c'}_1)^{j-1}(Q^{2+c'}_2)^{j-2}\dots (Q^{2+c'}_{j-1}))^s}&
		\\&=O_{r\to 0}(1)+\log \prod_{i=1}^j \frac{1}{\log Q_j}+\log (\alpha'_1)^j+\\&\log \frac{1}{Q^{c'}_1\dots Q^{c'}_{j-1}}\frac{Q_j^{c's-c'}}{((Q^{2+c'}_1)^{j-2}(Q^{2+c'}_2)^{j-3}\dots (Q^{2+c'}_{j-2}))^s}.&
	\end{align*}
	The very last term is smaller than zero. Thus we see that
	\begin{align*}\tag{BB}
		\frac{-A}{\log r}&\geq \frac{-O_{r\to 0}(1)-j \log \alpha'_1-j\log\log Q_j}{\log r}+\frac{\log \frac{1}{Q^{c'}_1\dots Q^{c'}_{j-1}}\frac{Q_j^{c's-c'}}{((Q^{2+c'}_1)^{j-2}(Q^{2+c'}_2)^{j-3}\dots (Q^{2+c'}_{j-2}))^s}}{\log (Q_{j-1}/\delta_{Q_{j-1}})}\\&\geq \frac{-O_{r\to 0}(1)-j \log \alpha'_1-j\log\log Q_j}{\log r}+\\&\frac{\log \frac{1}{Q^{c'}_1\dots Q^{c'}_{j-1}}\frac{1}{((Q^{2+c'}_1)^{j-2}(Q^{2+c'}_2)^{j-3}\dots (Q^{2+c'}_{j-2}))^s}}{\log (Q_{j-1}/\delta_{Q_{j-1}})}+\frac{c's-c'}{\beta''}.&
	\end{align*}
	As $1/r\geq Q_{j-1}/\delta_{Q_{j-1}},$ we see that
	\[
	|\log r|\geq (2+c')\log Q_{j-1}.
	\] 
	Observe that for integers $l,$
	\[\log Q_{l}=(1+o_{l\to\infty}(1))\left(\frac{2+c'}{\beta''}\right)^{l-1} \log Q_1.
	\]
	This implies that for $j\to\infty,$
	\[
	\frac{j}{\log r}\to 0
	\]	
	and
	\[
	\frac{j\log\log Q_j}{\log r}\to 0.
	\]
	Thus the first term in the last line of (BB) tends to $0$ for $r\to 0.$ The second term is complicated. We see that
	\begin{align*}
		&\frac{\log \frac{1}{Q^{c'}_1\dots Q^{c'}_{j-1}}\frac{1}{((Q^{2+c'}_1)^{j-2}(Q^{2+c'}_2)^{j-3}\dots (Q^{2+c'}_{j-2}))^s}}{\log (Q_{j-1}/\delta_{Q_{j-1}})}\\&=\frac{\log Q^{-c'}_{j-1}}{\log (Q^{2+c'}_{j-1})}+\frac{\log \frac{1}{Q^{c'}_1\dots Q^{c'}_{j-2}}\frac{1}{((Q^{2+c'}_1)^{j-2}(Q^{2+c'}_2)^{j-3}\dots (Q^{2+c'}_{j-2}))^s}}{\log (Q_{j-1}/\delta_{Q_{j-1}})}\\
		&=\frac{-c'}{2+c'}+\frac{\log \frac{1}{Q^{c'}_1\dots Q^{c'}_{j-2}}\frac{1}{((Q^{2+c'}_1)^{j-2}(Q^{2+c'}_2)^{j-3}\dots (Q^{2+c'}_{j-2}))^s}}{\log (Q_{j-1}/\delta_{Q_{j-1}})}.
	\end{align*}
	As $Q_{j+1}=[(Q_j/\delta_{Q_j})^{1/\beta''}],$ we see that the absolute value of the second term in the last line above is at most
	\[
	\frac{c'(a+a^2+\dots)+s(2+c')(a+2a^2+3a^3+\dots)}{2+c'}
	\]
	where $a=\beta''/(2+c')<1/2.$ Let
	\[
	H=1+2\frac{1}{2}+3\frac{1}{2^2}+\dots<\infty.
	\]
	Then we see that ($s<1$)
	\[
	c'(a+a^2+\dots)+s(2+c')(a+2a^2+3a^3+\dots)\leq Hc'\frac{\beta''}{2+c'}+sH\beta''\leq 2H\beta''.
	\]
	Collecting all the estimates for the terms in (BB), we see that
	\[
	\frac{-A}{\log r}\geq \frac{-c'}{2+c'}+o_{r\to 0}(1)-\frac{c'-c's}{\beta''}-\frac{2H\beta''}{2+c'}.\tag{BBB}
	\]

	Now we need to consider 
	\[
	\frac{\log 100(Q^{-2}_j+16r)^s}{\log r}=s+\frac{\log 100(16+(Q^{-2}_jr^{-1}))^s}{\log r}.
	\]
	If $r>Q^{-2}_j,$ then 
	\[
	\left|\frac{\log 100(16+(Q^{-2}_jr^{-1}))^s}{\log r}\right|\leq \frac{\log (100\times 20)}{|\log r|}.
	\]
	Thus we see that
	\[
	\frac{\log \nu(B_r)}{\log r}\geq s-\frac{c'}{2+c'}-\frac{c'-c's}{\beta''}-\frac{2H\beta''}{2+c'}+o_{r\to 0}(1).
	\]
	If $r\leq Q^{-2}_{j},$ then $B_r$ intersects at most $100$ many of balls in $\mathcal{K}_j.$ From here we see that
	\[
	\nu(B_r)\leq 100 \frac{1}{n_1\dots n_j}.
	\]
	Then we have
	\[
	\frac{\log \nu(B_r)}{\log r}\geq \frac{\log 100+\log n_1\dots n_j}{|\log r|}\geq \frac{\log 100 +\log n_1\dots n_j}{\log (Q_j^{2+c'})}.
	\]
	As we did in above
	\[
	\log n_1\dots n_j\geq O_{r\to 0}(1)+\log (\alpha'_1)^j \frac{(Q^{2+c'}_1\dots Q^{2+c'}_j)^s}{Q^{c'}_1\dots Q^{c'}_j}\frac{1}{((Q^{2+c'}_1)^{j-1}(Q^{2+c'}_2)^{j-2}\dots (Q^{2+c'}_{j-1}))^s}.
	\]
	From here, we can argue as in above to see that
	\begin{align*}
		\frac{\log n_1\dots n_j}{\log (Q^{2+c'}_j)}&\geq o_{r\to 0}(1)+s-\frac{c'}{2+c'}-\frac{2H\beta''}{2+c'}.
	\end{align*}
	Therefore we see that
	\[
	\frac{\log \nu(B_r)}{\log r}\geq s-\frac{c'}{2+c'}-\frac{c'-c's}{\beta''}-\frac{2H\beta''}{2+c'}+o_{r\to 0}(1).
	\]
	Thus by the mass distribution principle (Lemma \ref{MDP}), we see that
	\[
	\Haus \mathcal{K}\geq s-\frac{c'}{2+c'}-\frac{c'-c's}{\beta''}-\frac{2H\beta''}{2+c'}.
	\]
	We have now the freedom to choose the value of $\beta''.$ Since the requirement of $\beta''$ is that $0<\beta''<\beta,$ it is possible to see that
	\[
	\Haus \mathcal{K}\geq s-O_{c'\to 0}(\sqrt{c'}).
	\]
	This concludes the proof.
\end{proof}

\begin{proof}[Proof of Theorem \ref{Jarnik4}]
	Let $\rho<s-c'/(2+c')$ be a positive number. Let $r>0$ be a small number. We consider a countable collection of  intervals $\mathcal{C}=\mathcal{C}(r,\rho)$ with 
	\[
	\sum_{I\in\mathcal{C}}|I|^\rho<1.
	\]
	Our goal is to show that $\cup_{I\in\mathcal{C}}I$ does not contain $supp(\mu)\cap W(\psi,0)$ as long as $r$ is small enough. Assuming this for now, we see that by definition, $\Haus W(\psi,0)\cap supp(\mu)\geq \rho.$ As $\rho$ can be chosen to be close to $s-c'/(2+c'),$ the result concludes.
	
	We follow an argument by Besicovitch (\cite{B34}). We will choose an increasing sequence of integers $10<Q_1<Q_2<\dots$ satisfying some properties which will be clarified later. First, we fix a number $M=M(c',\mu,\rho).$ Then according to this value, we can determine $Q_1=Q_1(M).$  After this, we can determine $Q_{i+1}=Q_{i+1}(Q_i)$ inductively.
	
	Let $r\leq (1/Q_1)^{2+c'}.$ Let $\mathcal{F}_2$ be the collection of intervals of length $2(1/Q_2)^{2+c'}$ centred at each rational numbers with prime denominators in $[Q_2,2Q_2]$ and which intersect $supp(\mu).$  Using GCP(1), we see that there is a $c>0$ such that for $\delta_Q=Q^{-1-c'}$, $c'\in (0,c]$
	\[
	\sum_{q=Q, q\text{ is prime}}^{2Q}\mu(A(\delta_Q,q,0))\asymp Q\delta_Q/\log Q.
	\]Using the AD-regularity of $\mu,$ we have for $q\in [Q,2Q],$
	\begin{align*}
		&\#\{\mathbf{k}\in\mathbb{Z}: d(\mathbf{k}/Q, supp(\mu)) \leq \delta/2 \} & \\
		&\ll\mu(A(Q^{-2-c'},q,0))/(Q^{-1-c'}/Q)^s&\\&\ll \#\{\mathbf{k}\in\mathbb{Z}: d(\mathbf{k}/Q, supp(\mu)) \leq 2\delta   \}.&
	\end{align*}
	Thus we see that
	\[
	\#\mathcal{F}_2\gg \frac{Q_2\delta_{Q_2}}{\log Q_2}\left(\frac{1}{Q^{2+c'}_2}\right)^s\gg\frac{Q_2^{2s+c'(s-1)}}{\log Q_2}.\tag{I}
	\]
	
	First, let us consider the intervals in $\mathcal{C}$ whose lengths are at least $1/Q^{\beta}_2.$ (Here $\beta$ is as in the (Step 1) in the proof of Theorem \ref{Jarnik2}. It can be chosen according to $s_1,c'$. We will later discuss this matter.)  Denote this collection as $\mathcal{C}_1.$ Then we see that
	\[
	\sum_{I\in\mathcal{C}_1}|I|^{s}=\sum_{I\in\mathcal{C}_1}|I|^{\rho}|I|^{s-\rho}<\frac{1}{Q^{(2+c')(s-\rho)}_1}.
	\]
	For each interval $I\in\mathcal{C}_1,$ we see that there are $\ll |I|^s Q^{2s+c'(s-1)}_2/\log Q_2$ many intervals in $\mathcal{F}_2$ intersecting $I.$ This is because $Q_2\geq |I|^{1/\beta}$ and the local counting property in (Step 1) applies. Thus the number of intervals in $\mathcal{F}_2$ intersecting intervals in $\mathcal{C}_1$ is
	\[
	\ll \sum_{I\in\mathcal{C}_1}  |I|^sQ^{2s+c'(s-1)}_2/\log Q_2\leq \frac{Q^{2s+c'(s-1)}_2}{\log Q_2}\frac{1}{Q^{(2+c')(s-\rho)}_1}.\tag{*}
	\]
	
	Next, we consider the intervals in $\mathcal{C}$ with length in between $1/(Q^\beta_2)$ and $1/Q^2_2.$\footnote{We note that $\beta<1.$} We denote this collection as $\mathcal{C}_2.$ We can not use the local counting property as above. However, since the intervals in $\mathcal{F}_2$ have centres which are at least $1/4Q^2_2$ away from each other, we see that for each $I\in\mathcal{C}_2$, there are
	\[
	\ll (|I|Q^2_2)^s
	\]
	many intervals in $\mathcal{F}_2$ that can intersect $I.$ Thus the number of intervals in $\mathcal{F}_2$ intersecting intervals in $\mathcal{C}_2$ is
	\[
	\ll  \sum_{I\in\mathcal{C}_2} (|I|Q^2_2)^s\leq Q^{2s}_2\sum_{I\in\mathcal{C}_2}|I|^{s-\rho}|I|^{\rho}\leq Q^{2s}_2\frac{1}{Q^{\beta(s-\rho)}_2}.\tag{**}
	\]
	
	Finally, we consider the collection $\mathcal{C}_3$ of intervals in $\mathcal{C}$ with lengths at least  $1/Q^{2+c'}_2$ and at most $1/Q^2_2.$  We see that each interval $I\in\mathcal{C}_3$ can intersect $O(1)$ many intervals in $\mathcal{F}_2.$ Here the $O(1)$ term is an absolute constant. Denote $F_2$ as the number of intervals in $\mathcal{F}_2$ which also intersect intervals in $\mathcal{C}_3.$ We see that
	\[
	F_2 \left(\frac{1}{Q^{2+c'}_2}\right)^\rho\leq O(1) \sum_{I\in\mathcal{C}_3} |I|^\rho=O(1).
	\]
	The above is deduced by a double counting argument. The leftmost term comes from the following consideration. For each interval in $\mathcal{F}_2$ which also intersects intervals in $\mathcal{C}_3,$ we choose one such intersecting interval in $\mathcal{C}_3$ and count its length powered by $\rho.$ We then do this for each of those $F_2$ many intervals in $\mathcal{F}_2$ and in total we have counted a quantity which is at least the leftmost side of the inequality above. Each interval in $\mathcal{C}_3$ is involved in at most $O(1)$ times in the above counting argument because it can intersect $O(1)$ many intervals in $\mathcal{F}_2.$ This gives the above inequality.
	
	Thus we see that
	\[
	F_2\ll Q^{(2+c')\rho}_2.\tag{***}
	\]
	
	 From (*),(**) and (***), we conclude that  the number of intervals in $\mathcal{F}_2$ which intersect intervals in $\mathcal{C}$ with length at least $1/(Q^{2+c'}_2)$ is 
	 \begin{align*}
	 &\ll \frac{Q^{2s+c'(s-1)}_2}{\log Q_2}\frac{1}{Q^{(2+c')(s-\rho)}_1}+Q^{2s}_2\frac{1}{Q^{\beta(s-\rho)}_2}+Q^{(2+c')\rho}_2&\\
	 &= \frac{Q^{2s+c'(s-1)}_2}{\log Q_2}\left( \frac{1}{Q^{(2+c')(s-\rho)}_1}+\frac{Q^{c'(1-s)}_2\log Q_2}{Q^{\beta(s-\rho)}_2}+\frac{Q^{(2+c')\rho}_2\log Q_2}{Q^{2s+c'(s-1)}_2}\right).&\tag{@}
	 \end{align*}
	 Now, we want to pose the condition that (so that the second term in the bracket has negative power on $Q_2$)
	 \[
	 \beta(s-\rho)>c'(1-s)\tag{Condition 1}.
	 \]
	 We will discuss about this condition at the end of the proof. In particular, we will show that this condition is not empty, i.e. it can be satisfied.  Observe that $\rho<s-c'/(2+c').$ We see that
	 \[
	 2s+c'(s-1)>(2+c')\rho.
	 \]
	 Thus, there is a (possibly very large) number $M>2/\beta$ such that if $Q_2>Q^M_1,$ we have
	 \[
	 \frac{Q^{c'(1-s)}_2\log Q_2}{Q^{\beta(s-\rho)}_2}+\frac{Q^{(2+c')\rho}_2\log Q_2}{Q^{2s+c'(s-1)}_2}<\frac{1}{Q^{(2+c')(s-\rho)}_1}.
	 \] 
	 Then we can write (@) in a bit easier way,
	 \[
	 \frac{Q^{2s+c'(s-1)}_2}{\log Q_2} \frac{2}{Q^{(2+c')(s-\rho)}_1}.\tag{@'}
	 \]
	 Let $\mathcal{F}'_2$ be the intervals that do not survive after the above step. Namely, we consider those intervals in $\mathcal{F}_2$ which intersect intervals in $\mathcal{C}$ with length at least $1/Q^{2+c'}_2.$ The above arguments tell us that there are numbers $M,M'>0$ such that for $Q_2>Q^M_1,$
	 \[
	 \#\mathcal{F}'_2\leq \frac{Q^{2s+c'(s-1)}_2}{\log Q_2}\frac{M'}{Q^{(2+c')(s-\rho)}_1}.\tag{II}
	 \]
	 This is much smaller than $\#\mathcal{F}_2$. In fact, we can choose $Q_1$ to be large enough so that $\#\mathcal{F}'_2$ is at most $0.01\#\mathcal{F}_2.$ We fix such a value for $Q_1.$ After this step,  let $\mathcal{F}''_2=\mathcal{F}_2\setminus \mathcal{F}'_2.$ Of course, $\mathcal{F}''_2$ depends on $Q_2.$ We will choose a value for $Q_2$ in the next paragraph.
	 
	 Our aim is to iterate the above argument. 
	Let $Q_3>Q^M_2$ be an integer.  Let $\mathcal{F}_3$ be the collection of intervals of length $2(1/Q_3)^{2+c'}$ centred at each rational numbers with prime denominators in $[Q_3,2Q_3]$ and which intersect $supp(\mu).$  Now, we take $\mathcal{F}_{3,1}\subset\mathcal{F}_3$ be the collection of intervals in $\mathcal{F}_3$ which are contained intervals in $\mathcal{F}''_2.$ By the local counting property in (Step 1), we see that as long as $Q_3\gg Q^{1/\beta}_2,$ we have
	\[
	\#\mathcal{F}_{3,1}\gg \#\mathcal{F}''_2 \left(\frac{1}{Q^{2+c'}_2}\right)^s\frac{Q_3^{2s+c'(s-1)}}{\log Q_3}\gg \frac{1}{Q^{c'}_2\log Q_2}\frac{Q_3^{2s+c'(s-1)}}{\log Q_3}.
	\]
	For the last inequality, we have used the fact that $\#\mathcal{F}'_2$ is much smaller than $\#\mathcal{F}_2$ (see (I), (II)). Because we forced $M>2/\beta,$ we see that $Q_3\gg Q^{1/\beta}_2$ is automatically satisfied if $Q_3\geq Q^M_2.$ Now we apply the arguments before with $Q_2,Q_3$ replacing $Q_1,Q_2.$ We have considered intervals in $\mathcal{C}$ with length at least $1/(Q^{2+c'}_2).$ So we are indeed in the place of applying those arguments. As a result, we can find a collection $\mathcal{F}'_3$ of intervals in $\mathcal{F}_3$ which intersect intervals in $\mathcal{C}$ with length in between $1/Q^{2+c'}_2$ and $1/Q^{2+c'}_3.$ We see that
	\[
	\#\mathcal{F}'_3\leq \frac{Q^{2s+c'(s-1)}_3}{\log Q_3}\frac{M'}{Q^{(2+c')(s-\rho)}_2}.
	\]
	Since $s-\rho>c'/(2+c')$, we see that
	$
	Q^{(2+c')(s-\rho)}_2
	$ is much larger than $Q^{c'}_2.$ Thus $\#\mathcal{F}'_3$ is much smaller than $\#\mathcal{F}_{3,1}.$ In fact, we can choose $Q_2$ to be so large that $\#\mathcal{F}'_3$ is at most $0.01\#\mathcal{F}_{3,1}.$ We fix the value for $Q_2.$ Let $\mathcal{F}''_3=\mathcal{F}_{3,1}\setminus\mathcal{F}'_3.$ It is not empty as long as $Q_3\geq Q^M_2.$ 
	
	We can now perform the above argument one more time. Let $Q_4>Q^M_3$ be an integer. We can find $\mathcal{F}_4$ , the collection of intervals of length $2(1/Q_4)^{2+c'}$ centred at each rational numbers with prime denominators in $[Q_4,2Q_4]$ and which intersect $supp(\mu).$ Then we can find the subcollection $\mathcal{F}_{4,1}\subset\mathcal{F}_4$ containing the intervals which are contained in intervals in $\mathcal{F}''_3.$ We have
	\[
	\#\mathcal{F}_{4,1}\gg \#\mathcal{F}''_3 \left(\frac{1}{Q^{2+c'}_3}\right)^s\frac{Q_4^{2s+c'(s-1)}}{\log Q_4}\gg \frac{1}{Q^{c'}_2\log Q_2}\frac{1}{Q^{c'}_3\log Q_3}\frac{Q_4^{2s+c'(s-1)}}{\log Q_4}.
	\]
	Again, we can find a collection $\mathcal{F}'_4$ of intervals in $\mathcal{F}_4$ which intersect intervals in $\mathcal{C}$ with length in between $1/Q^{2+c'}_3$ and $1/Q^{2+c'}_4.$ We see that
	\[
	\#\mathcal{F}'_4\leq \frac{Q^{2s+c'(s-1)}_4}{\log Q_4}\frac{M'}{Q^{(2+c')(s-\rho)}_3}.
	\]
	We can choose $Q_3$ to be large enough so that $\#\mathcal{F}'_4$ is at most $0.01\#\mathcal{F}_{4,1}.$ We now fix the value for $Q_3.$
	
	By repeating the above arguments, we can find integers $Q_4<Q_5<Q_6<Q_7\dots$ and collections of intervals $\mathcal{F}''_5,\mathcal{F}''_6,\mathcal{F}''_7\dots.$ For integers $i<j,$ we have
	\[
	\cup_{I\in\mathcal{F}''_{j}}I\subset\cup_{I\in\mathcal{F}''_{i}}I.
	\] 
	Moreover, for each integer $i,$ the intervals in $\mathcal{F}''_i$ are disjoint with intervals in $\mathcal{C}$ with length at least $1/(Q^{2+c'}_i).$ As the sequence of sets
	\[
	C_i=\cup_{I\in\mathcal{F}''_{j}}I,i\geq 1
	\]
	is a decreasing family of non-empty compact sets,  we see that
	\[
	C_{\infty}=\cap_{i\geq 1}C_i\neq\emptyset.
	\]
	Obviously, $C_\infty\subset W(\psi,0)\cap supp(\mu).$ On the other hand, we see that $C_{\infty}$ cannot intersect non-trivial intervals in $\mathcal{C}.$ If not, let $I\in\mathcal{C}$ such that $I\cap C_{\infty}\neq\emptyset.$ Then for each integer $i\geq 1,$ as $C_{\infty}\subset C_i,$ we see that $|I|<1/(Q^{2+c'}_{i}).$ Therefore $|I|=0$. Thus we see that intervals in $\mathcal{C}$ cannot cover $W(\psi,0)\cap supp(\mu)$ as desired.
	
	Remember that the above arguments are based on (Condition 1). Since $\rho<s-c'/(2+c').$ We see that (Condition 1) is satisfied if
	\[
	\frac{\beta}{1-s}>2+c'.
	\]
	Let $s_1=\dim_{l^1}\mu.$ Recall that we can choose $\beta$ to be smaller but arbitrarily close to $(2s_1-1-c'(1-s_1))/s_1.$ (See (Step 1) in the proof of Theorem \ref{Jarnik2}.) We see that the above can be achieved if
	\[
	\frac{2s_1-1-c'(1-s_1)}{s_1(1-s)}>2+c'.
	\]
	This is the case if
	\[
	ss_1>\frac{1+c'}{2+c'}.
	\]
	We want the above to hold for some $c'>0.$ Thus we need $s_1s>1/2.$ Recall Section \ref{Simple}. We see that for large $p,$ 
	\[
	s_1\geq \frac{1}{2}+\Theta\left(\frac{1}{\log p}\right)
	\]
	and
	\[
	s=\frac{\log (p-t)}{\log p}=1-O\left(\frac{1}{p\log p}\right).
	\]
	Thus we see that 
	\[
	s_1s=\frac{1}{2}+\Theta((\log p)^{-1})-O((p\log p)^{-1})>1/2
	\] 
		for large enough $p.$ This proves the theorem.
\end{proof}

\section{A weak Khinchine divergence result}\label{divergence}

We will prove the following theorem.
\begin{thm}
	Let $\mu$ be a missing digits measure with $\dim_{l^1}\mu>1/2,$ for example, the middle-$15$th Cantor measure. Then $\mu(W(\psi))=1$ for $\psi:q\to 1/(q\log\log q).$
\end{thm}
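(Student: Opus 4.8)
The plan is to run the divergence half of the Borel--Cantelli lemma (Lemma \ref{Borel}) for the events $E_q=A(\psi(q),q,0)=\{x:\|qx\|\le\psi(q)\}$, obtaining $\mu(\limsup_q E_q)>0$, and then to promote this to full measure by a self-similarity argument. First note that the series diverges: since $\log\log q\le\log q$ for all large $q$, we have $\sum_q\psi(q)\ge\sum_q 1/(q\log q)=\infty$. Moreover $\psi$ is non-increasing and, for every $c>0$, $\psi(q)=1/(q\log\log q)\gg q^{-1-c}$ once $q$ is large; so on each dyadic block $[Q,2Q]$ the hypothesis of GCP(1) holds with $\delta=\psi(Q)$ (this is where $\dim_{l^1}\mu>1/2$ enters, via Theorem \ref{GCPtoKS}). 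Because $\psi(2Q)\asymp\psi(Q)$, monotonicity sandwiches $\sum_{q=Q}^{2Q}\mu(E_q)$ between $\sum_{q=Q}^{2Q}\mu(A(\psi(2Q),q,0))$ and $\sum_{q=Q}^{2Q}\mu(A(\psi(Q),q,0))$, both of which are $\asymp Q\psi(Q)\asymp\sum_{q=Q}^{2Q}\psi(q)$. Summing over blocks gives $\sum_{q\le Q}\mu(E_q)\asymp\sum_{q\le Q}\psi(q)\to\infty$.

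The heart of the matter is the quasi-independence estimate $\sum_{s,t\le Q}\mu(E_s\cap E_t)\ll\big(\sum_{q\le Q}\mu(E_q)\big)^2$; the diagonal $s=t$ contributes $\sum_{q\le Q}\mu(E_q)=o\big((\sum\mu(E_q))^2\big)$ and is harmless. As in the proof of Theorem \ref{Lattice Counting}, I would dominate $\mathbf 1_{A(\delta_q,q,0)}$ by a Schwartz bump $B_q$ with $\hat B_q$ supported in $[-2q/\delta_q,2q/\delta_q]$ ($\delta_q=\psi(q)$), group the $B_q$ into dyadic packets $D_k=\sum_{2^k\le q<2^{k+1}}B_q$, and expand $\int D_kD_{k'}\,d\mu=\sum_\xi\widehat{D_kD_{k'}}(\xi)\overline{\hat\mu(\xi)}$. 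The zero-frequency term reproduces the Lebesgue overlap $\int_{[0,1]}D_kD_{k'}\,dx$, which by the classical Erd\H{o}s--Vaaler estimate for monotone $\psi$ is $\ll\big(\sum_{q\in\text{packet }k}\delta_q\big)\big(\sum_{q\in\text{packet }k'}\delta_q\big)$ plus a $\gcd$-term that sums to $\ll\sum_q\delta_q$. For the non-zero frequencies one uses $|\widehat{D_kD_{k'}}(\xi)|\le d(\xi)^{O(1)}\min\{2^k\delta_{2^{k'}},2^{k'}\delta_{2^k}\}$ together with $\dim_{l^1}\mu>1/2$, i.e. $\sum_{|\xi|\le R}|\hat\mu(\xi)|\ll R^{1-s_1}$ for some $s_1>1/2$, to bound the double sum by a constant multiple of the main term. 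The within-packet off-diagonal overlaps ($s\ne t$, $s,t\in[2^k,2^{k+1})$) are treated directly: $A_s\cap A_t$ lies in an $O(\delta/Q)$-neighbourhood of $\asymp Q$ rationals, so AD-regularity of $\mu$ yields $\mu(A_s\cap A_t)$ bounded by the expected product. Assembling these pieces (following the bookkeeping of \cite[Section 4]{ACY}) gives the quasi-independence bound, and Lemma \ref{Borel} then produces a constant $\kappa>0$ with $\mu(\limsup_q E_q)\ge\kappa$.

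To reach full measure, fix $k\ge1$ and a generation-$k$ cylinder $I=f_w([0,1])$, so the similarity $g=f_w$, $g(x)=p^{-k}x+t_w$ with $p^kt_w\in\mathbb Z$, satisfies $g_*\mu=\mu|_I/\mu(I)$. If $q=p^kq'$ then $\|q\,g(x)\|=\|q'x+p^kq't_w\|=\|q'x\|$, so $g^{-1}(W(\psi,0))\supseteq W(\tilde\psi_k,0)$ where $\tilde\psi_k(q')=\psi(p^kq')$. Now $\tilde\psi_k$ is again non-increasing, $\tilde\psi_k(q')\gg (q')^{-1-c}$, $\log\log(p^kq')\asymp\log\log q'$ so $\sum_{q'}\tilde\psi_k(q')=\infty$, and the GCP(1) constants used above depend on $\mu$ only; hence the argument of the previous two paragraphs applies verbatim to $\tilde\psi_k$ and gives $\mu(g^{-1}(W(\psi,0)))\ge\mu(W(\tilde\psi_k,0))\ge\kappa$ with the \emph{same} $\kappa$. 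Therefore $\mu(W(\psi,0)\cap I)=\mu(I)\,\mu(g^{-1}(W(\psi,0)))\ge\kappa\,\mu(I)$ for every cylinder $I$. If $\mu(W(\psi,0))<1$, the martingale (density) theorem for $\mu$ along the filtration of generation-$k$ cylinders would furnish an $I$ with $\mu(W(\psi,0)\cap I)<\kappa\,\mu(I)$ --- a contradiction. Hence $\mu(W(\psi,0))=1$, which is the theorem (and, with $\mu=\mu_{15}$, the divergence part of Theorem \ref{Main}(2)).

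I expect the main obstacle to be the non-zero-frequency part of Step 2: showing that it is genuinely subordinate to the Lebesgue main term when $\delta_q$ exceeds the critical scale $1/q$ only by the slowly varying factor $\log\log q$. This marginal slack is exactly what the estimate $\sum_{|\xi|\le R}|\hat\mu(\xi)|\ll R^{1-s_1}$ with $s_1=\dim_{l^1}\mu>1/2$ is designed to absorb, and it is the reason the hypothesis is stated in terms of $\dim_{l^1}\mu$ rather than $\Haus\mu$; getting the error bookkeeping to close uniformly over all pairs of dyadic packets (and uniformly over the rescaled problems $\tilde\psi_k$) is the delicate point, handled by adapting \cite[Section 4]{ACY}.
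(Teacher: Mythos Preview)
Your overall architecture matches the paper (divergence Borel--Cantelli for positive measure, then self-similarity to full measure via \cite[Proposition~1]{BDV ref}), but Step~2 has a real gap. If you expand $\int D_kD_{k'}\,d\mu$ via Fourier, isolate the Lebesgue overlap at frequency zero, and bound the rest through $\sum_{|\xi|\le R}|\hat\mu(\xi)|\ll R^{1-s_1}$, the non-zero-frequency error works out (after the convolution $\hat D_k*\hat D_{k'}$ is unwound) to roughly $2^{2(1-s_1)\min\{k,k'\}}$ at the scale $\delta_q\asymp 1/q$, whereas the target product $2^k\delta_{2^k}\cdot 2^{k'}\delta_{2^{k'}}$ is only $O(1/(\log k\cdot\log k'))$. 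Summed over $k,k'\le K$ with $2^K\sim Q$, the error is of order $Q^{2(1-s_1)}$ and swamps $(\sum_k 2^k\delta_{2^k})^2\asymp(\log Q/\log\log Q)^2$. The slack from $s_1>\tfrac12$ only buys you $\delta_q\gg q^{-\alpha}$ for some $\alpha<1$, and $\psi(q)=1/(q\log\log q)$ fails this for every such $\alpha$; so the bookkeeping cannot close. Your within-packet treatment via AD-regularity is also too crude at this scale: what is actually needed is the sharper level-set bound $\int_{\{f_Q\in[k,2k]\}}f_Q\,d\mu\ll 1/(k\log\log Q)$, which comes from applying GCP(1) at the coarser scale $Q/k$, not from AD-regularity alone.

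The paper sidesteps the obstruction by working with blocks $B_Q=\bigcup_{Q\le q<2Q}A(\delta_q,q,0)$ along a \emph{sparse} sequence $Q_j\asymp e^{j\lambda}$ satisfying $Q_{j+1}\ge Q_j^{3/\beta}$. The cross bound $\mu(B_{Q_1}\cap B_{Q_2})\ll Q_1\delta_{Q_1}\cdot Q_2\delta_{Q_2}$ is obtained \emph{geometrically} rather than spectrally: each constituent interval $I$ of $B_{Q_1}$ contains the support of a branch $\mu'$ with the same $\dim_{l^1}$, and the local counting property (Step~1 of the proof of Theorem~\ref{Jarnik2}) gives $\int_I f_{Q_2}\,d\mu\ll(\delta_{Q_1}/Q_1)^s\, Q_2\delta_{Q_2}$ \emph{provided} $Q_2\ge(Q_1/\delta_{Q_1})^{1/\beta}$; summing over $I$ yields the cross bound. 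Combined with the multiplicity estimate above one gets $\mu(B_Q)\gg Q\delta_Q$ and hence genuine quasi-independence along the sparse sequence; since $\sum_j\mu(B_{Q_j})\gg\sum_j1/\log j=\infty$, Lemma~\ref{Borel} applies. The sparsification --- trading full pairwise quasi-independence, which is unavailable, for a weaker condition that local counting can actually deliver --- is the idea missing from your proposal.
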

\begin{rem}
	It is in fact possible to show that the result holds for $\psi:q\to 1/(q (\log q)^\rho)$ for a number $\rho>0$ which is  effectively computable. We decide not to carry out the fully technical arguments because the number $\rho$ is strictly smaller than $1$ and we can not obtain a  fully Khinchine divergence result as in \cite{KL20}. 
\end{rem}
This result improves the results in \cite[Theorem 1.5]{EFS11} and  \cite[Theorem 1.2]{SW19} for the middle $15$th Cantor measure.   Part of our proof uses ideas from \cite{PV2005}. We will give a simple proof for the result with $\psi:q\to \epsilon/q$ and then  provide a slightly more complicated proof which holds for the approximation function $\psi(q)=1/(q\log\log q).$ 
\begin{proof}[Proof for $\psi:q\to \epsilon/q$]
	Let $\epsilon\in (0,1/10).$ First, by Theorem \ref{Lattice Counting},the discussions in Section \ref{LCST} and Example \ref{15th}, we saw that for $\delta_q=\epsilon/q,q\in\mathbb{N},$ there is a constant $c_\mu>0$ and
	\[
	\sum_{q=Q}^{2 Q} \mu(A(\delta_q,q,0))\geq  c_\mu Q\delta_Q
	\]
	for all large enough $Q.$ Of course, this is not quite saying that
	\[
	\mu(\cup_{q=Q}^{2Q} A(\delta_q,q,0))\gg Q\delta_Q.
	\]
	The reason is that the function
	\[
	f_Q=\sum_{q=Q}^{2Q} \chi_{A(\delta_q,q,0)}
	\]
	can attain very large values. Let $Q>10$ be an integer. Let $q,q'$ be two different integers in $[Q,2Q].$ Let $a,a'$ be integers. Suppose that $B(a/q,\epsilon/{q^2})\cap B(a'/q',\epsilon/{q'}^2)$ is not empty. Then we have
	\[
	\left|\frac{a}{q}-\frac{a'}{q'}\right|\leq 2\epsilon\frac{1}{Q^2}.
	\]
	On the other hand, if $a/q\neq a'/q',$ then
	\[
	\left|\frac{a}{q}-\frac{a'}{q'}\right|\geq \frac{1}{4Q^2}.
	\]
	Thus we must have $a/q=a'/q'$ if $B(a/q,\epsilon/{q^2})\cap B(a'/q',\epsilon/{q'}^2)\neq\emptyset.$ Suppose that for a number $\alpha\in [0,1]$ and integers $k',k>1,$ we have
	\[
	f_Q(\alpha)=\sum_{q=Q}^{2Q} \chi_{A(\delta_q,q,0)}(\alpha)=k'\in [k,2k]>1.
	\]
	Then there exist  integers $q_1,\dots,q_{k'}\in [Q,2Q]$ and $a_1,\dots,a_{k'}$ such that
	\[
	\alpha\in B(a_j/q_j,\epsilon/{q_j}^2)
	\]
	for all $j=1,\dots,k'.$ This implies that $a_j/q_j,i\in \{1,\dots,k'\}$ are actually equal, say, to $a_0/q_0$ with $\gcd(a_0,q_0)=1.$ As there are at least $k$ multiples of $q_0$ inside $[Q,2Q],$ we see that $Q\geq (k-1)q_0.$
	
	Now, we want to estimate
	\[
	\int_{f_Q\in [k,2k]} f_Q(\alpha)d\mu(\alpha).
	\]
	From the above arguments, we see that the set $\{f_Q\in [k,2k]\}$ is a union of intervals of length at most $\epsilon/Q^2$ centred at rational numbers with denominators at most $Q/(k-1).$ We claim that
	\[
	\int_{f_Q\in [k,2k]} f_Q(\alpha)d\mu(\alpha)\ll k\left(\frac{Q}{k}\right)^{2s}\left(\frac{\epsilon}{Q^2}\right)^{s}=\epsilon^s/k^{2s-1}.\tag{*}
	\]
	Here, $s=\Haus \mu.$ Indeed, we can list all rational numbers with denominators at most $Q/(k-1).$ There are at most $Q^2/(k-1)^2$ many of them and two different such rational numbers have separation at least $(k-1)^2/Q^2.$ Consider the disjoint union of intervals of length $(k-1)^2/Q^2$ centred at the above rational numbers. We see that among them, at most $\ll (Q/(k-1))^{2s}$ of them can intersect the support of $\mu.$ Now, we shrink those intervals to have length $2\epsilon/Q^2$ and keep their centres fixed.  Thus the total $\mu$ measure of those intervals is 
	\[
	\ll\left(\frac{Q}{k}\right)^{2s}\left(\frac{\epsilon}{Q^2}\right)^{s}.
	\]
	This concludes the claim (*). Here, it is possible to obtain a better estimate than (*) if we employ the GCP(1) for $\mu.$ We will do this in the second proof. Since $2s\geq 2\dim_{l^1}\mu>1,$ by (*), for each $c>0,$ one can fix a large enough $k=k_c$ which depends on $c,\mu,\epsilon$ such that
	\[
	\int_{f_Q\geq k} f_Q(\alpha)d\mu(\alpha)\leq c \epsilon.\tag{**}
	\]
	Now, we see that
	\[
	\mu(\cup_{q=Q}^{2Q} A(\delta_q,q,0))\geq \frac{1}{k}\int_{f_Q<k} f_Q(\alpha)d\mu(\alpha)= \frac{1}{k} \left(\int f_Qd\mu-\int_{f_Q\geq k}f_Qd\mu\right).
	\]
	Observe that
	\[
	\int f_Qd\mu=\sum_{q=Q}^{2Q} \mu(A(\delta_q,q,0)).
	\]
	Thus if $c$ is chosen to be small enough, we have for a number $c',$
	\[
	\mu(\cup_{q=Q}^{2Q} A(\delta_q,q,0))\geq c'\epsilon.
	\]
	for all large enough $Q.$
	This implies that $\limsup_{Q\to\infty} \cup_{q=Q}^{2Q} A(\delta_q,q,0)$ has $\mu$ measure at least $c'\epsilon>0.$ Thus the set $\limsup_{q\to\infty} A(\delta_q,q,0)$ has positive $\mu$ measure. We now upgrade this to full $\mu$ measure.
	
	We need to consider a localized version of (*). In order to do this, we need to study the implied constant in (*) more closely. Let $I\subset [0,1]$ be an interval. Let $m\geq 2$ be an integer. We decompose the unit interval into $m$ equal pieces of length $1/m.$ Let $N^I_m$ be the number of those small intervals that intersect $I\cap supp(\mu).$ We also write $N_m=N^{[0,1]}_m.$ For $r\in (0,1),$ let $\rho(r)$ be the maximal $\mu$-measure of an interval of length $r.$ Then we rewrite (*) as
	\[
	\int_{f_Q\in [k,2k]} f_Q(\alpha)d\mu(\alpha)\leq 2k N_{[Q^2/(k-1)^2]+1} \rho(2\epsilon/Q^2).
	\] 
	Let $l\geq 1$ be an integer. Let $I\subset [0,1]$ be an interval of length $15^{-l}$ such that it is the convex hull of  the support of a branch of $\mu.$ Let $\mu_I=\mu_{|I}/\mu(I).$ Then we see that
	\[
	\int_{f_Q\in [k,2k]} f_Q(\alpha)d\mu_I(\alpha)\leq 2k N^I_{[Q^2/(k-1)^2]+1} \rho(2\epsilon/Q^2)/\mu(I).
	\]
	Next, observe that as long as $Q$ is large enough, 
	\[
	N^I_{[Q^2/(k-1)^2]+1}\leq 240 (Q^2/(k-1)^2 |I|)^s.
	\]
	To see this, we first find the smallest integer $l'$ such that $[Q^2/(k-1)^2]+1\leq 15^{l'}.$ Then we study $N^{I}_{15^{l'}}.$ Suppose that $Q$ is so large that $l'>l.$ Then we have
	\[
	N^{I}_{15^{l'}}=14^{l'-l}=\mu(I) \times 14^{l'}.
	\]
	Here, we have used the fact that $\mu(I)=14^{-l}.$
	Next, for each interval of length $1/([Q^2/(k-1)^2]+1),$ it intersects at most $16$ many disjoint intervals of length $1/15^{l'}.$ From here, we see that
	\[
	N^I_{[Q^2/(k-1)^2]+1}\leq 15\times 16 \mu(I)\times 14^{l'}\leq 15\times 16|I|^s (Q^2/(k-1)^2)^s.
	\]
	Here, we used the fact that $\mu(I)=14^{-l}=|I|^s.$ Next, we consider $\rho(r).$ Clearly, there is a constant $C>0,$ such that $\rho(r)\leq C r^s.$ From here, we see that
	\begin{align*}
		&\int_{f_Q\in [k,2k]} f_Q(\alpha)d\mu_I(\alpha)\leq 2k N^I_{[Q^2/(k-1)^2]+1} \rho(2\epsilon/Q^2)/\mu(I)&\\
		&\leq 480C  k (Q^2/(k-1)^2)^{s} (2\epsilon/Q^2)^s.&\tag{*Loc}
	\end{align*}
	This is a localized version of (*). Let $I$ be the convex hull of a branch of $\mu.$ Consider $\mu_I.$ We  have
	\[
	\sum_{q=Q}^{2 Q} \mu_I(A(\delta_q,q,0))\geq c_{\mu_I}Q\delta_Q.
	\]
	for all large enough $Q.$ The constant $c_{\mu_I}$ does not depend on the interval $I.$ However, the exact form of the quantifier 'large enough' depends on the choice of $I$. This follows from the lower bound in Theorem \ref{Lattice Counting}, Section \ref{LCST} and the fact that $\hat{\mu_I}$ is a scaled version of  $\hat{\mu}.$ Next we use (*Loc) to see that the choice $k_c$ in (**) can be chosen in a uniform manner for all $I$ (convex hulls of branches of $\mu$). Thus, we see that (in what follows the number $c'$ is the same as appeared above)
	\[
	\mu_I(\cup_{q=Q}^{2Q} A(\delta_q,q,0))\geq c'\epsilon.
	\]
	for all large enough $Q$ (in a manner that depends on $I$). This shows that 
	\[
	\mu_I(\limsup_{Q\to\infty} A(\delta_q,q,0))\geq c'\epsilon>0.
	\]
	for all $I$ which is the convex hull of a branch of $\mu.$ Then by \cite[Proposition 1]{BDV ref}, we see that
	\[
	\mu(\limsup_{Q\to\infty} A(\delta_q,q,0))=1.
	\]
	This finishes the proof.
\end{proof}

\begin{proof}[Proof for $\psi:q\to 1/(q\log\log q)$]
	Let $\delta_q=1/(q\log\log q).$ Let $Q>1$ be an integer and we denote 
	\[
	B_Q=\cup_{q=Q}^{2Q} A(\delta_q,q,0).
	\]
	Let $Q_1,Q_2$ be integers. We want to establish the quasi-independence between $f_{Q_1}, f_{Q_2}.$ That is, we want to show that
	\[
	\int f_{Q_1}(x)f_{Q_2}(x)d\mu(x)\ll \int f_{Q_1}(x)d\mu(x)\int f_{Q_2}(x)d\mu(x)
	\] 
	under some conditions on $Q_1,Q_2.$ Suppose that $Q_1<Q_2.$ Then $B_{Q_1}$ is a  union of $\asymp \delta_{Q_1}/{Q_1}$-intervals. As $\mu$ has GCP(1), we see that $\ll \delta_Q/(\delta_Q/Q)^s$ many of them intersect $supp(\mu).$ Let us take $I$ to be one of them. It is possible to find a branch $\mu'$ of $\mu$ such that $supp(\mu')\subset 2I$ and the convex hull of $supp(\mu)$ has length $\gg\delta_{Q_1}/Q_1.$ Then $\mu'$ has GCP(1) and we can estimate $\int f_{Q_2}(x)d\mu'(x).$ We have indeed performed this type of argument in the first two steps of the proof of Theorem \ref{Jarnik2}. In fact, we see that there is number $\beta\in (0,1)$ such that as long as $Q_2\geq (Q_1/\delta_{Q_1})^{1/\beta},$ we have
	\[
	\int f_{Q_2}(x)d\mu'(x)\ll Q_2\delta_{Q_2},
	\]
	where the implied constant depends only on $\mu.$ Our condition on the support of $\mu'$ implies that
	\[
	\int_I f_{Q_2}(x)d\mu(x)\asymp\mu(I)\int_I f_{Q_2}(x)d\mu'(x)\ll (\delta_{Q_1}/Q_1)^s Q_2\delta_{Q_2},
	\]
	where the implied constant depends only on $\mu.$ Thus, we can sum up the above inequality for all intervals $I$ forming $B_{Q_1}$ to see that
	\begin{align*}
		\int f_{Q_1}(x)f_{Q_2}(x)d\mu(x)\ll (Q_1\delta_{Q_1}/(\delta_{Q_1}/{Q_1})^s)(\delta_{Q_1}/Q_1)^s Q_2\delta_{Q_2}=Q_1\delta_{Q_1} Q_2\delta_{Q_2}\\\asymp \int f_{Q_1}(x)d\mu(x)\int f_{Q_2}(x)d\mu(x).
	\end{align*}
	From here we see that as long as $Q_2\geq (Q_1/\delta_{Q_1})^{1/\beta},$
	\[
	\mu(B_{Q_1}\cap B_{Q_2})\ll Q_1\delta_{Q_1} Q_2\delta_{Q_2}.
	\]
	This is not quite achieving the quasi-independence for $B_{Q_1}, B_{Q_2}$ as we have only the following upper bound. 
	\[
	\mu(B_Q)\leq\int f_Q(x)d\mu(x)\ll Q_1\delta_{Q_1}.
	\]
	In order to estimate $\mu(B_{Q}),$ we use the idea for obtaining the inequality (*) in the first proof above (with $\epsilon=1/\log\log Q$). The set $f_Q\in [k,2k]$ is a union of intervals of length at most $1/(Q^2\log\log Q)$ centred at rational numbers with denominators at most $Q/(k-1)$. Since $\mu$ has GCP(1), we see that \[\int_{\{f_Q\in [k,2k]\}} f_Qd\mu\ll \frac{1}{k\log\log Q}\tag{***}\]
	as long as $k\leq Q^{1-c}$ for a constant $c>0$ which depends on $\mu.$ To see this, just observe we have the following estimate
	\[
	\sum_{q\leq Q/({k-1})} \mu(A(q/(Q^2\log\log Q),q,0))\ll \frac{1}{k^2\log\log Q}.\tag{****}
	\]
	This can be done by applying Theorem \ref{Lattice Counting} together with the argument in Section \ref{LCST}. We provide more details. Let $\delta'_{q}=q/(Q^2\log\log Q).$ Consider the function
	\[
	g=\sum_{q=1}^{Q/(k-1)} \chi_{A(\delta'_q,q,0)}.
	\]
	We can modify this function by replacing the character functions with smooth functions just as in the proof of Theorem \ref{Lattice Counting}. We see that it is possible to show that
	\[
	\int gd\mu\ll \int_{[0,1]} g(x)dx+\sum_{|\xi|\leq 4Q^2\log\log Q,\xi\neq 0} |\hat{\mu}(\xi)\hat{g}(\xi)|.
	\]
	The first term can be easily computed and we see that 
	\[
	\int_{[0,1]} g(x)dx\ll \frac{Q^2}{k^2}\frac{1}{Q^2\log\log Q}= \frac{1}{k^2\log\log Q}.
	\]
	For the second term, observe that for $\xi\neq 0,$ we have \[|\hat{g}(\xi)|\leq \sum_{q|\xi,q\leq Q/(k-1)} \delta'_q\leq d(|\xi|)/(kQ\log\log Q).\] We have used the fact that
	\[
	\sum_{q||\xi|,q\leq Q/(k-1)}q\ll \frac{Q}{k} d(|\xi|).
	\]
	We see that ($\dim_{l^1}\mu>1/2$)
	\[
	\sum_{|\xi|\leq 4Q^2\log\log Q}|\hat{\mu}(\xi)|\ll Q^{\rho}
	\]	
	where $\rho<1$ is a constant. Therefore we have
	\[
	\sum_{|\xi|\leq 4Q^2\log\log Q,\xi\neq 0} |\hat{\mu}(\xi)\hat{g}(\xi)|\ll 1/Q^{\rho'}
	\]
	for another constant $\rho'\in (0,1).$ This proves the estimate (****) (and then (***)) for $k\leq Q^{1-c}$ with a suitable constant $c>0$ which can be obtained from $\rho'.$  From (***), we see that for $k\leq Q^{1-c},$
	\[\int_{\{f_Q\geq k\}} f_Qd\mu=\sum_{j\geq 0}\int_{\{f_Q\in [2^j k,2^{j+1}k]\}} f_Qd\mu\ll \frac{1}{k\log\log Q}+\frac{\log Q}{(\log\log Q)^s}\frac{1}{Q^{(2s-1)(1-c)}}.\tag{*****}\]
	For the rightmost inequality, we split the sum of $j$ according to whether $2^j\leq Q^{1-c}$ or not. We apply (***)  for the former case. For the sum with $2^j k\geq Q^{1-c},$ we use (*) from the first proof to see that
	\[
	\int_{\{f_Q\in [2^j k,2^{j+1}k]\}} f_Qd\mu\ll 2^j k \left(\frac{Q}{2^j k}\right)^{2s}\left(\frac{1}{Q^2\log\log Q}\right)^s\ll \frac{1}{(\log\log Q)^s} \left(\frac{1}{Q^{1-c}}\right)^{2s-1}. 
	\]
	Observe that the sum on $j$ is a finite sum because $f_Q(x)\ll Q$ with a uniform implied constant for all $x.$ Thus there are $\ll \log Q$ many summands. From here we see that
	\[
	\sum_{j:2^jk\geq Q^{1-c}}\int_{\{f_Q\in [2^j k,2^{j+1}k]\}} f_Qd\mu\ll \frac{\log Q}{(\log\log Q)^s} \left(\frac{1}{Q^{1-c}}\right)^{2s-1}. 
	\]
	Now (*****) follows. Of course the fact that $2s>1$ is crucial in the above arguments. Now we see that for $k\leq Q^{(2s-1)(1-c/2)}$ we have
	\[
	\int_{\{f_Q\geq k\}} f_Qd\mu\ll \frac{1}{k\log\log Q}.\tag{***'}
	\]
	This is an improvement of (***).  From here, we see that
	\[
	\int_{\{f_Q\geq k\}} f_Qd\mu\ \frac{1}{k\log\log Q}=\frac{Q\delta_Q}{k}.
	\]
	By fixing $k$ to be a large enough integer, we can achieve that
	\[
	\mu(B_Q)\geq \frac{1}{c'k} Q\delta_{Q}
	\]
	for a suitable constant $c'>0.$ From here we see that
	\[
	\mu(B_{Q_1}\cap B_{Q_2})\ll \mu(B_{Q_1})\mu(B_{Q_2})
	\]
	as long as $Q_2\geq Q^{3/\beta}_1.$ We can pick a sequence of integers $Q_1<Q_2<Q_3\dots$ in a way that
	\[
	Q^{3/\beta}_{j}\geq Q_{j+1}\geq Q^{3/\beta}_{j}.
	\] 
	More precisely, we can choose $Q_j=[e^{j\lambda}]$ for a suitable $\lambda>1.$ Then we see that
	\[
	\sum_{j} \mu(B_{Q_j})\gg \sum_{j} \frac{1}{\log j}=\infty.
	\]
	Thus by divergence Borel-Cantelli lemma we see that $\mu(\limsup_{j\to\infty} B_{Q_j})>0.$ To upgrade this result to a full measure version, we can perform the above argument for each branch of $\mu.$ It is already possible to see that for each branch $\mu'$ of $\mu,$ we have $\mu'(\limsup_{j\to\infty} B_{Q_j})>0.$ In order achieve a uniform estimate, one needs to take care of the implied constants appeared in the above $\ll,\gg,\asymp$ symbols. This can be done in the same way as in the first proof. We omit the details. As a result, it is possible to see that there is a number $c''>0$ such that
	\[
	\mu'(\limsup_{j\to\infty} B_{Q_j})>c''
	\]
	for all branch $\mu'$ of $\mu.$ From here we conclude the proof by using \cite[Proposition 1]{BDV ref}.
\end{proof}
\section{Acknowledgement}
HY was financially supported by the University of Cambridge and the Corpus Christi College, Cambridge. HY has received funding from the European Research Council (ERC) under the European Union’s Horizon 2020 research and innovation programme (grant agreement No. 803711). HY thank O. Khalil and M. L\"{u}thi for kindly providing us the preprint \cite{KL20}, and P. Varj\'{u} for many helpful comments.

\bibliographystyle{amsplain}

\end{document}